\documentclass[12pt,reqno,a4paper]{amsart}
\usepackage{latexsym, amsmath, amssymb,amsthm,amsopn,amsfonts}
\usepackage{version}
\usepackage{epsfig,graphics,color,graphicx,graphpap}
\usepackage{amssymb}

\usepackage[left=2.5cm,right=2.5cm,top=3cm,bottom=3cm]{geometry}


\newcommand{\ssi}{\scriptscriptstyle}
\newcommand{\iss}[1]{{\scriptscriptstyle (#1)}}

\numberwithin{equation}{section}
\newtheorem{theorem}{Theorem}[section]

\newtheorem{lemma}[theorem]{Lemma}
\newtheorem{definition}{Definition}[section]

\newcommand{\R}{\mathbb R}

\author[Yi-Hsuan Lin]{Yi-Hsuan Lin}
\address{Institute for Advanced Study, The Hong Kong University Science and Technology, Clear Water Bay, Kowloon, Hong Kong}
\curraddr{}
\email{yihsuanlin3@gmail.com}

\author[Shixu Meng]{Shixu Meng}
\address{Department of Mathematics, University of Michigan, MI 48109 USA}
\curraddr{}
\email{shixumen@umich.edu}

\begin{document}

\title[Homogenization problem in elastic scattering ]{Leading and second order  Homogenization of an elastic scattering problem for highly oscillating anisotropic medium}

\maketitle

\begin{abstract}

We consider the scattering of elastic waves by highly oscillating anisotropic periodic media with bounded support. Applying the two-scale homogenization, we first obtain a constant coefficient second-order partial differential elliptic equation that describes the wave propagation of the effective or overall wave field. We study the rate of convergence by introducing complimentary boundary correctors. To account for dispersion induced by the periodic structure, we further pursue a higher-order homogenization. We then investigate the rate of convergence and formally obtain a fourth-order differential equation that demonstrates the anisotropic dispersion.

	\medskip

\noindent{\bf Keywords}: second-order homogenization, elastic scattering, wave dispersion, periodic media, two-scale homogenization. 

\noindent{\bf Mathematics Subject Classification (2010)}: 35B27, 74J20
\end{abstract}

\tableofcontents

\section{Introduction and summary of results}\label{Section 1}

\subsection{Motivation and background}
The wave propagation in periodic media is of great interest in cloaking, sub-wavelength imaging, and noise control, thanks to the underpinning phenomena of frequency-dependent anisotropy and band gaps~\cite{maldovan2013sound,milton2006cloaking,zhu2011holey}. Away from averaging techniques, the effective wave motion can also be obtained using the two-scale method \cite{alain2011asymptotic} with a perturbation parameter that signifies the ratio between the unit cell of periodicity and wavelength. In the regime of long-wavelength and low-frequency, the leading-order homogenization in particular gives the quasi-static model by a second-order partial differential equation, where the elastic tensor and density are replaced by constant effective elastic tensor and density respectively \cite{alain2011asymptotic, shen2017lectures}. To gain further understanding of the wave dispersion, a higher-order homogenization has been taken into account. A higher-order homogenization was derived for the scalar wave equation in \cite{chen2001dispersive, meng2017dynamic, wautier2015second}, where a fourth-order partial differential equation was formally derived and the dispersive effect was hence demonstrated. \cite{capdeville2007second} considered a higher-order homogenization of the elastic wave for non-periodic layered media that transcends the usual quasi-staic regime. Alternatively a dispersive model for scalar wave equation was derived using Floquet-Bloch theory and higher-order asymptotic of the Bloch variety \cite{santosa1991dispersive}.  In the case that the periodic structure was only supported in a bounded domain, contrary to the case that the periodic structure occupies $\R^d$ for $d=2,3$, the boundary correctors played a role both in the leading order and second order homogenization \cite{cakoni2016homogenization}. The higher-order homogenization in particular sheds light on  sensing the microstructure through dispersion \cite{lambert2015bridging}.  The time-harmonic elastic wave equation luckily can be handled by the homogenization theory for second-order elliptic systems. 

The homogenization problem for the second-order elliptic equations
or systems with periodic coefficients has been well studied in the literature, for example, see \cite{avellaneda1987compactness,avellaneda1989compactness,alain2011asymptotic,cioranescu2000introduction,kenig2012convergence,kenig2014periodic,pavliotis2008multiscale,shen2017lectures}.
Concerning about the regularity estimates, the authors in  \cite{avellaneda1987compactness,avellaneda1989compactness}
introduced a famous three-steps compactness method to prove the H\"older estimates for solutions of divergence and non-divergence elliptic
systems. Furthermore, in \cite{avellaneda1987compactness}, the authors studied the Green function and the Poisson kernel to establish $L^{p}$ theory of the elliptic homogenization problem. The authors in \cite{kenig2014periodic} investigated the asymptotic behaviour of the Green
and Neumann functions and derived optimal convergence rates in $L^{p}$
and $W^{1,p}$ for solutions with Dirichlet or Neumann boundary conditions.
They further studied the convergence
rates in $L^{2}$ of solutions of the elliptic systems in Lipschitz
domains in \cite{kenig2012convergence}. We refer to \cite{shen2017lectures} for an excellent lecture note for the survey on this research area. 

\subsection{The elastic scattering problem}
Let $D\subset\mathbb{R}^{d}$ be a bounded simply connected domain
with $C^{\infty}$-smooth boundary $\partial D$ for $d=2,3$. Let
$\epsilon>0$ be a small parameter and $Y:=[0,1]^{d}$ be the unit
cell. Let $C=C(y)$ be an anisotropic elastic fourth-order tensor with $C=(C_{ijk\ell})_{1\leq i,j,k,\ell \leq d}$. 
In this paper, we assume that the elastic tensor $C=C(y)$ satisfies the following conditions.

\begin{itemize}
\item  Periodicity: The elastic tensor $C=C(y)$ is  $Y$-periodic,  
\[
C(y+z)=C(y),\mbox{ for any }y\in\mathbb{R}^{d}\mbox{ and }z\in\mathbb{Z}^{d}.
\]

\item  Strong convexity:
\begin{align}
\label{strong convexity}\sum_{i,j,k,\ell=1}^{d}C_{ijk\ell}(y)a_{ij}a_{k\ell}\geq c_{0}\sum_{i,j=1}^{d}a_{ij}^{2},\text{ for any }y\in\mathbb R^d,
\end{align}
with some constant $c_{0}>0$ and for any constant symmetric matrix
$(a_{ij})_{1\leq i,j\leq d}$.

\item  Smoothness: $C(y)\in C^{\infty}(\mathbb R^d)$. 

\item  Symmetry:
The elastic tensor $C=(C_{ijk\ell})_{1\leq i,j,k,\ell \leq d}$ satisfies major and minor symmetric condition, that is, 
\begin{align}\label{Symmetry condition}
C_{ijk\ell }=C_{k\ell ij}\text{ and }C_{ijk\ell}=C_{ij\ell k},\text{ for all }1\leq i,j,k,\ell \leq d.
\end{align} 

\end{itemize}

The symmetric property of the elastic tensor $C=C(y)$ plays an important role in the study of the asymptotic analysis of the scattering homogenization problem (see Section \ref{Section 2} and Section \ref{Section 3}).
Next, let $\rho =\rho(y)\in C^{\infty}(\mathbb R^d)$ 
be the density of the medium and  $\omega\in\mathbb{R}$ be the interrogating
frequency. We also assume  $\rho(y)$ is $Y$-periodic, i.e., $\rho(y+z)=\rho(y)$ for any $y\in \mathbb R^d$ and $z\in \mathbb Z^d$. 
In the exterior domain $\mathbb{R}^{d}\backslash\overline{D}$, the medium is homogeneous, isotropic where $\rho=1$ and the elastic tensor is a constant fourth-order tensor $C^{\iss 0}$ given by
\begin{equation} 
\label{constant elastic tensor}C^{\iss 0}_{ijk\ell }=\lambda\delta_{ij}\delta_{k\ell}+\mu (\delta_{ik}\delta_{j\ell}+\delta_{i\ell}\delta_{jk}),
\end{equation}
where $\lambda$ and $\mu$ are  Lam\'e constants satisfying
the strong convexity condition \eqref{strong convexity}, and it is equivalent to
\[
\mu>0\quad \mbox{ and }\quad d\lambda+2\mu>0\quad \text{ where }\quad d=2,3.
\]

Now let us consider the elastic scattering by highly oscillating periodic media with bounded support. Let 
$u(x)=(u_{\ell}(x))_{\ell=1}^{d}$ be the displacement vector field, the time-harmonic elastic scattering is modeled by 
\begin{equation}
\begin{cases}
\nabla\cdot\left(C(\dfrac{x}{\epsilon})\nabla u\right)+\omega^{2}\rho(\dfrac{x}{\epsilon})u=0 & \mbox{ in }D,\\
\Delta^{*}u^{s}+\omega^{2}u^{s}=0 & \mbox{ in }\mathbb{R}^{d}\backslash\overline{D},\\
u^{s}+u^{i}=u & \mbox{ on }\partial D,\\
T_{\nu}(u^{s}+u^{i})=(C(\dfrac{x}{\epsilon})\nabla u)\cdot\nu & \mbox{ on }\partial D,
\end{cases}\label{Model of homogenization equation}
\end{equation}
where 
\begin{align}\notag
\begin{cases}
\left(\nabla\cdot(C(\dfrac{x}{\epsilon})\nabla u)\right)_{j}=\sum_{i,k,\ell=1}^{d}\dfrac{\partial}{\partial x_{i}}\left(C_{ijk\ell}(\dfrac{x}{\epsilon})\dfrac{\partial u_{\ell}}{\partial x_{k}}\right)\text{ for }1\leq j\leq d,\\
\Delta^{*}=\mu\Delta+(\lambda+\mu)\nabla(\nabla \cdot)=\nabla \cdot (C^{\iss 0}\nabla \cdot ),
\end{cases}
\end{align}
and $\nu$ is the unit outward normal on $\partial D$, $u^{s}$ is the
scattered field, and $u^{in}$ is an incident field; the operator $T_{\nu}$ stands for the \textit{boundary
traction operator} of the isotropic elasticity system (from the exterior
domain), which is 
\begin{align} \label{boundary traction operator}
T_{\nu}u=\begin{cases}
2\mu\dfrac{\partial u}{\partial\nu}+\lambda\nu\nabla\cdot u+\mu\nu^{T}(\partial_{2}u_{1}-\partial_{1}u_{2}), & \mbox{ when }n=2,\\
2\mu\dfrac{\partial u}{\partial\nu}+\lambda\nu\nabla\cdot u+\mu\nu\times(\nabla\times u), & \mbox{ when }n=3.
\end{cases}
\end{align}
The equation $\Delta^{*}u^{s}+\omega^{2}u^{s}=0$ with constant coefficients is called the Navier's
equation. In particular we consider an incident field that is either a plane shear wave 
\[
u^{in}=u_{s}^{in}:=d^\perp\exp(i\omega_{s}x\cdot d),
\]
where $d, d^\perp$ are orthonormal vectors in $\mathbb{R}^{d}$, or a plane pressure wave 
\[
u^{in}=u_{p}^{in}:=d\exp(i\omega_{p}x\cdot d),
\]
where $\omega_s$ and $\omega_p$ are denoted by $\omega_{s}=\dfrac{\omega}{\sqrt{\mu}}$
and $\omega_{p}=\dfrac{\omega}{\sqrt{\lambda+2\mu}}$.

Via the well-known Helmoholtz decomposition in $\mathbb{R}^{d}\backslash\overline{D}$,
one can see that the scattered field can be decomposed as
\[
u^{s}=u_{p}^{sc}+u_{s}^{sc},
\]
with 
\[
u_{p}^{sc}=-\dfrac{1}{\omega_{p}^{2}}\nabla(\nabla \cdot u^{sc})\mbox{ and }u_{s}^{sc}=\dfrac{1}{\omega_{s}^{2}}\mbox{rot}(\mbox{rot}u^{sc}),
\]
where $\mbox{rot}=\nabla^{T}$ represents $\frac{\pi}{2}$ clockwise rotation of the gradient if  $d=2$ and $\mbox{rot}=\nabla\times$ stands for the curl operator if $d=3$. The vector functions $u_{p}^{sc}$ and $u_{s}^{sc}$ are
called the pressure (longitudinal) and shear (transversal) parts of
the scattered vector field $u^{s}$, respectively and they satisfy
the Helmholtz equation 
\[
(\Delta+\omega_{p}^{2})u_{p}^{sc}=0\mbox{ and }\mbox{rot}u_{p}^{sc}=0\mbox{ in }\mathbb{R}^{d}\backslash\overline{D},
\]
\[
(\Delta+\omega_{s}^{2})u_{s}^{sc}=0\mbox{ and }\nabla \cdot u_{s}^{sc}=0\mbox{ in }\mathbb{R}^{d}\backslash\overline{D}.
\]
Furthermore, for the elastic scattering problem, the scattered field $u^{s}$ satisfies the \textit{Kupradze radiation condition} 
\begin{equation}
\lim_{r\to\infty}\left(\dfrac{\partial u_{p}^{sc}}{\partial r}-i\omega_{p}u_{p}^{sc}\right)=0\mbox{ and }\lim_{r\to\infty}\left(\dfrac{\partial u_{s}^{sc}}{\partial r}-i\omega_{s}u_{s}^{sc}\right)=0,\quad r=|x|,\label{Kupradze radiation condition}
\end{equation}
uniformly in all directions $\widehat{x}=\dfrac{x}{|x|}$. 

In summary the elastic scattering by highly oscillating periodic media can be formulated as:  find the solution $u^{\epsilon}\in H^{1}_{loc}(\mathbb{R}^{d})$ to
\begin{equation}
\begin{cases}
\nabla\cdot\left(C(\dfrac{x}{\epsilon})\nabla u^{\epsilon}\right)+\omega^{2}\rho(\dfrac{x}{\epsilon})u^{\epsilon}=0 & \mbox{ in }D,\\
\Delta^{*}u^{\epsilon}+\omega^{2}u^{\epsilon}=0 & \mbox{ in }\mathbb{R}^{d}\backslash\overline{D},\\
(u^{\epsilon})^+-(u^{\epsilon})^-=f & \mbox{ on }\partial D,\\
(T_{\nu}u^{\epsilon})^{+}-\left(C(\dfrac{x}{\epsilon})\nabla u^{\epsilon}\right)^{-}\cdot\nu=g & \mbox{ on }\partial D,
\end{cases}\label{Homogenization Equation}
\end{equation}
where $u^{\epsilon}$ satisfies the Kupradze radiation condition \eqref{Kupradze radiation condition}
at infinity. Here $u^\epsilon$ stands for the solution parameterized by $\epsilon$ and 
\[
f:=-u^{in} \quad \mbox{ and } \quad g:=-T_{\nu}u^{in} \quad \mbox{ on } \quad \partial D,
\]
where $u^{in}$ is either a shear wave or a pressure wave given as before. The superscripts $"+"$ or $"-"$ stand for the limit from exterior
or interior on $\partial D$, respectively. We remark that the highly oscillating periodic media is only supported in $D$. 

\subsection{Main results and outline}
We are interested in the limit behavior or the overall behavior of the solution $u^\epsilon$ as $\epsilon\to0$, known as \textit{homogenization}.
As $\epsilon\to0$, we are expecting that $u^{\epsilon}\to u^{\scriptscriptstyle (0)}$,
where $u^{\scriptscriptstyle (0)}$ is the solution of the homogenized equation 
\begin{equation}
\begin{cases}
\nabla\cdot(\overline{C}\nabla u^{\scriptscriptstyle (0)})+\omega^{2}\overline{\rho}u^{\scriptscriptstyle (0)}=0 & \mbox{ in }D,\\
\Delta^{*}u^{\scriptscriptstyle(0)}+\omega^{2}u^{\scriptscriptstyle (0)}=0 & \mbox{ in }\mathbb{R}^{d}\backslash\overline{D},\\
(u^{\ssi (0)})^+-(u^{\scriptscriptstyle (0)})^-=f & \mbox{ on }\partial D,\\
(T_{\nu}u^{\scriptscriptstyle (0) })^{+}-(\overline{C}\nabla u^{\scriptscriptstyle (0)})^{-}\cdot\nu=g & \mbox{ on }\partial D,
\end{cases}\label{0- Homogenized Equation}
\end{equation}
where $\overline{C}=(\overline{C}_{ijk\ell})$ is the constant four tensor
and $\overline{\rho}$ is the constant density given by 
\[
\begin{cases}
\overline{C}_{ijk\ell}=\int_{Y}\left(C_{ijk\ell}-C_{ijmn}\dfrac{\partial}{\partial y_{m}} \chi_{nk\ell}\right)dy,\\
\overline{\rho}=\int_{Y}\rho\, dy,
\end{cases}
\]
and here we have utilized the Einstein summation convention for the repeated indices; the constant fourth-order tensor $\overline{C}$ is called the effective
fourth-order tensor; the third-order tensor $\chi_{nk\ell}\in H_{per}^{1}(Y)$
is uniquely determined by the cell problem
\begin{equation}
\begin{cases}
\dfrac{\partial}{\partial y_{i}}\left(C_{ijmn}-C_{ijk\ell}\dfrac{\partial}{\partial y_{k}} \chi_{\ell m n}\right)=0\mbox{ in } Y,\\
\int_{Y}\chi_{\ell mn}(y)\,dy=0,
\end{cases}\label{Cell problem}
\end{equation}
where we refer to Appendix Section \ref{Section 5} for detailed analysis.
$H_{per}^1(Y)$ denotes the periodic Sobolev space consists of $H^1$ functions defined on the $d$-dimensional torus $\R^d / \mathbb{Z}^d$ where $Y=[0,1]^d$, we refer readers to \cite[Chapter 3]{cioranescu2000introduction} for detailed characterizations. 

The constant tensor $\overline{C}$ also satisfies the strong convexity condition \eqref{strong convexity} (see Appendix Section \ref{Section 5}), which implies that \eqref{0- Homogenized Equation} is a well-posed transmission problem. Note that from the standard elliptic regularity theory (see \cite{mclean2000strongly} for instance),  the corrector $\chi$ is $C^\infty$-smooth due to the smoothness of $C$. 

Now let us  look for ansatz  $u^\epsilon$ 
\begin{align*}
u^\epsilon =u^{\ssi (0)}+\epsilon u^{\ssi(1)} +\epsilon ^2 u^{\ssi(2)} +\cdots
\end{align*} 
as an asymptotic expansion in terms of $\epsilon$, where the functions $u^{\ssi (j)}$ will be characterized in the following sections for $j=0,1,2,\cdots$.
Now, we can state our main results in this paper. 
\begin{enumerate}
\item[(a)] $H^1$ convergence and $L^2$ convergence: the first result is the convergent rates between solutions $u^{\epsilon}$ and $u^{\ssi (0)}$.

\begin{theorem}[Convergence in $L^2$ and $H^1$]\label{MAIN THEOREM}
Let $u^\epsilon$ and $u^{\iss 0}$ be the solutions of \eqref{Homogenization Equation} and \eqref{0- Homogenized Equation}, respectively. Let $u^{\iss 1}$ be the bulk corrector given by \eqref{bulk correction} in $D$ with $u^{\iss 1}=0$ in $\mathbb R^d\setminus \overline{D}$. Then for any ball $B_R$ with $D \subset B_R$, we have
\begin{equation}\label{L^2 rates convergence 1}
\|u^\epsilon -u^{\iss 0}-\epsilon u^{\iss 1} \|_{H^1(D)}+\|u^\epsilon - u^{\iss 0}\|_{H^1(B_R\setminus \overline{D})}\leq C_R\epsilon ^{1/2}\|u^{\iss 0}\|_{H^2(D)},
\end{equation}
and  
\begin{equation}\label{L^2 rates convergence}
	\|u^\epsilon -u^{\ssi (0)}\|_{L^2(B_R)}\leq C_R\epsilon \|u^{\ssi(0)}\|_{H^2(D)},
\end{equation}
for some constant $C_R>0$ independent of $\epsilon$.

\end{theorem}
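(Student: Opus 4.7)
My approach is the classical two-scale energy method adapted to a transmission scattering problem, following the template in \cite{shen2017lectures,cakoni2016homogenization}. Let $\chi_{nk\ell}$ be the cell correctors from \eqref{Cell problem}, and define the bulk corrector on $D$ by $u^{\iss 1}_j(x) = -\chi_{jk\ell}(x/\epsilon)\,\partial_k u^{\iss 0}_\ell(x)$ (this is what \eqref{bulk correction} will say). The natural object to estimate is
\[
w^\epsilon(x) := u^\epsilon(x) - u^{\iss 0}(x) - \epsilon\, u^{\iss 1}(x)\,\eta_\epsilon(x)\quad \text{in } D,\qquad w^\epsilon(x):=u^\epsilon(x)-u^{\iss 0}(x)\quad \text{in }\R^d\setminus\overline D,
\]
where $\eta_\epsilon\in C_c^\infty(D)$ is a smooth cutoff equal to $1$ outside an $\epsilon$-neighbourhood of $\partial D$ with $|\nabla\eta_\epsilon|\lesssim \epsilon^{-1}$, supported away from $\partial D$ by $O(\epsilon)$. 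The role of $\eta_\epsilon$ is to kill the trace jumps that $\epsilon u^{\iss 1}$ would otherwise create on $\partial D$, at the cost of a standard boundary-layer residual.

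\textbf{Step 1: residual equation in $D$.} Plug the ansatz into the operator $L_\epsilon v := \nabla\cdot(C(x/\epsilon)\nabla v)+\omega^2\rho(x/\epsilon)v$. Using $C(x/\epsilon)$ expanded with the chain rule and the cell problem \eqref{Cell problem} (the $\epsilon^{-1}$ terms cancel exactly thanks to the definition of $\overline{C}$ and $\chi$), the residue $L_\epsilon w^\epsilon$ in $D$ reduces to an expression of the form
\[
L_\epsilon w^\epsilon \;=\; \epsilon\, F_\epsilon(x,\nabla u^{\iss 0},\nabla^2 u^{\iss 0}) \;+\; (\text{commutator with }\eta_\epsilon) \;+\; \omega^2\bigl(\rho(x/\epsilon)-\overline\rho\bigr)u^{\iss 0},
\]
where the last term is estimated by a standard ``oscillating coefficient minus its mean'' argument (writing it as $\epsilon\nabla\cdot N(x/\epsilon)\,u^{\iss 0}$ for a bounded flux corrector $N$). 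The commutator with $\eta_\epsilon$ is supported in an $\epsilon$-thick layer near $\partial D$; its $L^2$ norm is $O(\epsilon^{1/2})\|u^{\iss 0}\|_{H^2(D)}$ by the trace/Poincar\'e-type inequality in the boundary strip (this is where the square root comes from).

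\textbf{Step 2: transmission jumps and global energy estimate.} Because $\eta_\epsilon$ vanishes near $\partial D$, the Dirichlet and traction jumps of $w^\epsilon$ across $\partial D$ are those of $u^\epsilon - u^{\iss 0}$, which are both zero by the transmission conditions in \eqref{Homogenization Equation} and \eqref{0- Homogenized Equation}. Thus $w^\epsilon$ solves an \emph{inhomogeneous} transmission scattering problem: $L_\epsilon w^\epsilon=(\text{RHS}_D)$ in $D$, $\Delta^* w^\epsilon+\omega^2 w^\epsilon=0$ in $\R^d\setminus\overline D$, zero jump of displacement and traction across $\partial D$, and the Kupradze radiation condition at infinity. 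The well-posedness of this problem (uniformly in $\epsilon$) follows from the Fredholm theory for the problem \eqref{Homogenization Equation}, combined with the strong convexity of $C$ and the Dirichlet-to-Neumann reduction to a ball $B_R$. The resulting a priori bound
\[
\|w^\epsilon\|_{H^1(D)}+\|w^\epsilon\|_{H^1(B_R\setminus\overline D)}\;\le\; C_R\bigl(\|\text{RHS}_D\|_{H^{-1}(D)}+\text{jumps}\bigr)
\]
applied to the residue of Step 1 yields \eqref{L^2 rates convergence 1}. Then, since $\epsilon u^{\iss 1}(1-\eta_\epsilon)$ has $H^1(D)$ norm $O(\epsilon^{1/2})\|u^{\iss 0}\|_{H^2}$ and $\|\epsilon u^{\iss 1}\|_{H^1(D)}=O(\epsilon^{1/2})$ after absorbing the boundary layer, one concludes \eqref{L^2 rates convergence 1}.

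\textbf{Step 3: $L^2$ estimate by duality.} To upgrade from $\epsilon^{1/2}$ in $H^1$ to $\epsilon$ in $L^2$, I would use an Aubin--Nitsche argument. For given $\varphi\in L^2(B_R)$, let $\Phi$ solve the adjoint homogenized transmission scattering problem with source $\varphi$; by elliptic regularity and the radiation estimate $\|\Phi\|_{H^2(D)}+\|\Phi\|_{H^2(B_R\setminus\overline D)}\le C\|\varphi\|_{L^2}$. Pairing $u^\epsilon-u^{\iss 0}$ against $\varphi$, integrating by parts, substituting the two-scale ansatz both for $u^\epsilon$ and for $\Phi$, and using the cell problem twice, I expect the leading oscillatory terms to cancel, leaving an $O(\epsilon)\|\varphi\|_{L^2}\|u^{\iss 0}\|_{H^2}$ residue. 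Taking the supremum over $\|\varphi\|_{L^2}\le 1$ yields \eqref{L^2 rates convergence}.

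\textbf{Main obstacle.} The genuine difficulty is Step 2: handling the nonstandard transmission conditions together with the Kupradze radiation condition uniformly in $\epsilon$, since the usual homogenization estimates assume either pure Dirichlet/Neumann data on a bounded Lipschitz domain or a coercive problem on all of $\R^d$. One must carefully track that the boundary correctors needed to restore the transmission conditions live only in an $O(\epsilon)$-strip (giving the $\epsilon^{1/2}$ loss in $H^1$), and that the exterior Navier problem with radiation condition provides a Dirichlet-to-Neumann operator independent of $\epsilon$ whose contribution to the energy identity is controlled by the Rellich-type inequality for elastic scattering.
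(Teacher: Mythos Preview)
Your Step 2 contains a genuine error that breaks the argument. You claim that because $\eta_\epsilon$ vanishes near $\partial D$, the traction jump of $w^\epsilon$ across $\partial D$ equals that of $u^\epsilon - u^{\iss 0}$, which you say is zero. It is not. The interior conormal for $u^\epsilon$ is $(C(x/\epsilon)\nabla u^\epsilon)\cdot\nu$, while for $u^{\iss 0}$ it is $(\overline C\,\nabla u^{\iss 0})\cdot\nu$. Subtracting the two transmission conditions gives
\[
(T_\nu w^\epsilon)^+ - \bigl(C(\tfrac{x}{\epsilon})\nabla w^\epsilon\bigr)^-\!\cdot\nu
=\bigl(\overline C - C(\tfrac{x}{\epsilon})\bigr)\nabla u^{\iss 0}\cdot\nu
\quad\text{on }\partial D,
\]
which is $O(1)$ in $L^2(\partial D)$. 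Moreover, since $\overline C\neq\int_Y C(y)\,dy$ in general (the homogenized tensor involves the corrector), this quantity does \emph{not} have zero $Y$-mean and cannot be written as the curl of a periodic potential; there is no gain in $H^{-1/2}(\partial D)$ from oscillation. Plugging this $O(1)$ Neumann datum into your a priori bound yields only $\|w^\epsilon\|_{H^1}=O(1)$, not $O(\epsilon^{1/2})$. The cutoff $\eta_\epsilon$ fixes the Dirichlet jump created by $\epsilon u^{\iss 1}$, but it does nothing for this conormal mismatch, which is intrinsic to the transmission structure.

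The paper avoids this precisely by \emph{not} cutting off $u^{\iss 1}$: it keeps $\epsilon u^{\iss 1}$ all the way to $\partial D$ and introduces an explicit boundary corrector $\widetilde\varphi^\epsilon$ solving a transmission problem whose jump data are $u^{\iss 1}$ (Dirichlet) and $\bigl(\tfrac{v^{\iss 0}-\overline v^{\iss 0}}{\epsilon}+\widetilde v^{\iss 1}\bigr)\cdot\nu$ (traction). The point is that $v^{\iss 0}-\overline v^{\iss 0}=C(y)(\nabla_x u^{\iss 0}+\nabla_y u^{\iss 1})-\overline C\nabla u^{\iss 0}$ \emph{does} have zero $Y$-mean and is $y$-divergence-free, so it equals $\mathrm{rot}_y q$ for a bounded periodic $q$; this structure is what produces the $\epsilon^{1/2}$ gain in $H^{-1/2}(\partial D)$ after interpolation. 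The $\epsilon^{1/2}$ in \eqref{L^2 rates convergence 1} then comes from estimating $\|\widetilde\varphi^\epsilon\|_{H^1}$, and the $L^2$ rate \eqref{L^2 rates convergence} comes from a duality argument in which the auxiliary function solves the \emph{oscillating} problem \eqref{corrector with test function} (not the homogenized one you propose), and one re-applies the $H^1$ theory to it. Your overall architecture is reasonable, but to make it work for this transmission problem you must keep the corrector at the boundary and control the resulting jumps via the flux-corrector (``$\mathrm{rot}\,q$'') structure, which is exactly the mechanism you are missing.
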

Furthermore we have the following higher-order convergent rates between solutions $u^\epsilon$ and $u^{\ssi(0)}$.
\begin{theorem}[Higher-order convergence in $L^2$ and $H^1$]\label{MAIN THEOREM 2}
Let $u^\epsilon$ and $u^{\iss 0}$ be the solutions of \eqref{Homogenization Equation} and \eqref{0- Homogenized Equation} respectively. Let $u^{\iss 1}$ and $u^{\iss 2}$ 
be defined by equations \eqref{higher u_1 bulk} and \eqref{high u_2} in $D$, respectively,
with $u^{\iss 1}=0$ and $u^{\iss 2}=0$ 
in $\mathbb R^d\setminus \overline{D}$. Let $\varphi^\epsilon $ and $\theta^\epsilon$ be the boundary correctors given by \eqref{varphi_epsilon} and \eqref{boundary corrector 2nd order}. 
Then for any ball $B_R$ with $D \subset B_R$, we have
\begin{equation}\label{higher order error estimate 1}
\|u^\epsilon -(u^\iss 0+\epsilon u^\iss 1+\epsilon^2 u^\iss 2 + \epsilon \varphi^\epsilon + \epsilon^2 \theta^\epsilon ) \|_{H^1(B_R)}\leq C_R\epsilon^2 \|u^{\iss 0}\|_{H^4(D)},
\end{equation}
and  
\begin{equation}\label{higher order error estimate}
\|u^\epsilon -(u^{\iss 0}+\epsilon u^{\iss 1}+\epsilon \varphi^\epsilon) \|_{L^2(B_R)}\leq C_R\epsilon^2 \|u^{\iss 0} \|_{H^4(D)},
\end{equation}
where $C_R>0$ is a constant independent of $\epsilon$ and $u^\iss 0$.
\end{theorem}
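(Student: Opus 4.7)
The plan is to mimic the $H^1$ argument of Theorem \ref{MAIN THEOREM} pushed one order further, working with the full remainder
\[
w^\epsilon := u^\epsilon - \bigl(u^{\iss 0} + \epsilon u^{\iss 1} + \epsilon^2 u^{\iss 2} + \epsilon \varphi^\epsilon + \epsilon^2 \theta^\epsilon \bigr) \quad\text{in } B_R.
\]
I would show that $w^\epsilon$ solves an elastic transmission problem in $B_R$ whose bulk source is $O(\epsilon^2)$ in $L^2(D)$ and whose Dirichlet and traction jumps across $\partial D$ are $O(\epsilon^2)$ in $H^{1/2}(\partial D)$ and $H^{-1/2}(\partial D)$ respectively, all with multiplier $\|u^{\iss 0}\|_{H^4(D)}$, and then invoke the uniform well-posedness of that transmission problem (via the Fredholm alternative together with the Kupradze radiation condition inherited by $w^\epsilon$) to conclude \eqref{higher order error estimate 1}.

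For the bulk residual inside $D$, I would expand $\nabla \cdot (C(x/\epsilon) \nabla \cdot) + \omega^2 \rho(x/\epsilon)$ acting on the ansatz by the chain rule and collect coefficients of each power of $\epsilon$: the $\epsilon^{-1}$ coefficient vanishes by the cell problem \eqref{Cell problem}, the $\epsilon^0$ coefficient vanishes by the defining equation \eqref{higher u_1 bulk} of $u^{\iss 1}$ together with \eqref{0- Homogenized Equation}, and the $\epsilon^1$ coefficient vanishes by the defining equation \eqref{high u_2} of $u^{\iss 2}$. What remains are $O(\epsilon^2)$ terms whose coefficients are products of oscillating cell-problem quantities (the correctors $\chi$ and their second-order analogues) with third and fourth derivatives of $u^{\iss 0}$; these are bounded by $C \epsilon^2 \|u^{\iss 0}\|_{H^4(D)}$ in $L^2(D)$. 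Outside $\overline D$, all correctors vanish and both $u^\epsilon$ and $u^{\iss 0}$ solve the constant-coefficient Navier equation, so $w^\epsilon$ has no bulk source there and satisfies the Kupradze radiation condition by linearity.

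The delicate step is the treatment of the transmission conditions: the traces of $u^{\iss 1}, u^{\iss 2}$ and their oscillatory tractions $C(x/\epsilon)\nabla$ are nontrivial on $\partial D$, and $\varphi^\epsilon$ from \eqref{varphi_epsilon} is designed to absorb the $O(\epsilon)$ jumps from $\epsilon u^{\iss 1}$ while $\theta^\epsilon$ from \eqref{boundary corrector 2nd order} absorbs the $O(\epsilon^2)$ jumps from $\epsilon^2 u^{\iss 2}$, so that the residual jumps of $w^\epsilon$ are squeezed down to $O(\epsilon^2)$ in the appropriate trace norms, with the prefactor $\|u^{\iss 0}\|_{H^4(D)}$. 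Combining the bulk and boundary bounds with uniform well-posedness yields \eqref{higher order error estimate 1}. The $L^2$ estimate \eqref{higher order error estimate} then follows by writing
\[
u^\epsilon - \bigl(u^{\iss 0} + \epsilon u^{\iss 1} + \epsilon \varphi^\epsilon \bigr) = w^\epsilon + \epsilon^2 u^{\iss 2} + \epsilon^2 \theta^\epsilon,
\]
and combining \eqref{higher order error estimate 1} with the uniform $L^2$-boundedness of $u^{\iss 2}$ and $\theta^\epsilon$ by $\|u^{\iss 0}\|_{H^4(D)}$. The principal technical obstacle is the construction and trace-norm estimation of $\varphi^\epsilon$ and $\theta^\epsilon$: their gradients carry the singular factor $\epsilon^{-1}\nabla_y \chi(\cdot/\epsilon)$, so isolating only $O(\epsilon^2)$ residual contributions requires careful bookkeeping of the oscillatory cancellations on the curved boundary $\partial D$ and sharp trace estimates in $\epsilon$-weighted energy norms; each additional power of $\epsilon$ costs one more derivative on $u^{\iss 0}$, which accounts for the jump from $H^2$ in Theorem \ref{MAIN THEOREM} to $H^4$ here.
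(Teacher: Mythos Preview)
Your overall strategy---show the full remainder solves a transmission problem with $O(\epsilon^2)$ data and invoke uniform well-posedness---is essentially the primal version of what the paper does by duality (testing against an auxiliary $\Phi^\epsilon$ solving \eqref{corrector with test function 2nd order}), so the architecture is fine. The $L^2$ part of your argument also matches the paper's: once \eqref{higher order error estimate 1} is in hand, the paper simply quotes $\|u^{\iss 2}\|_{L^2}\le C\|u^{\iss 0}\|_{H^4}$ and $\|\theta^\epsilon\|_{L^2}\le C\|u^{\iss 0}\|_{H^4}$ and adds.

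There is, however, a concrete error in your bulk residual bookkeeping. Writing $\mathcal L_\epsilon=\epsilon^{-2}L_0+\epsilon^{-1}L_1+L_2$ with $L_0=\nabla_y\!\cdot(C\nabla_y)$, $L_1=\nabla_y\!\cdot(C\nabla_x)+\nabla_x\!\cdot(C\nabla_y)$, $L_2=\nabla_x\!\cdot(C\nabla_x)+\omega^2\rho$, one has
\[
\mathcal L_\epsilon\bigl(u^{\iss 0}+\epsilon u^{\iss 1}+\epsilon^2 u^{\iss 2}\bigr)
=\epsilon\bigl(L_1 u^{\iss 2}+L_2 u^{\iss 1}\bigr)+\epsilon^2 L_2 u^{\iss 2}.
\]
Equation \eqref{high u_2} (via \eqref{eqn u_2 derive}) is what kills the $\epsilon^0$ coefficient, not the $\epsilon^1$ coefficient; the term $L_1 u^{\iss 2}+L_2 u^{\iss 1}$ does \emph{not} vanish as an $L^2$ function (only its cell average does, by \eqref{homogenized tilde{u}1}). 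So the bulk source is $O(\epsilon)$ in $L^2(D)$, and your claim ``bounded by $C\epsilon^2\|u^{\iss 0}\|_{H^4(D)}$ in $L^2(D)$'' fails.

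The paper circumvents this by working with the first-order pair $(w^\epsilon,\zeta^\epsilon)$, where the auxiliary flux $\widehat v^{\iss 2}$ of \eqref{high v_2} is built precisely so that \eqref{system of DEs 2nd order} holds with $O(\epsilon^2)$ right-hand sides in $L^2\times L^2$; testing the first line against $\nabla\Phi^\epsilon$ and the second against $\Phi^\epsilon$ then gives \eqref{estimate err 2nd order}. In second-order language this says the bulk residual is $O(\epsilon^2)$ only in $H^{-1}(D)$, as a sum of an $L^2$ term and the divergence of an $L^2$ term. If you replace ``$O(\epsilon^2)$ in $L^2(D)$'' by ``$O(\epsilon^2)$ in $H^{-1}(D)$'' and feed that into the well-posedness estimate (which maps $H^{-1}$ sources to $H^1$ solutions), your direct approach goes through; but you do need the flux object $\widehat v^{\iss 2}$---this is also what appears in the conormal jump data for $\theta^\epsilon$ in \eqref{boundary corrector 2nd order}, so ``$\theta^\epsilon$ absorbs the jumps from $\epsilon^2 u^{\iss 2}$'' is not quite accurate either.
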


Remark that the above convergence estimates include boundary correctors $\varphi^\epsilon$ and $\theta^\epsilon$ since the periodic media has bounded support, as contrary to the case that the periodic media occupies $\R^d$.
\item[(b)] A second-order homogenization and wave dispersion: the higher-order homogenization enables to study the anisotropic dispersion of wave propagation in periodic media.  Formally the averaged wave field $U$ of $u_\epsilon$ up to order $\epsilon^2$ is governed by the fourth-order equation in $D$   
\begin{align} \label{fourth-order PDE}
\notag  \nabla \cdot \big( \overline{C} \nabla U \big) + \omega^2 \overline{\rho} U &=  -\epsilon \big( F : \nabla^3 U + \omega^2 G: \nabla U \big) \\
&~- \epsilon^2 \big( D: \nabla^4 U +   \omega^2 E: \nabla^2 U \big)  + O(\epsilon^3),
\end{align}
where  $D$ is a sixth-order tensor, $E$ is a fourth-order tensor, $F$ is a fifth-order tensor and $G$ is a third-order tensor respectively (see Section \ref{Section 4} for the definitions and details). The fourth-order partial differential equation in $D$ formally introduces the dispersion as is seen from the right hand side. In the low-frequency long-wavelength regime for wave propagation in periodic media, the wave dispersion  has been demonstrated by a fourth-order partial differential equation for the acoustic case \cite{cakoni2016homogenization}. In the particular case that the periodic media occupies $\R^d$, \eqref{fourth-order PDE} models the wave propagation and transcends the quasi-static regime. To the authors' knowledge, our second-order homogenization for elastic wave is new in the literature.
\end{enumerate}

\medskip

This article is further structured as follows. In Section \ref{Section 2}, we study the asymptotic analysis for the elastic homogenization problem. From the analysis, we can prove the $L^2$ convergent rates for the elastic homogenization problem, which shows Theorem \ref{MAIN THEOREM}. In Section \ref{Section 3}, we develop the higher-order asymptotic analysis. This gives us more delicate information about the convergent rates between solutions $u^\epsilon$ and $u^{\iss 0}$ and we can use it to prove Theorem \ref{MAIN THEOREM 2}. We further study a second-order homogenized model for the elastic system in Section \ref{Section 4}, where the anisotropic dispersion was demonstrated. In Appendix, for self-contained proofs, we offer fundamental materials which is used to demonstrate our homogenization theory for the elastic scattering problem.

\subsection{Notation}
\begin{enumerate}
\item
We use sub-index to represent the component of a tensor, in particular the $i_1 i_2 \cdots i_n$ component of a $n$-th order tensor $\chi$ is represented by $\chi_{i_1 i_2 \cdots i_n}$.
\item We use Einstein summation convention for the repeated indices.
\item $\cdot$ denotes the standard inner product and $:$ denotes the standard contraction between two tensors.
\item $C_c^\infty(\Omega)$ denotes the space consists of $C^\infty$ functions that are compactly supported in $\Omega$.
\end{enumerate}

\subsection{Acknowledgment}
The work was initiated when the authors participated the annual program on ``Mathematics and Optics"
 (2017--2018) at the Institute for Mathematics and its Applications (IMA) at the University of Minnesota. Y.-H. Lin would like to thank the support from IMA for his stay at the University of Minnesota.

\section{Asymptotic analysis of the transmission problem}\label{Section 2}

To begin with, we recall the  two-scale homogenization method for the elasticity scattering in periodic media.
\subsection{Basic asymptotic analysis}
Let us  consider $x$ and $y=\dfrac{x}{\epsilon}$
that are the slow and fast variables, respectively. Let $u^{\epsilon}$
be a solution of \eqref{Homogenization Equation} and we rewrite \eqref{Homogenization Equation}
to a first-order system 
\begin{equation}
\begin{cases}
v^{\epsilon}-C(\dfrac{x}{\epsilon})\nabla u^{\epsilon}=0,\\
\nabla\cdot v^{\epsilon}+\omega^{2}\rho(\dfrac{x}{\epsilon})u^{\epsilon}=0,\textsl{}
\end{cases}\mbox{ for }x\in D.\label{System of differential equation}
\end{equation}
Note that $u^{\epsilon}=(u^{\epsilon}_{i})_{1\leq i\leq d}$ is a
vector-valued function and $v^{\epsilon}=(v^{\epsilon}_{ij})_{1\leq i,j \leq d}$ is
a matrix-valued function. The two-scale homogenization method begins with ansatz $u^{\epsilon}=u^{\epsilon}(x,y)$ and $v^{\epsilon}=v^{\epsilon}(x,y)$
such that  
\begin{eqnarray}
 & u^{\epsilon}(x,y)= & u^{\iss 0}(x,y)+\epsilon u^{\iss 1}(x,y)+\epsilon^{2}u^{\iss 2}(x,y)+\cdots=\sum_{k=0}^{\infty}\epsilon^{k}u^{\iss k}(x,y),\nonumber \\
 & v^{\epsilon}(x,y)= & v^{\iss 0}(x,y)+\epsilon v^{\iss 1}(x,y)+\epsilon^{2}v^{\iss 2}(x,y)+\cdots=\sum_{k=0}^{\infty}\epsilon^{k}v^{\iss k}(x,y).\label{asymptotic expansions}
\end{eqnarray}
Furthermore since there are no microstructure in the exterior domain $\mathbb{R}^{d}\backslash\overline{D}$,  the asymptotic expansions of
$u^{\epsilon}$ and $v^{\epsilon}$ are nothing but 
\[
u^{\epsilon}=u^{\iss 0}(x)\mbox{ and }v^{\epsilon}=v^{\iss 0}(x).
\]

Proceeding with the ansatz, we consider $x$ and $y=\dfrac{x}{\epsilon}$ as independent
variables and correspondingly
\begin{equation}
\nabla=\nabla_{x}+\dfrac{1}{\epsilon}\nabla_{y}. \label{Chain rule}
\end{equation}
Use the formal expansion and combine \eqref{System of differential equation},
\eqref{asymptotic expansions} and \eqref{Chain rule}, then we have
\begin{equation}
\begin{cases}
\sum_{m=0}^{\infty}\epsilon^{m}v^{\iss m}(x,y)-C(y)\Big(\nabla_{x}+\dfrac{1}{\epsilon}\nabla_{y}\Big)\Big(\sum_{m=0}^{\infty}\epsilon^{m}u^{\iss m}(x,y)\Big)=0,\\
\Big(\nabla_{x}+\dfrac{1}{\epsilon}\nabla_{y} \Big)\cdot\Big(\sum_{m=0}^{\infty}\epsilon^{m}v^{\iss m}(x,y)\Big)+\omega^{2}\rho(y)\sum_{m=0}^{\infty}\epsilon^{m}u^{\iss m}(x,y)=0.
\end{cases}\label{expaned equation}
\end{equation}
By collecting the $\epsilon^m$ terms in equation \eqref{expaned equation} with $m=-1,0,1\cdots$,   
\begin{align}
O(\epsilon^{-1}):\qquad & C\nabla_{y}u^{\iss 0}=0 ,\label{epsilon -1 u}\\
 & \nabla_{y}\cdot v^{\iss 0}=0.\label{epsilon -1 v}\\
O(1):\qquad  & v^{\iss 0}-C\left(\nabla_{x}u^{\iss 0}+\nabla_{y}u^{\iss 1}\right)=0, \label{epilson 0 u}\\
 & \left(\nabla_{x}\cdot v^{\iss 0}+\nabla_{y}\cdot v^{\iss 1}\right)+\omega^{2}\rho u^{\iss 0}=0.\label{epsilon 1 v}
\end{align}
Via \eqref{epsilon -1 u}, we get $u^{\iss 0}=u^{\iss 0}(x)$. From \eqref{epsilon -1 v},
\eqref{epilson 0 u} and the cell function $\chi_{\ell mn}$ solving \eqref{Cell problem},
we can find that the bulk corrector  $u^{\iss 1}=(u^{\iss 1}_{\ell})_{1\leq\ell\leq d}$ is given component-wisely by
\begin{equation}\label{bulk correction}
u^{\iss 1}_{\ell}(x,y)=-\chi_{\ell mn}(y)\dfrac{\partial u^{\iss 0}_{n}}{\partial x_{m}}(x).
\end{equation}
We call $u^{\iss1}(x,y)$ the  first-order corrector for the periodic homogenization problem. Plugging equation \eqref{bulk correction} directly into \eqref{epilson 0 u} yields
\begin{align}
\notag v^{\iss 0}_{ij}(x,y) & =  \left(C(y)\left(\nabla_{x}u^{\iss 0}(x)+\nabla_{y}u^{\iss 1}(x,y)\right)\right)_{ij}\\
 & =  C_{ijkl}(y)\dfrac{\partial u^{\iss 0}_{\ell}}{\partial x_{k}}-C_{ijkl}\dfrac{\partial\chi_{\ell m n }}{\partial y_{k}}\dfrac{\partial u^{\iss 0}_{n}}{\partial x_{m}}, \label{standard v_0}
\end{align}
and consequently the $Y$-average of $v_{0}$ is 
\[
\langle  v^{\iss 0} \rangle:= \overline{v}^{\iss 0}= \int_{Y}v^{\iss 0}(x,y)dy=\overline{C}\nabla u^{\iss 0}.
\]

In order to solve $v^{\iss 1}$, we introduce the following partial differential equation in the unit cell. Let $q(x,y)\in H_{per}^{1}(\mathbb{R}^{d\times d};Y)$ be a solution to
\begin{equation}
\label{rotyq} \mbox{rot}_{y}(q)=v^{\iss 0}-\overline{C}\nabla u^{\iss 0},
\end{equation}
where $q(x,y)$ is 
\begin{align}
\begin{cases}\notag
 q=(q_1,q_2)\text{ when }d=2\text{ and }q_i\text{'s are scalar functions for }i=1,2,\\
 q=(q_1,q_2,q_3)\text{ when }d=3\text{ and }q_i\text{'s are column vectors for }i=1,2,3,
\end{cases}
\end{align} 
and let $\gamma_{m\ell}(y)$ be an $Y$-periodic vector-valued function solving the following equation
\begin{align}\label{equation gamma}
\begin{cases}
\dfrac{\partial }{\partial y_i} \Big(  C_{ijk\ell} \dfrac{\partial \gamma_{m\ell}}{\partial y_k}\Big) = \big(\overline{\rho} -\rho \big) \delta_{jm} ,\\
\int_Y \gamma_{m\ell} (y)dy=0.
\end{cases}
\end{align}
We remark that the right hand side of \eqref{equation gamma} has zero mean (i.e., $\int _Y(\overline\rho -\rho )\,dy=0$), and then compatibility condition is satisfied, which means \eqref{equation gamma} is solvable.

From \eqref{rotyq}, \eqref{equation gamma}, \eqref{epsilon 1 v}  and note that $\nabla_y\cdot(\mathrm{rot}_xq(x,y))=-\nabla _x\cdot (\mathrm{rot}_yq(x,y))$, one candidate for $v^{\iss 1}$ is $\widetilde v^{\iss 1}=(\widetilde v^{\iss 1}_{ij})_{1\leq i,j\leq d}$ defined by 
\begin{equation}\label{equation tilde v1}
\widetilde{v}_{ij}^{\iss 1}(x,y):=\left(\text{rot}_x(q(x,y))\right)_{ij}+\omega ^2 C_{ijk\ell}(y)\dfrac{\partial \gamma_{m\ell}}{\partial y_k}(y) u_m^{\iss 0}(x),\text{ for }1\leq i,j\leq d.
\end{equation}
We remark that the function $\widetilde{v}^{\iss 1}$ in the form \eqref{equation tilde v1} is convenient in proving Theorem \ref{MAIN THEOREM}, and we will choose another form of $\widetilde{v}^{\iss 1}$ to derive higher-order estimate.

\subsection{Rates of convergence in $H^1$ and $L^2$}

Now let us introduce the boundary corrector   $\widetilde\varphi^\epsilon$ that solves
\begin{align}
\begin{cases}
\nabla \cdot \left(C(\dfrac{x}{\epsilon})\nabla \widetilde\varphi^\epsilon\right)+\omega ^2\rho(\dfrac{x}{\epsilon})\widetilde\varphi^\epsilon =0 &\mbox{ in }D,\\
\Delta^*\widetilde\varphi ^\epsilon +\omega ^2\widetilde\varphi^\epsilon =0 &\mbox{ in }\mathbb R^d\setminus \overline{D},\\
(\widetilde\varphi^\epsilon )^+ - (\widetilde\varphi^\epsilon )^- = u^{\iss 1} &\mbox{ on }\partial D,\\
(T_\nu \widetilde\varphi ^\epsilon )^+-(C(\dfrac{x}{\epsilon }))\nabla \widetilde\varphi ^\epsilon)^-\cdot \nu  =\left(\dfrac{v^{\iss 0}-\overline{v}^{\iss 0}}{\epsilon}+\widetilde{v}^{\iss 1}\right)\cdot \nu &\mbox{ on }\partial D,
\end{cases}\label{boundary corrector}
\end{align}
where $\widetilde\varphi^\epsilon$ satisfies the Kupradze radiation condition \eqref{Kupradze radiation condition}. By plugging \eqref{bulk correction} and \eqref{equation tilde v1} into \eqref{boundary corrector}, one can see that the transmission conditions on $\partial D$ is 
\begin{align}
\begin{cases}
\Big( \big(\widetilde\varphi^\epsilon\big) ^+-\big(\widetilde\varphi^\epsilon \big)^- \Big)_j = -\chi_{j mn}(y)\dfrac{\partial u^{\iss 0}_n}{\partial x_m},\    \mbox{ for }1\leq \alpha \leq d &\mbox{ on }\partial D\\
(T_\nu \widetilde\varphi ^\epsilon )^+-(C(\dfrac{x}{\epsilon }))\nabla \widetilde\varphi ^\epsilon)^-\cdot \nu  =\left[\text{rot}~q+\omega ^2 (C:\nabla \gamma)(y)u^{\iss 0}\right]\cdot \nu &\mbox{ on }\partial D,
\end{cases}\label{new transmission condition}
\end{align}
where $\text{rot} = \text{rot}_x + \frac{1}{\epsilon} \text{rot}_y$.

Before proceeding with the following lemma, we remark that the solution $u^{\iss 0}$ to \eqref{0- Homogenized Equation} is sufficiently smooth, since \eqref{0- Homogenized Equation} is a well-posed transmission problem and moreover the tensor $\overline{C}$ and density $\overline{\rho}$ are constants and the boundary data is sufficiently smooth.

\begin{lemma}\label{H1 convergence lemma}
Let $u^\epsilon$ and $u^{\iss 0}$ be the solutions of \eqref{Homogenization Equation} and \eqref{0- Homogenized Equation}, respectively. Let $u^{\iss 1}$ be the bulk corrector given by \eqref{bulk correction} in $D$ with $u^{\iss 1}=0$ in $\mathbb R^d\setminus \overline{D}$ and $\widetilde\varphi_\epsilon $ be the boundary corrector given by \eqref{boundary corrector}. Then for any ball $B_R$ with $D \subset B_R$, we have 
\begin{equation}\label{error estimate 1}
\|u^\epsilon -(u^{\iss 0}+\epsilon u^{\iss 1}+\epsilon \widetilde\varphi^\epsilon) \|_{H^1(B_R)}\leq C_R\epsilon \|u^{\iss 0} \|_{H^2(D)},
\end{equation}
where $C_R>0$ is a constant independent of $\epsilon$ and $u^{\iss 0}$.
\end{lemma}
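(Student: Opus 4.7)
The plan is to set $w^\epsilon := u^\epsilon - u^{(0)} - \epsilon u^{(1)} - \epsilon \widetilde{\varphi}^\epsilon$ (extending $u^{(1)}$ by zero outside $D$), write down the transmission problem that $w^\epsilon$ solves on $\mathbb{R}^d$, show that the interior residual and the traction jump on $\partial D$ are both of size $\epsilon$, and conclude from the stability of the elastic scattering transmission problem. Outside $D$, $w^\epsilon$ manifestly satisfies $(\Delta^* + \omega^2) w^\epsilon = 0$ and the Kupradze radiation condition. By construction of $\widetilde{\varphi}^\epsilon$ in \eqref{boundary corrector} the Dirichlet jump $[w^\epsilon]$ vanishes. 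Expanding the interior flux as $C(x/\epsilon)\nabla(u^{(0)} + \epsilon u^{(1)}) = v^{\iss 0}(x,x/\epsilon) - \epsilon C(x/\epsilon)\chi(x/\epsilon):\nabla^2 u^{(0)}(x)$ and plugging into the transmission condition, the singular piece $\epsilon^{-1}(v^{\iss 0} - \overline{v}^{\iss 0})\cdot\nu$ is exactly cancelled by the Neumann data of $\widetilde{\varphi}^\epsilon$, leaving a residual traction jump $-\epsilon(\widetilde{v}^{\iss 1} + C\chi:\nabla^2 u^{(0)})\cdot\nu$, of size $\epsilon$ in $H^{-1/2}(\partial D)$ controlled by $\|u^{\iss 0}\|_{H^2(D)}$ (together with the implicit smoothness of $u^{\iss 0}$).

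The heart of the argument is the interior residual $R^\epsilon_D := -\mathcal{L}^\epsilon(u^{\iss 0} + \epsilon u^{\iss 1})$, where $\mathcal{L}^\epsilon u := \nabla\cdot(C(x/\epsilon)\nabla u) + \omega^2\rho(x/\epsilon) u$. Subtracting the homogenized equation and using $\nabla_y\cdot v^{\iss 0}=0$ from \eqref{Cell problem}, the $O(1)$ part of $R^\epsilon_D$ reduces to $\nabla\cdot[(v^{\iss 0} - \overline{v}^{\iss 0})(x, x/\epsilon)] + \omega^2(\rho(x/\epsilon) - \overline{\rho})u^{\iss 0}$; inserting $v^{\iss 0} - \overline{v}^{\iss 0} = \mathrm{rot}_y q$ from \eqref{rotyq}, the commutation $\nabla_x\cdot\mathrm{rot}_y = -\nabla_y\cdot\mathrm{rot}_x$, and the $y$-divergence representation of $\omega^2(\rho - \overline{\rho})u^{\iss 0}$ arising from \eqref{equation gamma}, this $O(1)$ part collapses to the single $y$-divergence $\partial_{y_i}\widetilde{v}^{\iss 1}_{ij}(x, x/\epsilon)$. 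The chain-rule identity $(\partial_{y_i}F)(x, x/\epsilon) = \epsilon\,\partial_{x_i}[F(x, x/\epsilon)] - \epsilon\,(\partial_{x_i}F)(x, x/\epsilon)$ applied with $F = \widetilde{v}^{\iss 1}$, followed by integration by parts against a test function $\phi$, turns the first piece into $-\epsilon\int\widetilde{v}^{\iss 1}(x, x/\epsilon):\nabla\phi$, bounded by $C\epsilon\|u^{\iss 0}\|_{H^2(D)}\|\phi\|_{H^1(D)}$. For the second piece the key observation is that $\nabla_x\cdot\mathrm{rot}_x q \equiv 0$ (divergence of a curl), so $\partial_{x_i}\widetilde{v}^{\iss 1}_{ij}$ retains only the $\gamma$-contribution $\omega^2 C_{ijk\ell}\partial_{y_k}\gamma_{m\ell}\,\partial_{x_i}u^{\iss 0}_m$, which is $L^2$-bounded by $\|u^{\iss 0}\|_{H^1(D)}$. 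Combined with the already-$O(\epsilon)$ terms $-\epsilon\nabla\cdot[C(x/\epsilon)\chi(x/\epsilon):\nabla^2 u^{\iss 0}]$ and $\epsilon\omega^2\rho u^{\iss 1}$, this gives $\|R^\epsilon_D\|_{H^{-1}(D)} \leq C\epsilon\|u^{\iss 0}\|_{H^2(D)}$.

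Finally, by the $\epsilon$-uniform well-posedness of the elastic transmission problem with Kupradze radiation --- Fredholm theory plus uniqueness away from an exceptional set of $\omega$, relying on the uniform strong convexity \eqref{strong convexity} of $C(x/\epsilon)$ --- the solution operator from data $(R, \phi, \psi) \in H^{-1}(D)\times H^{1/2}(\partial D)\times H^{-1/2}(\partial D)$ to $H^1(B_R)$ is bounded with a constant independent of $\epsilon$, which combined with the estimates above yields \eqref{error estimate 1}. The main obstacle is the interior computation: the three correctors $\chi$, $q$, $\gamma$ must be orchestrated so that every $O(1)$ contribution in $R^\epsilon_D$ becomes a pure $y$-divergence, after which the identity $\nabla_x\cdot\mathrm{rot}_x q = 0$ is precisely what keeps the residual bound at the $\|u^{\iss 0}\|_{H^2}$ level rather than forcing $\|u^{\iss 0}\|_{H^3}$; a secondary subtlety is the $\epsilon$-uniform stability constant in the transmission problem, handled by a standard Fredholm/compactness argument.
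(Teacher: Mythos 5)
Your overall route --- build the full error $w^\epsilon=u^\epsilon-u^{\iss 0}-\epsilon u^{\iss 1}-\epsilon\widetilde\varphi^\epsilon$, estimate its interior residual and its transmission data, and invoke stability of the elastic transmission problem --- is different in presentation from the paper's duality argument with the auxiliary problem \eqref{corrector with test function}, and your interior bookkeeping (the roles of $\chi$, $q$, $\gamma$, the identity $\nabla_y\cdot\mathrm{rot}_x q=-\nabla_x\cdot\mathrm{rot}_y q$, the chain-rule trick, and $\nabla_x\cdot\mathrm{rot}_x q=0$) is essentially sound. The genuine gap is at the boundary. You correctly identify the residual traction jump as $\epsilon\big(C(x/\epsilon)\nabla_x u^{\iss 1}-\widetilde v^{\iss 1}\big)\cdot\nu$, but the claim that it is of size $\epsilon$ in $H^{-1/2}(\partial D)$ with constant $\|u^{\iss 0}\|_{H^2(D)}$ is not justified: pointwise this datum contains $\nabla^2u^{\iss 0}$ on $\partial D$ (through $\chi:\nabla^2u^{\iss 0}$ and through $\mathrm{rot}_x q$), and no trace of $\nabla^2u^{\iss 0}$ is controlled by $\|u^{\iss 0}\|_{H^2(D)}$; your hedge ``implicit smoothness of $u^{\iss 0}$'' would replace $H^2$ by $H^3$-type norms and prove a different inequality than \eqref{error estimate 1}. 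The alternative, estimating the conormal trace via the graph-norm trace theorem (Theorem \ref{trace theorem}), requires the divergence of that matrix field, whose $\epsilon^{-1}\nabla_y$-part is exactly the $O(1)$ interior residual, so that route only gives an $O(1)$ bound on the jump and the factor $\epsilon$ is lost. Hence the modular scheme ``interior residual small in $H^{-1}(D)$'' plus ``traction jump small in $H^{-1/2}(\partial D)$'' plus ``stability with data in $H^{-1}\times H^{1/2}\times H^{-1/2}$'' does not close as stated.

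The missing ingredient is a cancellation that your splitting destroys: the boundary terms created when the $\epsilon\,\nabla\cdot(\cdot)$ pieces of the interior residual are integrated by parts against test functions not vanishing on $\partial D$ must be kept coupled with the traction jump in a single variational identity over $B_R$. This is precisely how the paper proceeds: testing against $\Phi^\epsilon$ of \eqref{corrector with test function}, the whole boundary integrand on $\partial D$ vanishes identically (this is the point of the data chosen in \eqref{boundary corrector} together with \eqref{system of DEs}), leaving only interior terms bounded by $\epsilon\|u^{\iss 0}\|_{H^2(D)}\|\Phi^\epsilon\|_{H^1(D)}$. If you repair your argument by running the weak formulation for $w^\epsilon$ with the interior and boundary residuals treated jointly, you essentially reproduce the paper's computation, with the duality step replaced by a direct a priori estimate for the $\epsilon$-dependent transmission problem; note that the $\epsilon$-uniformity of that stability constant does not follow from ``Fredholm plus uniqueness'' at each fixed $\epsilon$ --- though the paper relies on the analogous unproved uniform bound $\|\Phi^\epsilon\|_{H^1(D)}\le C\|\phi\|_{H^{-1}(B_R)}$, so this is a shared rather than an additional weakness. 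A small bookkeeping remark: the $O(1)$ residual is the $y$-divergence of $C\nabla_x u^{\iss 1}-\widetilde v^{\iss 1}$, not of $\widetilde v^{\iss 1}$ alone; your grouping of $-\epsilon\nabla\cdot\big(C\chi:\nabla^2u^{\iss 0}\big)$ as an ``already $O(\epsilon)$'' term covers this only if $\nabla$ there denotes the total divergence of the composed function.
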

\begin{proof}
Consider the error functions in $D$ given by 
\begin{equation}
\label{w_epsilon}w^\epsilon := u^\epsilon -u^{\iss 0} -\epsilon u^{\iss 1},
\end{equation}
and 
\begin{equation}
\label{zeta_epsilon}\zeta^\epsilon :=C(\frac{x}{\epsilon})\nabla u^\epsilon -v^{\iss 0} -\epsilon\widetilde{v}^{\iss 1},
\end{equation}
where $w^\epsilon$ is a vector-valued function and $\zeta^\epsilon$ is a matrix-valued function. From straightforward calculations, we can get 
\begin{align}
\begin{cases}
C(\dfrac{x}{\epsilon})\nabla w^\epsilon -\zeta^\epsilon=\epsilon \big( \widetilde{v}^{\iss 1}-C(y)\nabla _xu^{\iss 1} \big),\\
(\nabla \cdot \zeta^\epsilon)_j +\omega^2 \rho(y)w^\epsilon_j= -\epsilon \omega^2 \Big(  \rho(y)u^{\iss 1}_j+C_{ijk\ell}(y)\dfrac{\partial \gamma_{m\ell}}{\partial y_k}\dfrac{\partial u^{\iss 0}_m}{\partial x_i} \Big), \text{ for }1\leq j\leq d,
\end{cases}\label{system of DEs}
\end{align} 
which is a first order differential system of $(w^\epsilon,\zeta^\epsilon)$ such that their right hand sides are of order $O(\epsilon)$.

While outside $D$ we simply consider the error functions $w^\epsilon:=u^\epsilon-u^{\iss 0}$ and $\zeta ^\epsilon := C^{\iss 0} \nabla w^\epsilon$, and then they satisfy
\begin{equation*}
-\nabla \cdot \zeta^\epsilon = \omega ^2 w^\epsilon .
\end{equation*}
Let $B_R$ be a sufficiently large ball that contains $\overline{D}$ and $\phi \in C^\infty _c (B_R)$ be a vector-valued test function, we shall derive an estimate for
\begin{align}
\int _{B_R}(w^\epsilon - \epsilon \widetilde\varphi^\epsilon )\cdot\phi dx &. \label{lemma 1 quantity}
\end{align}

To estimate the above quantity we consider another auxiliary function $\Phi^\epsilon \in H^1_{loc} (\R^d)$ by 
\begin{align}\label{corrector with test function}
\begin{cases}
\nabla \cdot (C(\dfrac{x}{\epsilon})\nabla \Phi ^\epsilon)+\omega ^2\rho(\dfrac{x}{\epsilon})\Phi^\epsilon =\phi &\mbox{ in }D,\\
\Delta^*\Phi ^\epsilon +\omega ^2\Phi^\epsilon =\phi &\mbox{ in }\mathbb R^d\setminus \overline{D},\\
(\Phi^\epsilon)^+-(\Phi^\epsilon)^- = 0 &\mbox{ on }\partial D,\\
(T_\nu\Phi ^\epsilon )^+-(C(\dfrac{x}{\epsilon }))\nabla \Phi ^\epsilon)^-\cdot \nu  =0 &\mbox{ on }\partial D,
\end{cases}
\end{align}
where $\phi$ is the test function as we mentioned before and $\Phi^\epsilon $ further satisfies the Kupradze radiation condition \eqref{Kupradze radiation condition} at infinity. Now we replace $\phi$ in \eqref{lemma 1 quantity} by  \eqref{corrector with test function}

\begin{align}
\notag\int _{B_R}(w^\epsilon -\epsilon \widetilde\varphi^\epsilon)\cdot\phi dx=&\int _D(w^\epsilon-\epsilon\widetilde\varphi^\epsilon)\cdot\left(\nabla \cdot (C(\dfrac{x}{\epsilon})\nabla \Phi^\epsilon)+\omega^2 \rho(\dfrac{x}{\epsilon})\Phi^\epsilon\right)dx\\
&\notag+\int_{B_R\setminus D}(w^\epsilon-\epsilon \widetilde\varphi^\epsilon)\cdot(\Delta^*\Phi^\epsilon+\omega^2 \Phi^\epsilon)dx\\
=&\notag -\int_D \big( C(\frac{x}{\epsilon})\nabla w^\epsilon \big) : \nabla { \Phi^\epsilon} dx+\epsilon \int_D \big( C(\frac{x}{\epsilon})\nabla\widetilde\varphi^\epsilon \big):\nabla { \Phi^\epsilon}dx \\
&\notag +\int_D \omega ^2 \rho (\frac{x}{\epsilon})(w^\epsilon-\epsilon \widetilde\varphi^\epsilon)\cdot \Phi^\epsilon dx+\int_{\partial D}T_\nu(w^\epsilon -\epsilon \widetilde\varphi^\epsilon)^+ \cdot \Phi^\epsilon dS\\
&\label{some computation 1}+\int_{\partial B_R}(w^\epsilon-\epsilon\widetilde\varphi ^\epsilon)\cdot T_\nu \Phi^\epsilon dS-\int_{\partial B_R}T_\nu (w^\epsilon-\epsilon\widetilde \varphi^\epsilon)\cdot \Phi^\epsilon dS,
\end{align}
where we used the integration by parts formula once for the interior $D$ and twice for the exterior $B_R\setminus \overline{D}$, and the function $w^\epsilon -\epsilon\widetilde\varphi ^\epsilon $ has no jumps across $\partial D$. Furthermore the last two terms of \eqref{some computation 1} are zero, due to the Kupradze radiation condition; indeed from equations (\ref{DtN_Dirichlet}), (\ref{DtN_Neumann}) in Appendix \ref{Appd_DtN} on the discussion of Kupradze radiation condition, a direct calculation yields
\begin{align*}
&\int_{\partial B_R}(w^\epsilon-\epsilon\widetilde\varphi ^\epsilon)\cdot T_\nu  \Phi^\epsilon dS-\int_{\partial B_R}T_\nu (w^\epsilon-\epsilon \widetilde\varphi^\epsilon)\cdot \Phi^\epsilon dS=0.
\end{align*}
Now we have
\begin{align}
\int _{B_R}(w^\epsilon -\epsilon \widetilde\varphi^\epsilon)\cdot\phi dx
=&\notag -\int_D \big( C(\frac{x}{\epsilon})\nabla w^\epsilon \big): \nabla \Phi^\epsilon dx+\epsilon \int_D \big( C(\frac{x}{\epsilon})\nabla\widetilde\varphi^\epsilon \big):\nabla \Phi^\epsilon dx \\
&\notag +\int_D \omega ^2 \rho (\frac{x}{\epsilon})(w^\epsilon-\epsilon \widetilde\varphi^\epsilon)\cdot \Phi^\epsilon dx+\int_{\partial D}T_\nu(w^\epsilon -\epsilon \widetilde\varphi^\epsilon)^+\cdot  \Phi^\epsilon dS.
\end{align}
Since $\widetilde\varphi^\epsilon$ satisfies equation \eqref{boundary corrector}, then from integration by parts
\begin{align}\notag
\int _{B_R}(w^\epsilon -\epsilon \widetilde\varphi^\epsilon)\cdot\phi dx
=&-\int_D \big( C(\frac{x}{\epsilon})\nabla w^\epsilon\big) :\nabla \Phi^\epsilon dx+\epsilon\int _{\partial D}\big(C(\frac{x}{\epsilon})\nabla \widetilde\varphi ^\epsilon\big)^-\nu\cdot \Phi^\epsilon dx\\
&\notag +\omega ^2 \int_D\rho(\frac{x}{\epsilon})w^\epsilon\cdot \Phi^\epsilon dx+\int _{\partial D}\big(T_\nu (w^\epsilon -\epsilon \widetilde\varphi^\epsilon)\big)^+\cdot \Phi^\epsilon dS.
\end{align}
From  \eqref{boundary corrector}, \eqref{system of DEs} and integration by parts we can further obtain
\begin{align}\notag
&\int _{B_R}(w^\epsilon -\epsilon \widetilde\varphi^\epsilon)\cdot\phi dx\\
=&\notag -\int _D\zeta^\epsilon\cdot\nabla \Phi^\epsilon dx+\omega ^2 \int _D\rho (\frac{x}{\epsilon})w_\epsilon \cdot \Phi^\epsilon dx+\int _{\partial D}(T_\nu w^\epsilon)^+\cdot \Phi^\epsilon dS\\
&\notag+\epsilon\int _D \big(-\widetilde{v}^{\iss 1}+C(\frac{x}{\epsilon}) \nabla_x u^{\iss 1} \big) : \nabla \Phi^\epsilon dx+\int_{\partial D}(\overline{v}^{\iss 0} -v^{\iss 0}-\epsilon \widetilde{v}^{\iss 1})\nu\cdot \Phi^\epsilon dS\\
\notag=&-\epsilon\omega^2  \int_D\Big( \rho(\dfrac{x}{\epsilon})u^{\iss 1}_j+C_{ijk\ell}(\dfrac{x}{\epsilon})\dfrac{\partial \gamma_{m\ell}}{\partial y_k}\dfrac{\partial u^{\iss 0}_m}{\partial x_i}\Big) \Phi^\epsilon_j dx \\
&\notag+\epsilon \int _D \left(-\widetilde{v}^{\iss 1}+C(\frac{x}{\epsilon})\nabla_xu^{\iss 1}\right):\nabla \Phi^\epsilon dx\\
&\notag +\int_{\partial D}\Big((\overline{v}^{\iss 0}-v^{(0)}-\epsilon \widetilde{v}^{\iss 1})^-\cdot \nu)-(\zeta^\epsilon )^-\cdot \nu+(T_\nu w^\epsilon )^+ \Big)\cdot \Phi^\epsilon dS.
\end{align}
From equations \eqref{Homogenization Equation}, \eqref{0- Homogenized Equation},  \eqref{boundary corrector}, \eqref{w_epsilon} and \eqref{zeta_epsilon} we can obtain that the the last term in the above equality is zero. Thus, 
\begin{align}\label{CS applying}
&\int _{B_R}(w^\epsilon -\epsilon \widetilde\varphi^\epsilon)\cdot\phi dx\\
=&\notag-\epsilon\omega^2  \int_D\Big(\rho(\dfrac{x}{\epsilon})u^{\iss 1}_j+C_{ijk\ell}(\dfrac{x}{\epsilon})\dfrac{\partial \gamma_{m\ell}}{\partial y_k}\dfrac{\partial u^{\iss 0}_m}{\partial x_i} \Big) \Phi^\epsilon_j dx+\epsilon \int _D \Big(-\widetilde{v}^{\iss 1}+C(\frac{x}{\epsilon})\nabla_xu^{\iss 1}\Big):\nabla \Phi^\epsilon dx,
\end{align}
Note that the function $q$ solves \eqref{rotyq}, then one can choose $q$ such that 
\begin{align*}\label{tilde v1 estimate}
\sup_{y\in Y}|\widetilde{v}^{\iss 1}|\leq C\left(2\sum _{i,j}\left|\dfrac{\partial u^{\iss 0}}{\partial x_i \partial x_j}\right|+|u^{\iss 0}|\right),
\end{align*}
where $C>0$ is a constant independent of $\epsilon $. Recall that $u^{\iss 1}$ is represented by \eqref{bulk correction}, then we obtain
\begin{align}\notag
\left\|\rho u^{\iss 1}_j+C_{ijk\ell}\dfrac{\gamma_{m\ell}}{\partial y_k}\dfrac{\partial u^{\iss 0}_m}{\partial x_i}\right\|_{L^2(D)}\leq C\|u^{\iss 0}\|_{H^2(D)} \mbox{ and } \left\|C_{ijk\ell}\dfrac{\partial u^{\iss 1}_\ell}{\partial x_k}\right\|_{L^2(D)}\leq C\|u^{\iss 0} \|_{H^2(D)},
\end{align}
where the first inequality holds for $1 \leq j \leq d$ and the second inequality holds for $1\leq i,j \leq d$. Appling the Cauchy-Schwartz inequality to \eqref{CS applying} we can obtain
\begin{equation*}
\left|\int _{B_R}(w^\epsilon -\epsilon \widetilde\varphi^\epsilon )\cdot\phi dx\right|\leq C\epsilon \|u^{(0)} \|_{H^2(D)}\|\Phi^\epsilon\|_{H^1(D)},
\end{equation*}
for some constant $C>0$ independent of $\epsilon$. Finally from the standard estimate for the elliptic system (see \cite{mclean2000strongly} for instance), 
\begin{equation*}
\|\Phi^\epsilon \|_{H^1(D)}\leq C\|\phi \|_{H^{-1}(B_R)},
\end{equation*}
where $C>0$ is a constant depends on the coefficients and $R$. Finally the proof follows from the duality argument.
\end{proof}

Now we can have the following theorem.
\begin{theorem}
Let $u^\epsilon$ and $u^{\iss 0}$ be the solutions of \eqref{Homogenization Equation} and \eqref{0- Homogenized Equation}, respectively. Let $u^{\iss 1}$ be the bulk correction given by \eqref{bulk correction} in $D$ with $u^{\iss 1}=0$ in $\mathbb R^d\setminus \overline{D}$, then we have 
\begin{equation*}
\|u^\epsilon -u^{\iss 0}-\epsilon u^{\iss 1} \|_{H^1(D)}+\|u^\epsilon - u^{\iss 0}\|_{H^1(B_R\setminus \overline{D})}\leq C_R\epsilon ^{1/2}\|u^{\iss 0}\|_{H^2(D)},
\end{equation*}
for some constant $C_R>0$ independent of $\epsilon$.
\end{theorem}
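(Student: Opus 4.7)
The plan is to combine the sharp $H^1$ estimate from Lemma \ref{H1 convergence lemma} with an $\epsilon^{-1/2}$ bound on the boundary corrector $\widetilde\varphi^\epsilon$. Since $u^{\iss 1}=0$ outside $\overline{D}$, the triangle inequality gives
$$\|u^\epsilon - u^{\iss 0} - \epsilon u^{\iss 1}\|_{H^1(D)} + \|u^\epsilon - u^{\iss 0}\|_{H^1(B_R \setminus \overline{D})} \leq \|u^\epsilon - (u^{\iss 0} + \epsilon u^{\iss 1} + \epsilon \widetilde\varphi^\epsilon)\|_{H^1(B_R)} + \epsilon \|\widetilde\varphi^\epsilon\|_{H^1(B_R)}.$$
The first term on the right-hand side is $O(\epsilon \|u^{\iss 0}\|_{H^2(D)})$ by Lemma \ref{H1 convergence lemma}, better than what the theorem claims. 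Hence the proof reduces to establishing
$$\|\widetilde\varphi^\epsilon\|_{H^1(B_R)} \leq C_R \epsilon^{-1/2} \|u^{\iss 0}\|_{H^2(D)}.$$

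To obtain this bound I would test the transmission problem \eqref{boundary corrector} against $\widetilde\varphi^\epsilon$ itself, using strong convexity \eqref{strong convexity}, a Korn-type inequality, and handling the far-field contributions via the Dirichlet-to-Neumann operator from Appendix \ref{Appd_DtN}. This yields an abstract energy inequality of the form
$$\|\widetilde\varphi^\epsilon\|_{H^1(B_R)} \leq C_R \Big( \|u^{\iss 1}\|_{H^{1/2}(\partial D)} + \Big\|\Big(\tfrac{v^{\iss 0}-\overline v^{\iss 0}}{\epsilon} + \widetilde v^{\iss 1}\Big)\cdot \nu \Big\|_{H^{-1/2}(\partial D)}\Big).$$
For the displacement jump, $u^{\iss 1}|_{\partial D} = -\chi(\cdot/\epsilon)\nabla u^{\iss 0}$ obeys $\|u^{\iss 1}\|_{L^2(\partial D)} \leq C \|u^{\iss 0}\|_{H^2(D)}$ and $\|u^{\iss 1}\|_{H^1(\partial D)} \leq C \epsilon^{-1} \|u^{\iss 0}\|_{H^2(D)}$, the latter coming from the chain rule applied to $\chi(\cdot/\epsilon)$; interpolation then delivers the decisive factor $\epsilon^{-1/2}$.

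The genuine obstacle is the traction jump, because $(v^{\iss 0}-\overline v^{\iss 0})/\epsilon$ naively has $L^2(\partial D)$ norm of order $\epsilon^{-1}$. Here the rotational representation $v^{\iss 0}-\overline v^{\iss 0} = \mathrm{rot}_y q$ from \eqref{rotyq} becomes essential. Writing
$$\tfrac{1}{\epsilon}\mathrm{rot}_y q(x,x/\epsilon) = \mathrm{rot}\big[q(x,x/\epsilon)\big] - \mathrm{rot}_x q(x,x/\epsilon),$$
the singular contribution is recast as the rotation of an $O(1)$ field. When paired with an $H^{1/2}$ test function $\phi$ on $\partial D$, the identity $\mathrm{rot} Q \cdot \nu = \mathrm{div}_\Gamma(\nu \times Q)$ in three dimensions (and its two-dimensional analogue via the $90^\circ$ gradient rotation) lets one integrate by parts tangentially, transferring derivatives onto $\phi$ and producing an $H^{-1/2}(\partial D)$ bound independent of $\epsilon$. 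Combined with the $L^2$-bounded contributions from $\mathrm{rot}_x q$ and $\omega^2(C\!:\!\nabla_y \gamma)u^{\iss 0}$, this controls the traction jump and delivers the claimed $\epsilon^{-1/2}$ estimate on $\widetilde\varphi^\epsilon$. Multiplying by $\epsilon$ yields the announced $\epsilon^{1/2}$ convergence rate.
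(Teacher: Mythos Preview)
Your proposal is essentially correct and follows the same route as the paper: invoke the well-posedness estimate for the transmission problem \eqref{boundary corrector} (the paper cites Theorem \ref{estimate on transmission problem} rather than rederiving it), bound the displacement jump $\|u^{\iss 1}\|_{H^{1/2}(\partial D)}$ by $C\epsilon^{-1/2}\|u^{\iss 0}\|_{H^2(D)}$, exploit the $\mathrm{rot}\,q$ representation to control the traction jump, and then combine with Lemma \ref{H1 convergence lemma}.

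One inaccuracy worth flagging: the tangential integration by parts you describe transfers a \emph{full} derivative onto the test function, so it yields an $H^{-1}(\partial D)$ bound of order $O(1)$ on $(\mathrm{rot}\,q)\cdot\nu$, not an $H^{-1/2}(\partial D)$ bound independent of $\epsilon$. The paper closes this gap exactly as one would expect: it pairs that $H^{-1}$ estimate with the straightforward $L^2(\partial D)$ bound of order $\epsilon^{-1}$ (which you yourself noted) and interpolates to get $H^{-1/2}$ of order $\epsilon^{-1/2}$. Since the displacement-jump term already sits at $\epsilon^{-1/2}$, your final bound on $\|\widetilde\varphi^\epsilon\|_{H^1}$ is unaffected; just replace ``independent of $\epsilon$'' by ``of order $\epsilon^{-1/2}$ after interpolation'' and the argument is complete and matches the paper.
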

\begin{proof}
From the elastic transmission problem \eqref{boundary corrector}, the function $\widetilde\varphi^\epsilon$ satisfies the following $H^1$ estimate (see Theorem \ref{estimate on transmission problem} in Appendix Section \ref{Section 5}),  
\begin{align}
&\notag \|\widetilde\varphi^\epsilon \|_{H^1(D)}+\|\widetilde\varphi^\epsilon\|_{H^1(B_R\setminus \overline{D})}\\
\leq &C_R \Big(\|u^{\iss 1}\|_{H^{1/2}(\partial D)}+\left\|\left(\dfrac{\overline{v}^{\iss 0}-v^{\iss 0}}{\epsilon}-\widetilde{v}^{\iss 1}\right)\cdot \nu \Big\|_{H^{-1/2}(\partial D)}\right),\label{H1 estimate}
\end{align}
for some constant $C_R>0$ independent of $\epsilon $. Recall that $u^{\iss 1}_\ell (x,\dfrac{x}{\epsilon})= \chi_{\ell mn}(\dfrac{x}{\epsilon})  \dfrac{\partial u^{\iss 0}_n}{\partial x_m}(x)$, by a standard argument of the trace theorem and cutoff techniques in the homogenization theorem (see \cite[Chapter 7]{cioranescu2000introduction} for instance), we have 
\begin{align}
\| u^{\iss 1}\|_{H^{1/2}(\partial D)}\leq C\epsilon ^{-1/2}\|u^{\iss 0} \|_{H^2(D)}, \label{u_1 estimate}
\end{align}
where $C>0$ is a constant independent of $\epsilon$. From  equations \eqref{boundary corrector} and \eqref{new transmission condition}, 
\begin{eqnarray*}
\left
( \dfrac{\overline{v}^{\iss 0}-v^{\iss 0}}{\epsilon}-\widetilde{v}^{\iss 1}\right)\cdot \nu  = \left(\text{rot}~q+\omega ^2 C(y)\nabla \gamma(y)u^{\iss 0}\right)\cdot \nu,
\end{eqnarray*}
where $O(\frac{1}{\epsilon})$ term is absorbed.

Now let $\phi$ be any arbitrary smooth vector-valued test function, then when $d=2$, we have 
\begin{align*}
\int_{\partial D}\text{rot}(q_j)\cdot \nu \phi\,dS=-\int _{\partial D}q_j\ \text{rot}(\phi)\cdot \nu \,dS,\text{ for }j=1,2,
\end{align*}
and when $d=3$, 
\begin{align*}
\int_{\partial D}\text{rot}(q_j)\cdot \nu \phi\, dS=-\int _{\partial D}(q_j\times \nabla \phi)\cdot \nu \,dS,\text{ for }j=1,2,3.
\end{align*}
From the governing equation \eqref{rotyq} of $q_j(y)$ for $1\leq j\leq d$, we can see that the $H^1_{per}(Y)$-norm of $q=(q_j)_{1\leq j\leq d}$ is bounded by $\|u^{\iss 0}\|_{H^2(D)}$. From the trace theorem we can obtain
\begin{eqnarray*}
\| q_j \|_{L^2(\partial D)} \le C \|u^{\iss 0}\|_{H^2(D)} \quad \mbox{and} \quad \| q_j \|_{H^1(\partial D)} \le C\epsilon^{-1} \|u^{\iss 0}\|_{H^2(D)},
\end{eqnarray*}
where $C>0$ is a constant independent of $\epsilon$. Then from the above inequalities and the duality argument, 
\begin{align}\label{H minus one boundary estimate}
\Big\|\Big(\dfrac{v^{\iss 0}-\overline{v}^{\iss 0}}{\epsilon}+\widetilde{v}^{\iss 1}\Big)\cdot \nu \Big\|_{H^{-1}(\partial D)}\leq C\|u^{\iss 0} \|_{H^2(D)},
\end{align}
and 
\begin{align}\label{L2 boundary estimate}
\Big\|\Big(\dfrac{v^{\iss 0}-\overline{v}^{\iss 0}}{\epsilon}+\widetilde{v}^{\iss 1}\Big)\cdot \nu \Big\|_{L^2(\partial D)}\leq C\epsilon ^{-1}\|u^{\iss 0}\|_{H^2(D)},
\end{align}
where $C>0$ is a constant independent of $\epsilon$. Therefore by interpolating between \eqref{H minus one boundary estimate} and  \eqref{L2 boundary estimate}, and combining with \eqref{H1 estimate},  \eqref{u_1 estimate}, we obtain
\begin{equation}\label{estimate for boundary corrector}
\|\widetilde\varphi^\epsilon \|_{H^1(D)}+\|\widetilde\varphi^\epsilon\|_{H^1(B_R\setminus \overline{D})}\leq C_R\epsilon ^{-1/2}\|u^{\iss 0}\|_{H^2(D)},
\end{equation}
for some constant $C_R>0$ independent of $\epsilon$. Finally from \eqref{error estimate 1} and \eqref{estimate for boundary corrector} we obtain
\begin{equation*}
\|u^\epsilon -u^{\iss 0}-\epsilon u^{\iss 1} \|_{H^1(D)}+\|u^\epsilon - u^{\iss 0}\|_{H^1(B_R\setminus \overline{D})}\leq C_R\epsilon ^{1/2}\|u^{\iss 0}\|_{H^2(D)},
\end{equation*}
where $C_R>0$ is some constant independent of $\epsilon $ and this completes the proof. This also proves \eqref{L^2 rates convergence 1} in Theorem \ref{MAIN THEOREM}.
\end{proof}

Consider another boundary corrector function as follows. Let $\varphi^\epsilon$ be a boundary corrector, which solves the following equation 
\begin{align}\label{varphi_epsilon}
\begin{cases}
\nabla \cdot \big(C(\dfrac{x}{\epsilon})\nabla \varphi^\epsilon\big)+\omega^2\rho(\dfrac{x}{\epsilon})\varphi^\epsilon =0 &\text{ in }D,\\
\Delta^* \varphi^\epsilon+\omega ^2 \varphi^\epsilon =0 &\text{ in }\mathbb R^d \setminus \overline{D},\\
(\varphi^\epsilon)^+-(\varphi^\epsilon)^-=u^{\iss 1} &\text{ on }\partial D,\\
(T_\nu \varphi^\epsilon)^+-\big(C(\dfrac{x}{\epsilon})\nabla \varphi^\epsilon \cdot \nu\big)^-=\big(\dfrac{v^{\iss 0}-\overline{v}^{\iss 0}}{\epsilon}+v^{\iss 1}\big)\cdot \nu &\text{ on }\partial D,
\end{cases}
\end{align} 
where $v^{\iss 1}$ is the function given in the asymptotic expansion \eqref{asymptotic expansions}. Here we remark that $v^{\iss 1}$ might not be the same function as $\widetilde{v}^{\iss 1}$.
\begin{lemma}
Let $B_R$ be an arbitrary ball in $\mathbb R^d$ such that $D\subset B_R$. Let $u^{\iss 0}\in H^2(B_R)$ be the solution of \eqref{0- Homogenized Equation} and $\varphi^\epsilon$ be the solution of \eqref{varphi_epsilon}, then we have 
\begin{equation*}\label{L^2 error bound}
\|\varphi^\epsilon \|_{L^2 (B_R)}\leq C_R\|u^{\iss 0} \|_{H^2(D)},
\end{equation*}
for some constant $C_R>0$ independent of $\epsilon$.

\end{lemma}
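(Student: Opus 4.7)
The plan is to establish the $L^2$ bound via an Aubin--Nitsche type duality argument that mirrors the proof of Lemma \ref{H1 convergence lemma} but is refined so that the singular $\epsilon^{-1/2}$ behavior visible in the $H^1$ estimate \eqref{estimate for boundary corrector} is absorbed by an extra integration by parts on $\partial D$. For an arbitrary test vector field $\phi\in L^2(B_R)$, I would let $\Phi^\epsilon \in H^1_{\mathrm{loc}}(\mathbb R^d)$ solve the zero-jump transmission problem \eqref{corrector with test function} with source $\phi$ and the Kupradze radiation condition, so that Theorem \ref{estimate on transmission problem} yields $\|\Phi^\epsilon\|_{H^1(B_R)}\le C_R\|\phi\|_{L^2(B_R)}$ uniformly in $\epsilon$. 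It then suffices to bound $|\int_{B_R}\varphi^\epsilon\cdot\phi\,dx|$ by $C_R\|u^{\iss 0}\|_{H^2(D)}\|\phi\|_{L^2(B_R)}$ and invoke duality.

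Applying Betti's identity separately in $D$ and in $B_R\setminus\overline D$ to the pair $(\varphi^\epsilon,\Phi^\epsilon)$, and using that $\Phi^\epsilon$ has no jumps in its Dirichlet or conormal traces across $\partial D$ while $\varphi^\epsilon$ carries the jumps prescribed in \eqref{varphi_epsilon}, the volume terms containing $\phi$ on the left and the equations satisfied by $\varphi^\epsilon$ on the right cancel, while the contributions on $\partial B_R$ vanish by the Kupradze radiation condition (as in Appendix \ref{Appd_DtN}). The identity then collapses to
\begin{equation*}
\int_{B_R}\varphi^\epsilon\cdot\phi\,dx=\int_{\partial D}\Bigl(\tfrac{v^{\iss 0}-\overline{v}^{\iss 0}}{\epsilon}+v^{\iss 1}\Bigr)\cdot\nu\cdot\Phi^\epsilon\,dS-\int_{\partial D}u^{\iss 1}\cdot (T_\nu\Phi^\epsilon)^+\,dS,
\end{equation*}
so the task reduces to controlling these two surface integrals uniformly in $\epsilon$.

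For the first surface integral, I would take $v^{\iss 1}$ to be the particular representative $\widetilde{v}^{\iss 1}$ introduced in \eqref{equation tilde v1}, so that the bracket equals $\mathrm{rot}\,q+\omega^2(C{:}\nabla\gamma)u^{\iss 0}$ with $q$ uniformly controlled in $H^1_{\mathrm{per}}(Y)$ by $\|u^{\iss 0}\|_{H^2(D)}$; a tangential Stokes-type integration by parts on $\partial D$ transfers the $\mathrm{rot}$ onto $\Phi^\epsilon$, and combined with the trace theorem this yields a bound of the form $C\|u^{\iss 0}\|_{H^2(D)}\|\Phi^\epsilon\|_{H^1(D)}$. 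The main obstacle will be the Dirichlet-jump integral $\int_{\partial D}u^{\iss 1}\cdot(T_\nu\Phi^\epsilon)^+\,dS$, because $u^{\iss 1}(x)=-\chi(x/\epsilon)\nabla u^{\iss 0}$ oscillates at scale $\epsilon$ while $(T_\nu\Phi^\epsilon)^+$ is only controlled in $H^{-1/2}(\partial D)$ uniformly in $\epsilon$. To overcome this I would exploit the zero-mean property $\int_Y\chi\,dy=0$ to introduce a periodic bounded potential $\Xi$ satisfying $\chi_{\ell mn}(y)=\partial_{y_k}\Xi_{k\ell mn}(y)$, rewrite $u^{\iss 1}(x)=\epsilon\,\nabla_x\!\cdot\!\bigl[\Xi(x/\epsilon)\nabla u^{\iss 0}\bigr]-\epsilon\,\Xi(x/\epsilon){:}\nabla^2 u^{\iss 0}$, and apply Green's identity in the exterior of $D$ to convert the singular surface pairing into a volume pairing of $\nabla\Phi^\epsilon$ against $O(1)$ quantities. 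Assembling both estimates and invoking duality then yields the desired $L^2$ inequality.
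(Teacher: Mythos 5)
Your duality skeleton is the same as the paper's: test against $\phi$, solve the zero-jump transmission problem \eqref{corrector with test function} for $\Phi^\epsilon$, and reduce everything to the two surface integrals $\int_{\partial D}\bigl(\tfrac{v^{\iss 0}-\overline v^{\iss 0}}{\epsilon}+\widetilde v^{\iss 1}\bigr)\cdot\nu\cdot(\Phi^\epsilon)^+dS$ and $\int_{\partial D}u^{\iss 1}\cdot\bigl((\nabla\Phi^\epsilon)^+\cdot\nu\bigr)dS$. Up to that point you agree with the paper. The gap is in how you estimate these two integrals: you pair the $\epsilon$-oscillating boundary data directly against $\Phi^\epsilon$, which is only bounded in $H^1(B_R)$ uniformly in $\epsilon$. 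After your tangential Stokes identity the first integral becomes (schematically) $\int_{\partial D}q(x,x/\epsilon)\,\partial_\tau\Phi^\epsilon\,dS$; since $\partial_\tau\Phi^\epsilon$ is controlled only in $H^{-1/2}(\partial D)$ uniformly, the pairing needs $\|q(\cdot,\cdot/\epsilon)\|_{H^{1/2}(\partial D)}$, which grows like $\epsilon^{-1/2}$ (this is exactly the interpolation loss that produced \eqref{estimate for boundary corrector}); the alternative, $\nabla\Phi^\epsilon\in L^2(\partial D)$ uniformly, is not available because $\Phi^\epsilon$ has no uniform regularity beyond $H^1$ up to the interface. So your claimed bound $C\|u^{\iss 0}\|_{H^2(D)}\|\Phi^\epsilon\|_{H^1(D)}$ does not follow, and the same obstruction defeats your treatment of the Dirichlet-jump term: writing $\chi=\nabla_y\cdot\Xi$ and passing to an exterior volume integral either reintroduces $O(\epsilon^{-1})$ gradients of the oscillating extension or requires tangential derivatives of the traction $(T_\nu\Phi^\epsilon)^+$ on $\partial D$, i.e.\ second derivatives of $\Phi^\epsilon$ at the boundary, which are not uniformly controlled. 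In short, the $\epsilon^{-1/2}$ loss is not removed by your plan; it is merely relocated.

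The missing idea, which is how the paper closes the argument, is to homogenize the dual solution itself: apply Lemma \ref{H1 convergence lemma} to $\Phi^\epsilon$, writing $\Phi^\epsilon=\Phi^{\iss 0}+\epsilon\Phi^{\iss 1}+\epsilon\Psi^\epsilon+O(\epsilon)$ in $H^1$, together with the $O(\epsilon)$ control \eqref{estimate 1} of the exterior conormal trace of the remainder. The oscillating surface data is then paired against the $\epsilon$-independent $\Phi^{\iss 0}$, which enjoys $\|\Phi^{\iss 0}\|_{H^2(D)}\leq C\|\phi\|_{L^2(B_R)}$, so that $\nabla\Phi^{\iss 0}$ has a genuine $L^2(\partial D)$ trace and bounds like $\|q\|_{H^{-1}(\partial D)}$, $\|u^{\iss 1}\|_{L^2(\partial D)}\leq C\|u^{\iss 0}\|_{H^2(D)}$ suffice; the terms involving $\epsilon\Psi^\epsilon$ and the $O(\epsilon)$ remainder are harmless because the explicit factor $\epsilon$ compensates their $\epsilon^{-1/2}$ and $\epsilon^{-1}$ growth. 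Without this second homogenization step (or some substitute giving uniform extra regularity of the dual problem), your argument only reproduces the $\epsilon^{-1/2}$ bound already contained in \eqref{estimate for boundary corrector} and cannot yield the uniform $L^2$ estimate. A further minor point: the paper proves the bound for $\widetilde\varphi^\epsilon$ and then transfers it to $\varphi^\epsilon$ by comparing the two transmission problems \eqref{boundary corrector} and \eqref{varphi_epsilon}; if you work directly with $\varphi^\epsilon$ you must justify replacing $v^{\iss 1}$ by the representative $\widetilde v^{\iss 1}$, which is the same comparison in disguise.
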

\begin{proof}
Let us consider a test function $\phi\in L^2(B_R)$ such that $\phi\equiv0$ outside $B_R$ and let $\Phi^\epsilon$ be the solution to the transmission problem \eqref{corrector with test function}. We begin with the estimate of $\widetilde \varphi ^\epsilon $. It is similar to the proof of Lemma \ref{H1 convergence lemma} and indeed we can obtain
\begin{align}\notag
\int _{B_R}\widetilde\varphi^\epsilon\cdot \phi dx=&-\int _D \big( C(\frac{x}{\epsilon})\nabla \widetilde\varphi^\epsilon \big):\nabla \Phi^\epsilon dx+\omega ^2 \int _D\rho (\frac{x}{\epsilon})\widetilde\varphi^\epsilon \cdot \Phi^\epsilon dx\\
&\notag +\int_{\partial D}\Big( \big(C(\frac{x}{\epsilon})\nabla \Phi^\epsilon \big)^-\cdot \nu\Big) \cdot (\widetilde\varphi^\epsilon)^- dS\\
&+\int _{\partial D}\Big(\big(\nabla \widetilde\varphi^\epsilon\big) ^+\cdot \nu\Big)\cdot (\Phi^\epsilon)^+dS-\int_{\partial D}\Big( \big(\nabla \Phi^\epsilon\big)^+\cdot \nu \Big)\cdot (\widetilde\varphi^\epsilon)^+ dS.
\end{align}
By using the integration by parts in $D$, the equation for $\widetilde\varphi^\epsilon$, Kupradze radiation condition \eqref{Kupradze radiation condition} for $\widetilde\varphi^\epsilon$ and continuous transmission boundary conditions for $\Phi^\epsilon$ (see \eqref{corrector with test function} again), we can derive
\begin{align}
\notag\int_{B_R}\widetilde\varphi^\epsilon \cdot \phi dx=&\int_{\partial D}\Big( \big(\dfrac{v^{\iss 0}-\overline{v}^{\iss 0}}{\epsilon}+\widetilde{v}^{\iss 1}\big)\cdot \nu\Big) \cdot (\Phi ^\epsilon)^+ dS-\int_{\partial D}u^{\iss 1}\cdot\Big( \big(\nabla \Phi^\epsilon\big)^+\cdot \nu \Big)dS\\
=&\notag \int_{\partial D}\bigg( \Big (\big(\text{rot} q \big)_{ij}+\omega ^2 C_{ijk\ell}(y)\dfrac{\partial \gamma_{m\ell}}{\partial y_k}(y) u^{\iss 0}_m \Big)\nu_i \bigg) \cdot (\Phi^\epsilon_j)^+dS\\
&+\int _{\partial D}\chi _{\ell mn}(\frac{x}{\epsilon}) \dfrac{\partial u^{\iss 0}_n }{\partial x_m}\Big( \big(\nabla \Phi^\epsilon\big)^+\cdot\nu\Big)_\ell \,dS.
\end{align}  

Let  $\Phi^{\iss 0}$ be the solution of the leading-order homogenized transmission problem with respect to $\Phi^\epsilon$,  and $\Phi^{\iss 1}_\ell (x,y)=\chi _{\ell mn} (y) \dfrac{\partial \Phi^{\iss 0}_n }{\partial x_m}(x)$ be the first order corrector term corresponding to $\Phi^\epsilon$. Furthermore let $\Psi^\epsilon$ be the bulk corrector of $\Phi^\epsilon$ as the role of  $\widetilde\varphi^\epsilon$ playing for $u^\epsilon$. 
Following the same proof of Lemma \ref{H1 convergence lemma}, we can derive that  
\begin{equation*}\label{Another H1 estimate}
\|\Phi^\epsilon -(\Phi^{\iss 0}+\epsilon\Phi^{\iss 1}+\epsilon \Psi^\epsilon)\|_{H^1(B_R)}\leq C_R\epsilon \|\Phi^{\iss 0}\|_{H^2(D)},
\end{equation*}
where  $C_R>0$ is a constant independent of $\epsilon$ and $\Phi^{\iss 0}$. Since the bulk correction of $\Phi^\epsilon$ is zero outside $\overline{D}$, then in particular we have 
\begin{equation*}
\|\Phi^\epsilon -(\Phi^{\iss 0}+\epsilon \Psi^\epsilon)\|_{H^1(B_R\setminus \overline{D})}\leq C_R\epsilon \|\Phi^{\iss 0} \|_{H^2(D)}.
\end{equation*}
From the definitions of $\Phi^\epsilon$, $\Phi^{\iss 0}$ and $\Psi^\epsilon$ in $B_R\setminus \overline{D}$, we know that $\nabla \cdot (C^{\iss 0}\nabla \Phi^\epsilon)$, $\nabla \cdot (C^{\iss 0}\nabla \Phi^{\iss 0})$ and $\nabla \cdot (C^{\iss 0} \nabla \Psi^\epsilon)$ belong to $L^2(B_R\setminus \overline{D})$, then from Appendix Section \ref{Section 5} 
\begin{equation}\label{estimate 1}
\Big\|\nabla\Big( \big(\Phi^\epsilon) ^+-(\Phi^{\iss 0} \big)^+-\epsilon (\Psi^\epsilon)^+\Big)\cdot \nu \Big\|_{H^{-1/2}(\partial D)}\leq C_R\epsilon \|\Phi^{\iss 0}\|_{H^2(D)}.
\end{equation}

Therefore we can now obtain
\begin{align}
&\notag \left| \int_{\partial D}\bigg( \Big (\big(\text{rot} q \big)_{ij}+\omega ^2 C_{ijk\ell}(y)\dfrac{\partial \gamma_{m\ell}}{\partial y_k}(y) u^{\iss 0}_m \Big)\nu_i \bigg) \cdot (\Phi^{\iss 0}_j)^+dS \right|\\
\notag\leq &\ C\|q\|_{H^{-1}(\partial D)}\|\nabla \Phi^{\iss 0} \|_{H^1(\partial D)}+\omega^2\left|\int_{\partial D} C_{ijk\ell}(y)\dfrac{\partial \gamma_{m\ell}}{\partial y_k}(y) u^{\iss 0}_m  \nu_i  ~ (\Phi^{\iss 0}_j)^+dS\right|\\
\leq &\label{estimate 2}\  C\|u^{\iss 0}\|_{H^2(D)}\|\Phi^{\iss 0}\|_{H^2(D)},
\end{align}
where we have used $\|q_j\|_{H^{-1}(\partial D)}\leq C\|u^{\iss 0}\|_{H^2(D)}$ with the constant $C>0$ independent of $\epsilon$ (by \eqref{rotyq}) and $C_{ijk\ell}\dfrac{\partial \gamma _{m\ell}}{\partial y_k}$ is bounded in $H^{1/2}(\partial D)$ independent of $\epsilon$ for all $1\leq i,j,m\leq d$ (see \eqref{equation gamma}). We also know that 
\begin{equation}\notag
\|u^{\iss 1}\|_{L^2(\partial D)}\leq C\|u^{\iss 0}\|_{H^2(D)},
\end{equation}
for some constant $C>0$ independent of $\epsilon$ and hence, 
\begin{align}
\notag\left|\int_{\partial D}\left((\nabla \Phi^{(0)})^+\cdot\nu \right)\cdot u^{\iss 1} dS\right|\leq &\ C\|u^{\iss 1}\|_{L^2(\partial D)}\|\nabla \Phi^{\iss 0}\|_{L^2(\partial D)}\\
\leq & \ C\|u^{\iss 0}\|_{H^2(D)}\|\Phi^{\iss 0}\|_{H^2(D)}.
\end{align}\label{estimate 3}
To proceed, we can use similar arguments as before to get 
\begin{equation}\notag
\|\Psi^\epsilon \|_{H^{1/2}(\partial D)}\leq C\epsilon ^{-1/2}\|\Phi^{\iss 0}\|_{H^2(D)},
\end{equation}
and 
\begin{equation}\notag
\left\|\dfrac{v^{\iss 0}-\overline{v}^{\iss 0}}{\epsilon}+\widetilde v^{\iss 1}\right\|_{H^{-1/2}(\partial D)}\leq C\epsilon ^{-1/2}\|u^{\iss 0}\|_{H^2(D)},
\end{equation}
which implies that 
\begin{equation}
\left|\int_{\partial D}\left(\dfrac{v^{\iss 0}-\overline{v}^{\iss 0}}{\epsilon}+\widetilde{v}^{\iss 1}\right)\cdot \epsilon\Psi_\epsilon dS \right|\leq C\|u^{\iss 0}\|_{H^2(D)}\|\Phi^{\iss 0}\|_{H^2(D)},
\end{equation}\label{estimate 4}
where $C>0$ is a constant independent of $\epsilon$.
Similar arguments give 
\begin{equation}
\epsilon\left|\int_{\partial D}u^{\iss 1}\cdot((\nabla \Psi^\epsilon)^+ \cdot\nu )dS \right|\leq C\|u^{\iss 0}\|_{H^2(D)}\|\Phi^{\iss 0}\|_{H^2(D)},
\end{equation}\label{estimate 5}
for some constant $C>0$ independent of $\epsilon$, where we have utilized the fact that  
\begin{equation}\notag
\|u^{\iss 1}\|_{H^{1/2}(\partial D)}=\left\|\chi(\frac{x}{\epsilon})\nabla u^{\iss 0}\right\|_{H^{1/2}(\partial D)}\leq C\epsilon^{-1/2}\|u^{\iss 0}\|_{H^2(D)}.
\end{equation}

Finally from \eqref{estimate for boundary corrector} we can obtain that 
\begin{align}
&\notag\epsilon\left|- \int_D\left(C(\frac{x}{\epsilon})\nabla\widetilde \varphi^\epsilon\right):\nabla \Psi^\epsilon dx+\omega ^2 \int_D\rho(\dfrac{x}{\epsilon})\widetilde\varphi^\epsilon\cdot \Psi^\epsilon dx+\int_{\partial D}\left((C(\dfrac{x}{\epsilon})(\nabla \Psi^\epsilon)^-)\cdot \nu \right)\cdot\widetilde\varphi^\epsilon dS \right. \\
&\notag\left.+\int_{\partial D}((\nabla \widetilde\varphi^\epsilon)^+\cdot\nu )\cdot (\Psi^\epsilon)^+dS-\int_{\partial D}((\nabla \Psi^\epsilon)^+\cdot \nu)\cdot (\widetilde\varphi^\epsilon)^+ dS \right|\\
\leq\  &\label{estimate 6} C\epsilon \|\varphi^\epsilon\|_{H^1(B_R)}\|\Psi^\epsilon\|_{H^1(B_R)}\ \leq\ C\|u^{\iss 0}\|_{H^2(D)}\|\Phi^{\iss 0}\|_{H^2(D)},
\end{align}
for some constants $C>0$ independent of $\epsilon$, $u^{\iss 0}$ and $\Phi^{\iss 0}$. Then by combining \eqref{estimate 2}-\eqref{estimate 6} and the remainder term of \eqref{estimate 1} if of order $\epsilon$ 
\begin{equation*}
\|\widetilde \varphi^\epsilon \|_{L^2(B_R)} \|\phi\|_{L^2(B_R)} \leq C\|u^{\iss 0} \|_{H^2(D)} \|\Phi^{\iss 0}\|_{H^2(D)}.
\end{equation*}
Furthermore since $\|\Phi^{\iss 0}\|_{H^2(D)}\leq C\|\phi\|_{L^2(B_R)}$, then there exists a constant $C>0$ independent of $\epsilon$ such that 
\begin{equation}\label{L^2 bound of the original boundary corrector}
\|\widetilde \varphi^\epsilon \|_{L^2(B_R)}\leq C\|u^{\iss 0} \|_{H^2(D)}.
\end{equation}
Finally since the difference between $\widetilde \varphi^\epsilon$ and $\varphi^\epsilon$ only appears in the jump conormal derivative across the boundary $\partial D$ (see equations \eqref{boundary corrector} and \eqref{varphi_epsilon}), this proves the theorem.
\end{proof}
Now, we are ready to prove the rates of convergence of $\|u^\epsilon-u^{\iss 0}\|_{L^2(B_R)}$.

\begin{proof}[Proof of Theorem \ref{MAIN THEOREM}]
It is easy to see that 
\begin{equation*}
\label{bound of u1}\|u^{\iss 1}\|_{L^2(B_R)}= \|u^{\iss 1}\|_{L^2(D)}\leq C\|u^{\iss 0}\|_{H^2(D)},
\end{equation*}
by using the definition of $u^{\iss 1}$ and the smoothness of $\chi(y)$. From \eqref{error estimate 1} and \eqref{L^2 bound of the original boundary corrector}, one can see that 
\begin{align}
\notag\|u^\epsilon -u^{\iss 0}\|_{L^2(B_R)}&\leq C_R\epsilon \|u^{\iss 0} \|_{H^2(D)}+\epsilon\| u^{\iss 1}\|_{L^2(B_R)}+\epsilon\| \widetilde\varphi^\epsilon\|_{L^2(B_R)}\\
&\notag\leq C_R\epsilon \|u^{\iss 0} \|_{H^2(D)},
\end{align}
for some constant $C_R>0$ independent of $\epsilon$. This completes the proof.
\end{proof}

\section{Higher-order asymptotic analysis of the transmission problem}\label{Section 3}
There are recent interests on higher-order two-scale homogenization of wave propagation in periodic meida  \cite{allaire2016comparison, cakoni2016homogenization, chen2001dispersive, meng2017dynamic, wautier2015second}. In the case that the periodic structure was only supported in a bounded domain, contrary to the case that the periodic structure occupies $\R^d$, the boundary correctors played a role both in the leading-order and second-order homogenization as demonstrated in \cite{cakoni2016homogenization} for scalar wave equation. In this section we study the higher-order homogenization of the elastic scattering problem where the periodic media has bounded support.  
\subsection{Higher-order asymptotic expansion}

Recall in asymptotic expansion (\ref{expaned equation}) the first order term $u^{\iss 1}$ was given by (\ref{bulk correction}), in this section we consider a more general form of $u^{\iss 1}=u^{\iss 1}(x,y)$ given by 
\begin{align}
u^{\iss 1}_{\ell}(x,y) = -\chi_{\ell mn}(y) \frac{\partial u^{\iss 0}_n}{\partial {x_m}}(x) + \widetilde{u}^{\iss 1}_{\ell}(x).\label{higher u_1 bulk}
\end{align}
From the ansatz we further obtain
\begin{eqnarray}
O(\epsilon) : && \label{high asymp 1} v^{\iss 1} - C(y) \big( \nabla_x u^{\iss 1}+  \nabla_y u^{\iss 2} \big) = 0,  \\
   &&\label{high asymp 2} \big( \nabla_x \cdot v^{\iss 1} + \nabla_y \cdot v^{\iss 2}) +  \omega^2 \rho(y) u^{\iss 1} = 0 .
\end{eqnarray}

Now we first derive a representation for $u_2$. Applying divergence $\nabla_y \cdot$ to equation \eqref{high asymp 1} and using \eqref{epsilon 1 v} yield 
\begin{align}
\nabla _y \cdot \big( C(y) \nabla_y u^{\iss 2} \big)+ \nabla_y \cdot \big(  C(y) \nabla_x u^{\iss 1} \big) = \nabla_y \cdot  v^{\iss 1} =-\nabla_x\cdot v^{\iss 0} -\omega ^2 \rho(y)u^{\iss 0}. \label{higher order asym 1}
\end{align}
From equations \eqref{higher u_1 bulk}, \eqref{higher order asym 1}, \eqref{standard v_0} and direct computations, we obtain the governing equation for $u^{\iss 2}$ 
\begin{align}
\frac{\partial }{\partial y_i} \Big( C_{ijk\ell}(y) \frac{ \partial u^{\iss 2}_\ell}{\partial y_k} \Big) =& \Big( -C_{ijk\ell} + C_{ij mn} \frac{\partial \chi_{nk\ell}}{\partial y_m} + \frac{\partial }{\partial y_m} \big(  \chi_{ni\ell} C_{mjkn} \big)  + \overline{C}_{ijk\ell} \Big) \frac{\partial^2 u^{\iss 0}_{\ell}}{\partial x_k \partial x_i} \nonumber \\
& -\frac{C_{ijk\ell}}{\partial y_i} \frac{\partial \widetilde{u}^{\iss 1}_{\ell}}{\partial x_k} + \omega^2 (\overline{\rho} - \rho) u^{\iss 0}_j. \label{eqn u_2 derive}
\end{align}
Let us set 
\begin{eqnarray*}
b_{i j k \ell} =-C_{ijk\ell} + C_{ij mn} \frac{\partial \chi_{nk\ell}}{\partial y_m} + \frac{\partial }{\partial y_m} \big(  \chi_{ni\ell} C_{mjkn} \big) ,
\end{eqnarray*}
and note that  $\int_Y{b}_{ij k \ell}\,dy = -\overline{C}_{ijk\ell}$. Besides, due to the symmetric properties of $C_{ijk\ell}$, we know that $\overline{b}_{ijk\ell}$ also has the major and minor symmetry.

Now we introduce higher-order cell functions $\chi_{ik \ell q}$ that is $Y$-periodic function and solves  
\begin{align}\label{higher order corrector}
\frac{\partial }{\partial y_{\alpha}} \left( C_{\alpha j \beta q }(y)  \frac{\partial \chi_{ i k \ell q } }{\partial y_{\beta}} \right)  = b_{ijk\ell} - \int_Y{b}_{ij k \ell}\,dy.
\end{align}
In addition with the help of the cell functions $\chi_{\ell m n}$ defined by \eqref{Cell problem} and  $ \gamma_{m\ell}$ defined by \eqref{equation gamma}, one can directly obtain from equation \eqref{eqn u_2 derive} that
\begin{align}
u^{\iss 2}_{p}= \chi_{ mnqp}  \frac{\partial^2 u^{\iss 0}_{q}}{\partial x_n \partial x_m} - \chi_{pmn} \frac{ \partial \widetilde{u}^{\iss 1}_{n} }{\partial x_m}+ \omega^2 \gamma_{mp}(y) u^{\iss 0}_m+\widetilde{u}^{\iss 2}_p(x),\text{ for }1\leq p\leq d, \label{high u_2}
\end{align}
where the function $\widetilde{u}^{\iss 2}_p$ will be determined later. It is not hard to see that $u^{\iss 2}$ is a solution of \eqref{high asymp 1} (due to $\nabla_y\widetilde{u}^{\iss 2}(x)=0$). From \eqref{high asymp 1}
\begin{align}
v^{\iss 1} =  C(y) \big( \nabla_x u^{\iss 1} +  \nabla_y u^{\iss 2} \big), \label{high v_1}
\end{align}
then applying the divergence $\nabla_x \cdot$ to \eqref{high v_1} and note that $u^{\iss 1}$ and $u^{\iss 2}$ are given by \eqref{higher u_1 bulk} and \eqref{high u_2} respectively,
\begin{align}
(\notag \nabla_x \cdot v^{\iss 1})_j =&\left( -C_{ijn\ell} \chi_{\ell m q} + C_{ijk\ell} \frac{\partial \chi_{mnq \ell}}{\partial y_k} \right) \frac{\partial^3  {u}^{\iss 0}_{q}}{\partial x_i \partial x_m \partial x_n}\\
&+ \omega ^2 C_{mjk\ell} \frac{\partial \gamma_{n \ell}}{\partial y_k}\dfrac{\partial u^{\iss 0}_n}{\partial x_m}  +\left(C_{ijk\ell }-C_{ijmn}\dfrac{\partial\chi_{nk\ell}}{\partial y_m}\right)\dfrac{\partial^2  \widetilde{u}^{\iss 1}_\ell}{\partial x_i \partial x_k }. \label{nabla_x v_1}
\end{align}
Applying the divergence $\nabla_x\cdot$ to \eqref{high v_1} and then averaging that over $Y$ yield 
\begin{eqnarray*}
\int_Y \nabla_x \cdot v^{\iss 1} dy - \int_Y \nabla_x \cdot \Big(  C(y) \big( \nabla_x u^{\iss 1} +  \nabla_y u^{\iss 2} \big) \Big) dy   =0,  
\end{eqnarray*}
note that $u^{\iss 1}$, $u^{\iss 2}$ and $\nabla_x \cdot v^{\iss 1}$ are given by \eqref{higher u_1 bulk}, \eqref{high u_2} and \eqref{nabla_x v_1} respectively, then a direct calculation yields
\begin{eqnarray}
 \overline{C}_{ijk\ell}  \frac{\partial^2 \widetilde{u}^{\iss 1}_{\ell}}{\partial x_i \partial x_k} + \omega^2 \overline{\rho} \widetilde{u}^{\iss 1}_j \
&=&\notag -\left( \frac{\partial^3  {u}^{\iss 0}_{q}}{\partial x_i \partial x_m \partial x_n}\right) \int_Y \left( -C_{ijn\ell} \chi_{\ell m q} + C_{ijk\ell} \frac{\partial \chi_{mnq \ell}}{\partial y_k}   \right)dy  \\
&&   -\omega^2 \frac{\partial {u}^{\iss 0}_{n}}{\partial x_m}  \int_Y\left( -\rho \chi_{jmn} + C_{mjk\ell} \dfrac{\partial \gamma_{n \ell} }{\partial y_k}  \right)dy. \label{homogenized tilde{u}1} 
\end{eqnarray}

We will show that the function $\widetilde u^{\iss 1}(x)$ in $u^{\iss 1}(x,y)$ cannot be chosen as zero in the elastic homogenization case, which is different from the scalar case \cite{cakoni2016homogenization} (the function $\widetilde u_1$ can be taken by zero in the scalar case). 
Via integration by parts and periodic conditions of $C_{ijk\ell}$, $\chi_{mnq\ell}$ and the cell problem \eqref{Cell problem}, one can see that 
\begin{align*}
\notag &\int _Y C_{ijk\ell} \frac{\partial }{\partial y_k} \chi_{mnq \ell}dy=-\int_Y \chi _{mnq\ell }\dfrac{\partial }{\partial y_k}C_{k\ell ij}dy\\
=&-\int_Y\chi _{mnq\ell} \dfrac{\partial }{\partial y_k}\left(C_{k\ell \alpha \beta }\dfrac{\partial }{\partial y_\alpha}\chi _{\beta ij}\right)dy=-\int_Y \chi _{\beta ij}  \dfrac{\partial }{\partial y_\alpha}\left(C_{k\ell \alpha \beta }\dfrac{\partial }{\partial y_k}\chi _{mnq\ell}\right)dy,
\end{align*}
where we have used integration by parts twice in the last equality. From the symmetric condition of the fourth-order tensor $C_{k\ell \alpha \beta}=C_{\alpha \beta k\ell}$ and equation \eqref{higher order corrector}, we can get 
\begin{align}
\notag&\int _Y C_{ijk\ell} \frac{\partial }{\partial y_k} \chi_{mnq \ell}dy=-\int_Y \chi _{\beta ij}  \dfrac{\partial }{\partial y_\alpha}\left(C_{\alpha \beta k\ell}\dfrac{\partial }{\partial y_k}\chi _{mnq\ell}\right)dy\\
=\notag &-\int_Y\chi_{\beta ij}(b_{m\beta n q}-\int_Y b_{m\beta n q}\,dy)dy\\
=\notag&-\int_Y\chi_{\beta ij} \left(-C_{m\beta nq}+C_{m\beta \alpha \gamma }\dfrac{\partial \chi _{\gamma nq}}{\partial y_\alpha}+\dfrac{\partial}{\partial y_\alpha}(\chi_{\gamma mq}C_{\alpha \beta n\gamma})+\overline{C}_{m\beta nq}\right)dy\\
=&\int_Y\chi _{\beta ij}C_{m\beta nq}dy-\int _Y\chi _{\beta ij}C_{m\beta \alpha \gamma} \dfrac{\partial \chi_{\gamma nq}}{\partial y_\alpha}dy+\int_Y \chi_{\gamma mq}C_{\alpha \beta n\gamma}\dfrac{\partial \chi _{\beta ij}}{\partial y_\alpha}dy,\label{tedious calculation 1}
\end{align}
where we used the integration by parts and $\int_Y \chi _{\beta ij}dy=0$ in the last equality. Therefore from \eqref{tedious calculation 1} we can obtain
\begin{align}
\notag&\left( \frac{\partial^3  {u}^{\iss 0}_{q}}{\partial x_i \partial x_m \partial x_n}\right)\left(\int _Y C_{ijk\ell} \frac{\partial }{\partial y_k} \chi_{mnq \ell}dy-\int_YC_{ijn\ell}\chi _{\ell mq}dy\right)\\
=&\notag\left( \frac{\partial^3  {u}^{\iss 0}_{q}}{\partial x_i \partial x_m \partial x_n}\right)
\int_Y\left(-\chi _{\ell mq}C_{ijn\ell}+\chi _{\beta ij}C_{m\beta nq}-\chi _{\beta ij}C_{m\beta \alpha \gamma} \dfrac{\partial \chi_{\gamma nq}}{\partial y_\alpha}+ \chi_{\gamma mq}C_{\alpha \beta n\gamma}\dfrac{\partial \chi _{\beta ij}}{\partial y_\alpha}\right)dy.
\end{align}
From the above representation, it is easy to see that the above quantity may not be zero, since  the index $q$ induces non-symmetry among the indices $q,i,m,n$ even though the indices $i,m,n$ can be interchanged freely.

For the second term in the right hand side of \eqref{homogenized tilde{u}1}, from equation \eqref{equation gamma} governing $\gamma$ and  integration by parts, we have 
\begin{align*}
&\int_Y C_{mjk\ell} \frac{\partial}{\partial y_k} \gamma_{n\ell}  dy = - \int_Y  \gamma_{n\ell} \frac{\partial}{\partial y_k} C_{mjk\ell}  dy = - \int_Y \gamma_{n\ell}  \frac{\partial}{\partial y_k} C_{k\ell m j}  dy \\
=& -\int_Y \frac{\partial}{\partial y_k} \big(  C_{k\ell p q} \frac{\partial}{\partial y_p} \chi_{qmj} \big) \gamma_{n\ell} dy = -\int_Y \frac{\partial}{\partial y_p} \big( C_{k\ell p q}  \frac{\partial}{\partial y_k} \gamma_{n\ell}  \big) \chi_{qmj} dy \\
=& -\int_Y \frac{\partial}{\partial y_p} \big( C_{p q k \ell}  \frac{\partial}{\partial y_k} \gamma_{n\ell}  \big) \chi_{qmj} dy = -\int_Y \chi_{qmj} (\overline{\rho} - \rho) \delta_{qn} dy = \int_Y \rho \chi_{nmj} dy,
\end{align*}
whereby
\begin{align}
\notag &  \int_Y\left( -\rho \chi_{jmn} + C_{mjk\ell} \frac{\partial }{\partial y_k} \gamma_{n \ell} \right)dy = \int_Y\left( -\rho \chi_{jmn} + \rho \chi_{nmj}  \right)dy.
\end{align}
The fact that $\chi_{jmn}$ has symmetries with respect to $m$ and $n$ may not yield the above quantity to be zero. Note that for the scalar case $d=1$ (see  \cite{cakoni2016homogenization}), the right hand side of \eqref{homogenized tilde{u}1} is zero, thus one can choose $\widetilde u^{\iss 1}=0$ without loss of generality in the scalar case, but for the elastic case, we simply keep $\widetilde u^{\iss 1}(x)$ in the following analysis.

Now let us seek for the a formula for $v^{\iss 2}$ and we denote such a function by $\widehat{v}^{\iss 2}$ in this section. In particular from equation \eqref{high asymp 2}
\begin{eqnarray*}
\nabla_y \cdot v^{\iss 2}= -\nabla_x \cdot v^{\iss 1} -  \omega^2 \rho(y) u^{\iss 1}.
\end{eqnarray*}
From equations  \eqref{higher u_1 bulk}, \eqref{high u_2} and \eqref{high v_1}, one can derive the equation for $v^\iss 2$
\begin{align}
(\nabla_y \cdot  v^{\iss 2})_j  =& \Big( C_{ijn\ell} \chi_{\ell m q} -C_{ij k\ell} \frac{\partial \chi_{mnq\ell}}{\partial y_k} \Big)   \frac{\partial^3 u^{\iss 0}_{q}}{\partial x_i \partial x_m \partial x_n}  + \omega^2  \Big( -C_{mjk\ell}  \frac{\partial \gamma_{n\ell}}{\partial y_k} + \rho \chi_{jmn}  \Big) \frac{\partial u^{\iss 0}_{n}}{\partial x_m} \nonumber \\
& + \Big( -C_{ijkq} + C_{ijmn}   \frac{\partial \chi_{nkq}}{\partial y_m} \Big) \frac{\partial^2 \widetilde{u}^{\iss 1}_{q}}{\partial x_k \partial x_i} -  \omega^2 \rho \widetilde{u}^{\iss 1}_j. \nonumber
\end{align}
From the governing equation \eqref{homogenized tilde{u}1} for $\tilde{u}_1$, one can further simplify the above equation to
\begin{align}
(\nabla_y \cdot  v^{\iss 2})_j=& \Big( 
  C_{ijn\ell} \chi_{\ell m q} -C_{ij k\ell} \frac{\partial \chi_{mnq\ell}}{\partial y_k}- \int_Y \big( C_{ijn\ell} \chi_{\ell m q} -C_{ij k\ell} \frac{\partial \chi_{mnq\ell}}{\partial y_k} \big) dy
 \Big) \frac{\partial^3 u^{\iss 0}_{q}}{\partial x_i \partial x_m \partial x_n} \nonumber\\
 & + \omega^2 \Big( -C_{mjk\ell}  \frac{\partial \gamma_{n\ell}}{\partial y_k} + \rho \chi_{jmn}  - \int_Y\big(-C_{mjk\ell}  \frac{\partial \gamma_{n\ell}}{\partial y_k} + \rho \chi_{jmn} \big)\,dy 
 \Big) \frac{\partial u^{\iss 0}_n}{\partial x_m} \nonumber \\
& + \Big( 
 -C_{ijkq} + C_{ijmn}   \frac{\partial \chi_{nkq}}{\partial y_m}  - \int_Y\big({-C_{ijkq} + C_{ijmn}   \frac{\partial \chi_{nkq}}{\partial y_m}}\big)\,dy 
 \Big) \frac{\partial^2 \widetilde{u}^{\iss 1}_{q}}{\partial x_k \partial x_i} \nonumber\\
 &  - (\rho -\overline{\rho}) \omega^2 \widetilde{u}^{\iss 1}_j. \nonumber
\end{align}
Now  we introduce the following higher-order cell function $\widehat{\chi}_{inmq\ell}$, $\widehat{\gamma}_{i k q \ell}$ and $\widehat{\gamma}_{\ell m n}$ that are $Y$-periodic functions and solve
\begin{eqnarray}\label{higher order corrector 5-th order}
\begin{cases}
\dfrac{\partial }{\partial y_{\alpha}} \left( C_{\alpha j \beta \ell }(y)  \dfrac{\partial \widehat{\chi}_{ i nmq\ell } }{\partial y_{\beta}} \right)  = d_{ijnmq} - \int_Y{d}_{ijnmq}\,dy, \\
\dfrac{\partial }{\partial y_{\alpha}} \left( C_{\alpha j \beta \ell }(y)  \dfrac{\partial \widehat{\gamma}_{ i k q \ell } }{\partial y_{\beta}} \right)  = -C_{ijkq} + C_{ijmn}   \dfrac{\partial \chi_{nkq}}{\partial y_m} - \int_Y\left({-C_{ijk\ell} + C_{ijmn}   \dfrac{\partial \chi_{nkq}}{\partial y_m}}\right)\,dy, \\
\dfrac{\partial }{\partial y_{\alpha}} \left( C_{\alpha j \beta \ell }(y)  \dfrac{\partial \widehat{\gamma}_{ \ell m n } }{\partial y_{\beta}} \right)  =  -C_{mjk\ell}  \dfrac{\partial \gamma_{n\ell}}{\partial y_k} + \rho \chi_{jmn} - \int_Y\left({-C_{mjk\ell}  \dfrac{\partial \gamma_{n\ell}}{\partial y_k} + \rho \chi_{jmn}}\right)\,dy,
\end{cases}
\end{eqnarray}
where $d_{ijnmq}$ is defined by
\begin{eqnarray}\label{dijnmq}
d_{i j nmq} =C_{ijn\ell} \chi_{\ell m q} - C_{ij k\ell} \frac{\partial \chi_{mnq\ell}}{\partial y_k} .
\end{eqnarray}
Now let us define $\widehat{v}^{\iss 2}$ where the $\alpha \beta$-th component is given by
\begin{align} \label{high v_2}
\widehat{v}^{\iss 2}_{\alpha \beta} 
=& C_{\alpha \beta k \ell} \Big( \frac{\partial \widehat{\chi}_{inmq\ell}}{\partial y_k} \frac{\partial^3  {u}^{\iss 0}_{q}}{\partial x_i \partial x_m \partial x_n} +  \frac{\partial \widehat{\gamma}_{i p q \ell}}{\partial y_k} \frac{\partial^2  \widetilde{u}^{\iss 1}_{q}}{\partial x_i \partial x_p} + \omega^2 \frac{\partial \widehat{\gamma}_{\ell m n}}{\partial y_k} \frac{\partial  {u}^{\iss 0}_{n}}{\partial x_m} + \omega^2 \frac{\partial \gamma_{m\ell}}{\partial y_k}  \widetilde{u}^{\iss 1}_m \Big).
\end{align}
Then from equation (\ref{homogenized tilde{u}1}), \eqref{higher order corrector 5-th order} and \eqref{dijnmq}, one can directly verify that  $\widehat v^{\iss 2}$ satisfies equation (\ref{high asymp 2}).

Now let us introduce the boundary corrector function $ \theta^\epsilon$ that solves
\begin{align}
\begin{cases}
\nabla \cdot \left(C(\dfrac{x}{\epsilon})\nabla \theta^\epsilon\right)+\omega ^2\rho(\dfrac{x}{\epsilon})\theta^\epsilon =0 &\mbox{ in }D,\\
\Delta^*\theta ^\epsilon +\omega ^2\theta^\epsilon =0 &\mbox{ in }\mathbb R^d\setminus \overline{D},\\
(\theta^\epsilon)^+-(\theta^\epsilon)^- = u^{\iss 2} &\mbox{ on }\partial D,\\
(T_\nu \theta^\epsilon )^+-(C(\dfrac{x}{\epsilon }))\nabla\theta^\epsilon)^-\cdot \nu  =\widehat{v}^{\iss 2} \cdot \nu &\mbox{ on }\partial D,
\end{cases}\label{boundary corrector 2nd order}
\end{align}
where $\theta^\epsilon$ satisfies the Kupradze radiation condition \eqref{Kupradze radiation condition}.

\subsection{Rates of convergence in $L^2$ and $H^1$: The higher-order case}
Via previous discussions on higher-order asymptotic analysis, we have 

\begin{theorem}\label{H1 convergence lemma 2nd order}
Let $u^\epsilon$ and $u^{\iss 0}$ be the solutions of \eqref{Homogenization Equation} and \eqref{0- Homogenized Equation} respectively. Let $u^{\iss 1}$ and $u^{\iss 2}$  
be defined by equations \eqref{higher u_1 bulk} and \eqref{high u_2}, respectively, 
with $u^{\iss 1}=0$ and $u^{\iss 2}=0$ 
in $\mathbb R^d\setminus \overline{D}$. Let $\varphi^\epsilon $ and $\theta^\epsilon$ be the boundary correctors given by \eqref{varphi_epsilon} and \eqref{boundary corrector 2nd order}.  
Then for any ball $B_R$ with $D \subset B_R$, we have 
\begin{equation*}
\|u^\epsilon -(u^\iss 0+\epsilon u^\iss 1+\epsilon^2 u^\iss 2 + \epsilon \varphi^\epsilon + \epsilon^2 \theta^\epsilon ) \|_{H^1(B_R)}\leq C_R\epsilon^2 \|u^{\iss 0}\|_{H^4(D)},
\end{equation*}
where $C_R>0$ is a constant independent of $\epsilon$ and $u^\iss 0$.
\end{theorem}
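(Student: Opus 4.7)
The plan is to imitate the strategy of the proof of Lemma \ref{H1 convergence lemma}, but carried out one order higher in $\epsilon$. First I would introduce the interior error functions
\begin{align*}
w^\epsilon &:= u^\epsilon - u^{\iss 0} - \epsilon u^{\iss 1} - \epsilon^2 u^{\iss 2} \quad \text{in } D, \\
\zeta^\epsilon &:= C(\tfrac{x}{\epsilon})\nabla u^\epsilon - v^{\iss 0} - \epsilon v^{\iss 1} - \epsilon^2 \widehat{v}^{\iss 2} \quad \text{in } D,
\end{align*}
and their exterior counterparts $w^\epsilon = u^\epsilon - u^{\iss 0}$, $\zeta^\epsilon = C^{\iss 0}\nabla w^\epsilon$ in $\mathbb R^d \setminus \overline D$. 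Using the cell problems \eqref{Cell problem}, \eqref{equation gamma}, \eqref{higher order corrector}, \eqref{higher order corrector 5-th order}, the representations \eqref{higher u_1 bulk}, \eqref{high u_2}, \eqref{high v_1}, \eqref{high v_2}, together with the compatibility equation \eqref{homogenized tilde{u}1} governing $\widetilde{u}^{\iss 1}$, a direct if lengthy calculation shows that $(w^\epsilon,\zeta^\epsilon)$ obeys a first-order differential system whose right-hand side is genuinely $O(\epsilon^2)$, with coefficients depending smoothly on the cell correctors and on spatial derivatives of $u^{\iss 0}$ up to fourth order.

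Next, for an arbitrary test function $\phi \in C_c^\infty(B_R)$, I would introduce the auxiliary function $\Phi^\epsilon$ defined as in \eqref{corrector with test function} and examine the quantity
\begin{equation*}
\int_{B_R}\bigl(w^\epsilon - \epsilon \varphi^\epsilon - \epsilon^2 \theta^\epsilon\bigr)\cdot \phi \,dx.
\end{equation*}
By integrating by parts once over $D$ and twice over $B_R \setminus \overline D$ exactly as in the proof of Lemma \ref{H1 convergence lemma}, the boundary integrals on $\partial B_R$ vanish by the Kupradze radiation condition (cf.\ Appendix \ref{Appd_DtN}). The key point is that the transmission data for $\varphi^\epsilon$ in \eqref{varphi_epsilon} and for $\theta^\epsilon$ in \eqref{boundary corrector 2nd order} are engineered precisely to cancel the boundary traces at $\partial D$ coming from the jumps of $u^{\iss 1}$, $u^{\iss 2}$, $(v^{\iss 0}-\overline v^{\iss 0})/\epsilon + v^{\iss 1}$ and $\widehat v^{\iss 2}$; the residual after the cancellation is only the volume contribution from the $O(\epsilon^2)$ right-hand side of the interior system for $(w^\epsilon, \zeta^\epsilon)$.

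Applying Cauchy--Schwarz to this volume integral, and using that each cell corrector appearing is bounded in $L^\infty$ (by smoothness of $C$ and $\rho$) while the remaining factors involve derivatives of $u^{\iss 0}$ of order at most four, I would obtain
\begin{equation*}
\Big| \int_{B_R}\bigl(w^\epsilon - \epsilon \varphi^\epsilon - \epsilon^2 \theta^\epsilon\bigr)\cdot \phi \,dx \Big| \leq C\epsilon^2 \|u^{\iss 0}\|_{H^4(D)}\|\Phi^\epsilon\|_{H^1(D)},
\end{equation*}
whereupon the elliptic estimate $\|\Phi^\epsilon\|_{H^1(D)} \le C_R\|\phi\|_{H^{-1}(B_R)}$ and a duality argument conclude the $H^1$ bound.

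The main obstacle will be the tensor bookkeeping in the first step: one has to verify that, after substituting \eqref{higher u_1 bulk}, \eqref{high u_2}, \eqref{high v_1} and \eqref{high v_2} into the equations for $(w^\epsilon,\zeta^\epsilon)$ and using the cell problems, all $O(1)$ and $O(\epsilon)$ terms indeed cancel. This is where the homogenized equation \eqref{homogenized tilde{u}1} for $\widetilde u^{\iss 1}$ plays the crucial role, in direct analogy with the way the leading-order homogenized equation is used to eliminate $O(1)$ residuals in the proof of Lemma \ref{H1 convergence lemma}. The major and minor symmetries \eqref{Symmetry condition} of $C_{ijk\ell}$, together with the symmetries of the higher-order correctors $\chi_{mnq\ell}$, $\widehat\chi_{inmq\ell}$, $\widehat\gamma_{ikq\ell}$, $\widehat\gamma_{\ell mn}$ defined in \eqref{higher order corrector} and \eqref{higher order corrector 5-th order}, are essential for this cancellation. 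The appearance of $\|u^{\iss 0}\|_{H^4(D)}$ on the right-hand side is dictated by $u^{\iss 2}$ carrying two spatial derivatives of $u^{\iss 0}$ and the residual of the interior system involving up to fourth spatial derivatives.
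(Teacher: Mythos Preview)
Your proposal is correct and follows essentially the same approach as the paper: the same error functions $w^\epsilon$ and $\zeta^\epsilon$, the same auxiliary transmission problem for $\Phi^\epsilon$, the same duality pairing $\int_{B_R}(w^\epsilon-\epsilon\varphi^\epsilon-\epsilon^2\theta^\epsilon)\cdot\phi\,dx$, and the same cancellation of boundary terms via the design of $\varphi^\epsilon$ and $\theta^\epsilon$. The only minor remark is that the symmetries of the higher-order correctors you invoke are already baked into the construction of $\widehat v^{\iss 2}$ and the compatibility equation \eqref{homogenized tilde{u}1}; in the proof itself one simply quotes the resulting $O(\epsilon^2)$ system \eqref{system of DEs 2nd order} rather than re-verifying the tensor cancellations.
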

\begin{proof}
The proof is similar to the proof of Theorem \ref{H1 convergence lemma}. Again consider error functions in $D$ defined by 
\begin{equation*}
\label{wepsilon 2nd order}w^\epsilon := u^\epsilon -u^\iss 0 -\epsilon u^\iss 1 - \epsilon^2 u^\iss 2,
\end{equation*}
and 
\begin{equation*}
\label{zetaepsilon 2nd order}\zeta^\epsilon :=C(\frac{x}{\epsilon})\nabla u^\epsilon -v^\iss 0 -\epsilon v^\iss 1 - \epsilon^2 \widehat{v}^\iss 2,
\end{equation*}
where $v^{\iss 1}$, $\widehat{v}^{\iss 2}$ are defined by \eqref{high v_1}, \eqref{high v_2} with $v^{\iss 1}=0$ and $\widehat{v}^{\iss 2}=0$ 
in $\mathbb R^d\setminus \overline{D}$, and $w^\epsilon$ is a vector-valued function and $\zeta^\epsilon$ is a matrix-valued function. In this proof we conveniently use the same notations as in the proof of Theorem \ref{H1 convergence lemma}, since it is clear from the context. From straightforward computations, we can get 
\begin{align}
\begin{cases}
C(\dfrac{x}{\epsilon})\nabla w^\epsilon -\zeta^\epsilon=\epsilon^2 (v^\iss 2-C(y)\nabla _x u^\iss 2),\\
\nabla \cdot \zeta^\epsilon +\omega^2 \rho(y)w^\epsilon = -\epsilon^2 \big[ \omega^2 \rho u^\iss 2 + \nabla_x \cdot \widehat{v}^\iss 2 \big],
\end{cases}\label{system of DEs 2nd order}
\end{align} 
and moreover
\begin{eqnarray}\label{system of DEs div 2nd order}
\nabla \cdot \zeta^\epsilon + \omega^2 \rho w^\epsilon = -\epsilon^2 \big(  \nabla_x \cdot \widehat{v}^\iss 2 + \omega^2 \rho u^\iss 2  \big).
\end{eqnarray}

Outside $D$ we simply define the error functions by $w^\epsilon:=u^\epsilon-u^\iss 0$ and $\zeta ^\epsilon :=\nabla w^\epsilon$, this directly gives 
\begin{equation*}
-\nabla \cdot \zeta^\iss\epsilon = \omega ^2 w^\epsilon.
\end{equation*}
Let $\phi \in C^\infty _c (B_R)$ be a vector-valued test function and consider an auxiliary function $\Phi^\epsilon$ that solves
\begin{align}\label{corrector with test function 2nd order}
\begin{cases}
\nabla \cdot \left(C(\dfrac{x}{\epsilon})\nabla \Phi ^\epsilon\right)+\omega ^2\rho(\dfrac{x}{\epsilon})\Phi^\epsilon =\phi &\mbox{ in }D,\\
\Delta^*\Phi ^\epsilon +\omega ^2\Phi^\epsilon =\phi &\mbox{ in }\mathbb R^d\setminus \overline{D},\\
(\Phi^\epsilon) ^+-(\Phi^\epsilon) ^- = 0 &\mbox{ on }\partial D,\\
(T_\nu\Phi ^\epsilon )^+-(C(\dfrac{x}{\epsilon }))\nabla \Phi ^\epsilon)^-\cdot \nu  =0 &\mbox{ on }\partial D,
\end{cases}
\end{align}
where $\Phi^\epsilon $ satisfies the Kupradze radiation condition \eqref{Kupradze radiation condition}. Thus from the same argument as in the proof of Theorem \ref{H1 convergence lemma}, one can get  

\begin{align}
&\notag\int _{B_R}(w^\epsilon -\epsilon  \varphi^\epsilon - \epsilon^2 \theta^\epsilon)\cdot\phi dx\\
=&\notag\int _D(w^\epsilon-\epsilon  \varphi^\epsilon - \epsilon^2 \theta^\epsilon)\cdot\left(\nabla \cdot (C(\dfrac{x}{\epsilon})\nabla \Phi^\epsilon)+\omega^2 \rho(\dfrac{x}{\epsilon})\Phi^\epsilon\right)dx\\
&\notag+\int_{B_R\setminus D}(w^\epsilon -\epsilon  \varphi^\epsilon - \epsilon^2 \theta^\epsilon)\cdot(\Delta^*\Phi^\epsilon+\omega^2 \Phi^\epsilon)dx\\
=&\notag -\int_D \big( C(\frac{x}{\epsilon})\nabla w^\epsilon\big):\nabla { \Phi^\epsilon} dx+\epsilon \int_D \big( C(\frac{x}{\epsilon})\nabla \varphi^\epsilon \big):\nabla { \Phi^\epsilon}dx \\
&\notag +  \epsilon^2 \int_D \big( C(\frac{x}{\epsilon})\nabla \theta^\epsilon \big):\nabla { \Phi^\epsilon}dx + \int_D \omega ^2 \rho (\frac{x}{\epsilon})(w^\epsilon -\epsilon  \varphi^\epsilon - \epsilon^2 \theta^\epsilon)\cdot \Phi^\epsilon dx\\
&+ \notag \int_{\partial D}T_\nu(w^\epsilon -\epsilon  \varphi^\epsilon - \epsilon^2 \theta^\epsilon)^+ \cdot \Phi^\epsilon dS.
\end{align}
From integration by parts, one can further obtain
\begin{align}\notag
&\int _{B_R}(w^\epsilon -\epsilon  \varphi^\epsilon - \epsilon^2 \theta^\epsilon)\cdot\phi dx\\
=&\notag -\int_D \big( C(\frac{x}{\epsilon})\nabla w^\epsilon \big) : \nabla { \Phi^\epsilon} dx+\epsilon \int_{\partial D}\big
( C(\frac{x}{\epsilon})(\nabla \varphi^\epsilon)^{-} \cdot v \big) \cdot { \Phi^\epsilon}dS+ \int_D \omega ^2 \rho (\frac{x}{\epsilon}) w^\epsilon \cdot \Phi^\epsilon dx \\
&\notag+ \epsilon^2 \int_{\partial D}\big( C(\frac{x}{\epsilon})(\nabla \theta^\epsilon)^{-} \cdot v \big) \cdot { \Phi^\epsilon}dS+\int_{\partial D}T_\nu(w^\epsilon -\epsilon  \varphi^\epsilon - \epsilon^2 \theta^\epsilon)^+ \cdot \Phi^\epsilon dS.
\end{align}
Then from equations \eqref{varphi_epsilon}, \eqref{boundary corrector 2nd order}, \eqref{system of DEs 2nd order} and \eqref{system of DEs div 2nd order}, and integration by parts one can obtain
\begin{align}
&\notag \int _{B_R}(w^\epsilon -\epsilon  \varphi^\epsilon - \epsilon^2 \theta^\epsilon)\cdot\phi dx\\
=&\notag -\int _D\zeta^\epsilon : \nabla \Phi^\epsilon dx+\omega ^2 \int _D\rho (\frac{x}{\epsilon})w^\epsilon \cdot \Phi^\epsilon dx+ \int_{\partial D} T_\nu w^{\epsilon +} \cdot \Phi^\epsilon dS\\
&\notag - \epsilon \int_{\partial D} \left( (T_\nu \varphi^\epsilon)^+ -  C(\frac{x}{\epsilon})(\nabla \varphi^\epsilon)^{-} \cdot v  \right)\cdot \Phi^\epsilon dS - \epsilon^2 \int_{\partial D} \left( (T_\nu \theta^\epsilon)^+ -  C(\frac{x}{\epsilon})(\nabla \theta^\epsilon)^{-} \cdot v  \right)\cdot \Phi^\epsilon dS   \\
& \notag - \epsilon^2 \int _{ D}\big( \widehat{v}^{\iss 2} - C(\frac{x}{\epsilon})\nabla_x u^{\iss 2}  \big) : \nabla \Phi^\epsilon dx  \\
\label{estimate err 2nd order}=&- \epsilon^2 \int _{ D}\big( \widehat{v}^{\iss 2} - C(\frac{x}{\epsilon}) \nabla_x u^{\iss 2}  \big) : \nabla \Phi^\epsilon dx -\epsilon^2 \int_D \big(  \nabla_x \cdot \widehat{v}^\iss 2 + \omega^2 \rho u^\iss 2  \big) dx.
\end{align}
From the equations of $u^{\iss 1}$, $u^\iss 2$, $v^\iss 1$ and $\widehat v^\iss 2$  defined by \eqref{higher u_1 bulk}, \eqref{high u_2}, \eqref{high v_1} and \eqref{high v_2}, one can obtain 
\begin{align}\notag
\Big \|\widehat v^\iss 2 - C(\frac{x}{\epsilon})\nabla_x u^\iss 2\Big \|_{L^2(D)}\leq C\|u^\iss 0\|_{H^4(D)} \mbox{ and } \left\|\nabla_x \cdot \widehat{v}^\iss 2 + \omega^2 \rho u^\iss 2  \right\|_{L^2(D)}\leq C\|u^\iss 0 \|_{H^4(D)},
\end{align}
where $C$ is a constant. Furthermore apply the Cauchy-Schwartz inequality on \eqref{estimate err 2nd order}, then we obtain 
\begin{equation*}
\left|\int _{B_R}(w^\epsilon -\epsilon  \varphi^\epsilon - \epsilon^2 \theta^\epsilon )\cdot\phi dx\right|\leq C\epsilon^2 \|u^\iss 0 \|_{H^4(D)}\|\Phi^\epsilon\|_{H^1(D)},
\end{equation*}
for some constant $C>0$ independent of $\epsilon$. Again we utilize the standard estimate for the elliptic system \eqref{corrector with test function 2nd order} (see \cite{mclean2000strongly} for instance), then we can obtain
\begin{equation*}
\|\Phi^\epsilon \|_{H^1(D)}\leq C\|\phi \|_{H^{-1}(B_R)},
\end{equation*}
where $C>0$ is a constant depends on the coefficients and $R$, but independent of $\epsilon$. By the duality arguments in the Sobolev space, then we complete the proof. This proves \eqref{higher order error estimate 1} in Theorem \ref{MAIN THEOREM 2}.
\end{proof}

Recall that  $u^\iss 1=0$, $u^\iss 2=0$, $v^\iss 1=0$ and $\widehat{v}^\iss 2=0$ in $\mathbb R^d\setminus \overline{D}$, without loss of generality, we can choose $\widetilde u^\iss 1$ solves the constant coefficient elliptic system \eqref{homogenized tilde{u}1} in $D$ with $\widetilde u^\iss 1=0$ on $\partial D$. Therefore, $\widetilde u^\iss 1$ is a smooth solution in $D$, by using the elliptic estimate of \eqref{homogenized tilde{u}1} again, then we obtain $\|\widetilde u^\iss 1\|_{L^2(B_R)}=\|\widetilde u^\iss 1\|_{L^2(D)}\leq C\|u^\iss 0\|_{H^4(D)}$ for some constant $C>0$. 

Now, we are ready to prove Theorem \ref{MAIN THEOREM 2}.
\begin{proof}[Proof of Theorem \ref{MAIN THEOREM 2}]
By using the same reason and arguments as before, one can easily see that 
\begin{align}\notag
\|u^\iss 2\|_{L^2(B_R)}\leq C_R\|u^\iss 0\|_{H^4(D)}\quad\text{and}\quad\|\theta^\epsilon \|_{L^2(B_R)}\leq C_R\|u^\iss 0\|_{H^4(D)},
\end{align}
for some constants $C_R>0$ independent of $\epsilon $. Therefore, the proof is nothing but a straightforward corollary by combining previous lemmas. Therefore, we prove \eqref{higher order error estimate} and complete our proof of Theorem \ref{MAIN THEOREM 2}.
\end{proof}
\section{A second-order homogenization and wave dispersion}\label{Section 4}
Dispersive models for wave propagation in periodic media transcends the usual quasi-static regime and is of great interest. A dispersive model for scalar wave equation was derived using Floquet-Bloch theory and higher-order asymptotic of the Bloch variety \cite{santosa1991dispersive}.  Alternatively higher-order two-scale homogenization enables to demonstrate the dispersive effective of wave propagation in periodic meida  \cite{allaire2016comparison, cakoni2016homogenization, chen2001dispersive, meng2017dynamic, wautier2015second}. The higher-order homogenization in particular sheds light on  sensing the microstructure through dispersion \cite{lambert2015bridging}. In this section we study the higher-order effective wave equation that can demonstrate dispersion of wave propagation in periodic media.   To begin with let us recall from asymptotic expansion (\ref{expaned equation}), we get
\begin{eqnarray}
O(\epsilon^2) : && \label{high asymp 1 2nd order} v^\iss 2 - C(y) \big( \nabla_x u^\iss 2 +  \nabla_y u^\iss 3 \big) = 0,  \\
   &&\label{high asymp 2 2nd order} \big( \nabla_x \cdot v^\iss 2 + \nabla_y \cdot v^\iss 3) +  \omega^2 \rho(y) u^\iss 2 = 0 .
\end{eqnarray}
Here we seek for a $v^\iss 2$ different  from equation \eqref{high v_2}. This is to be realized by finding $u^\iss 3$ first. Indeed taking the divergence of equation \eqref{high asymp 1 2nd order} respect to the $y$ variable and noting \eqref{high asymp 2},
\begin{align*}
\nabla _y \cdot \big( C(y) \nabla_y u^\iss 3 \big)+ \nabla_y \cdot \big(  C(y) \nabla_x u^\iss 2 \big) = \nabla_y \cdot  v^\iss 2 =-\nabla_x\cdot v^\iss 1 -\omega ^2 \rho(y)u^\iss 1. 
\end{align*}
With the help of \eqref{higher u_1 bulk}, \eqref{high u_2} and \eqref{high v_1}, 
A direct calculation yields
\begin{align}
\frac{\partial }{\partial y_\alpha} \left( C_{\alpha j \beta \ell}(y) \frac{ \partial u^\iss 3_\ell}{\partial y_\beta} \right) =& \Big( 
-\frac{\partial }{\partial y_p} \big( C_{pji\ell} \chi_{mnq\ell} \big)+ \big( C_{ijn\ell} \chi_{\ell m q} -C_{ij k\ell} \frac{\partial \chi_{mnq\ell}}{\partial y_k} \big) \nonumber\\
&- \int_Y \big( C_{ijn\ell} \chi_{\ell m q} -C_{ij k\ell} \frac{\partial \chi_{mnq\ell}}{\partial y_k} \big) dy
 \Big) \frac{\partial^3 u^\iss 0_q}{\partial x_i \partial x_m \partial x_n} \nonumber\\
& + \Big( 
\frac{\partial }{\partial y_m} \big( C_{mjkn} \chi_{niq} \big)+ \big( -C_{ijkq} + C_{ijmn}   \frac{\partial \chi_{nkq}}{\partial y_m} \big) \nonumber\\
&- \int_Y\big({-C_{ijkq} + C_{ijmn}   \frac{\partial \chi_{nkq}}{\partial y_m}}\big)\,dy 
 \Big) \frac{\partial^2 \widetilde{u}^\iss 1_{q}}{\partial x_k \partial x_i} \nonumber\\
 & + \omega^2 \Big( 
-\frac{\partial }{\partial y_p} \big( C_{pjm\ell} \gamma_{n\ell} \big)+ \big( -C_{mjk\ell}  \frac{\partial \gamma_{n\ell}}{\partial y_k} + \rho \chi_{jmn}  \big) \nonumber\\
&- \int_Y\big(-C_{mjk\ell}  \frac{\partial \gamma_{n\ell}}{\partial y_k} + \rho \chi_{jmn} \big)\,dy 
 \Big) \frac{\partial u^\iss 0_{n}}{\partial x_m} \nonumber \\
 & -  \frac{\partial C_{ijkq}}{\partial y_i} \frac{\partial \widetilde{u}^\iss 2_q}{\partial x_k} - (\rho -\overline{\rho}) \omega^2 \widetilde{u}^\iss 1_j \label{eqn u3 dispersive}
\end{align}
Let us introduce the higher-order cell functions $\chi_{inmq\ell}$ and $\gamma_{\ell m n}$ that are $Y$-periodic and solve
\begin{eqnarray} 
\frac{\partial }{\partial y_{\alpha}} \left( C_{\alpha j \beta \ell }(y)  \frac{\partial \chi_{ i nmq\ell } }{\partial y_{\beta}} \right)  &=& -\frac{\partial }{\partial y_p} \big( C_{pji\ell} \chi_{mnq\ell} \big) + \big( C_{ijn\ell} \chi_{\ell m q} -C_{ij k\ell} \frac{\partial \chi_{mnq\ell}}{\partial y_k} \big) \nonumber \\
&&- \int_Y\big( C_{ijn\ell} \chi_{\ell m q} -C_{ij k\ell} \frac{\partial \chi_{mnq\ell}}{\partial y_k} \big) dy \label{chi 5-th dispersive}\\
\frac{\partial }{\partial y_{\alpha}} \left( C_{\alpha j \beta \ell }(y)  \frac{\partial \gamma_{ \ell m n } }{\partial y_{\beta}} \right)  &=&  -\frac{\partial }{\partial y_p} \big( C_{pjm\ell} \gamma_{n\ell} \big) + \big( -C_{mjk\ell}  \frac{\partial \gamma_{n\ell}}{\partial y_k} + \rho \chi_{jmn} \big) \nonumber \\
&& - \int_Y\big({-C_{mjk\ell}  \frac{\partial \gamma_{n\ell}}{\partial y_k} + \rho \chi_{jmn}}\big)\,dy.\label{gamma 3-rd dispersive}
\end{eqnarray}
By changing the index one can see that the governing equation \eqref{higher order corrector} for $\chi_{ikq\ell}$ can be written as
\begin{eqnarray} 
\frac{\partial }{\partial y_{\alpha}} \left( C_{\alpha j \beta \ell }(y)  \frac{\partial \chi_{ i k q \ell} }{\partial y_{\beta}} \right)  &=& \frac{\partial }{\partial y_m} \big( C_{mjkn} \chi_{niq} \big) + \big(-C_{ijkq} + C_{ijmn}   \frac{\partial \chi_{nkq}}{\partial y_m} \big) \nonumber \\
&& - \int_Y\big({-C_{ijkq} + C_{ijmn}   \frac{\partial \chi_{nkq}}{\partial y_m}}\big)\,dy. \label{chi 4-th dispersive}
\end{eqnarray}
From \eqref{eqn u3 dispersive}, \eqref{chi 5-th dispersive}, \eqref{gamma 3-rd dispersive} and \eqref{chi 4-th dispersive}, one can obtain that  
\begin{align}
u^\iss 3_{\ell}= \chi_{ inmq\ell}  \frac{\partial^3 u^\iss 0_{q}}{\partial x_i \partial x_m \partial x_n} +  \chi_{ik q \ell} \frac{ \partial^2 \widetilde{u}^\iss 1_{q} }{\partial x_i \partial x_k}+ \omega^2 \gamma_{\ell mn}(y) \frac{\partial u^\iss 0_n}{\partial x_m}-  \chi_{\ell k q} \frac{\partial \widetilde{u}^\iss 2_q}{\partial x_k} + \omega^2 \gamma_{m\ell} \widetilde{u}^\iss 1_m, \label{high u_3}
\end{align}
for $1\leq \ell \leq d$.

Now we have from equation \eqref{high asymp 1 2nd order} that 
\begin{eqnarray*} \label{high v_2 new}
v^\iss 2 = C(y) \big( \nabla_x u^\iss 2 +  \nabla_y u^\iss 3 \big),
\end{eqnarray*}
then from the representation of $u^\iss 2$ and $u^\iss 3$ in equations  \eqref{high u_2} and  \eqref{high u_3} respectively, one can obtain
\begin{align}
&\notag(v^\iss 2)_{\alpha \beta} 
= C_{\alpha \beta k \ell} \big(  \frac{\partial \chi_{inmq\ell}}{\partial y_k} + \chi_{mnq\ell} \delta_{ki} \big)  \frac{\partial^3 u^\iss 0_{q}}{\partial x_i \partial x_m \partial x_n} + C_{\alpha \beta k \ell} \big(  \frac{\partial \chi_{imq\ell}}{\partial y_k} - \chi_{\ell mq} \delta_{ki} \big)  \frac{\partial^2 \widetilde{u}^\iss 1_{q}}{\partial x_i \partial x_m} \nonumber \\
&+\omega^2 C_{\alpha \beta k \ell} \big(  \frac{\partial \gamma_{\ell mq}}{\partial y_k} + \gamma_{q \ell} \delta_{mk} \big)  \frac{\partial {u}^\iss 0_{q}}{\partial x_m} + C_{\alpha \beta k \ell} \big(  -\frac{\partial \chi_{\ell mq}}{\partial y_k} + \delta_{q \ell} \delta_{mk} \big)  \frac{\partial \widetilde{u}^\iss 2_{q}}{\partial x_m} + \omega^2 C_{\alpha \beta k \ell}\frac{\partial \gamma_{q\ell}}{\partial y_k} \widetilde{u}^\iss 1_q. \label{high order v_2 new componentwise}
\end{align}
Taking the $Y$-average of equation \eqref{high asymp 2 2nd order} yields
\begin{eqnarray*}
\int_Y \nabla_x \cdot v^\iss 2 dy   +  \omega^2 \int_Y \rho u^\iss 2 dy = 0,
\end{eqnarray*}
note further that $v^{\iss 2}$ and $u^{\iss 2}$ are given by \eqref{high order v_2 new componentwise} and \eqref{high u_2} respectively,  then from a direct calculation one can obtain the following equation for $\widetilde{u}^\iss 2$ 
\begin{align}
\overline{C}_{\alpha \beta m q}  \frac{\partial^2 \widetilde{u}^\iss 2_{q}}{\partial x_\alpha \partial x_m} + \omega^2 \overline{\rho} \widetilde{u}^\iss 2_\beta
 =&\notag  -\frac{\partial^4 u^\iss 0_{q}}{\partial x_\alpha \partial x_i \partial x_m \partial x_n}  \int_Y \big( C_{\alpha \beta k \ell}\frac{\partial \chi_{inmq\ell}}{\partial y_k}  + C_{\alpha \beta i \ell} \chi_{mnq\ell} \big) ~dy\\
& - ~\omega^2 \frac{\partial^2 u^\iss 0_{q}}{\partial x_\alpha  \partial x_m} \Big(  \int_Y \big( C_{\alpha \beta k \ell}\frac{\partial \gamma_{\ell m q}}{\partial y_k}  + C_{\alpha \beta m \ell} \gamma_{q\ell} \big) ~dy \nonumber \\
& \qquad \qquad \qquad \quad + \int_Y \rho \chi_{m\alpha q \beta} ~dy -(\overline{\rho})^{-1} \overline{C}_{\alpha j m q} \int_Y \rho \gamma_{m\beta}~dy\Big) \nonumber \\
& -~ \frac{\partial^3 \widetilde{u}^\iss 1_{q}}{\partial x_\alpha \partial x_i \partial x_m}  \int_Y \big( C_{\alpha \beta k \ell}\frac{\partial \chi_{imq\ell}}{\partial y_k}  - C_{\alpha \beta i \ell} \chi_{ \ell m q} \big) ~dy. \nonumber \\
&+~ \omega^2 \frac{\partial \widetilde{u}^\iss 1_{q}}{ \partial x_m} \int_Y \big(\rho \chi_{\beta m q} -C_{m \beta k \ell} \frac{\partial \gamma_{q \ell} }{\partial y_k}  \big)~dy   \label{pde of average u_2}
\end{align}
Now let us recall that the solution $u^\epsilon$ to \eqref{System of differential equation} has the following anstaz
\begin{align*}
  u^{\epsilon}(x,y)=  u^\iss 0(x,y)+\epsilon u^\iss 1(x,y)+\epsilon^{2}u^\iss 2(x,y)+\cdots=\sum_{k=0}^{\infty}\epsilon^{k}u^\iss k(x,y).
 \end{align*}
Note that all the cell functions are $Y$-periodic and their averages over $Y$ are zero, then from equations \eqref{0- Homogenized Equation}, \eqref{homogenized tilde{u}1} and change of index, we can summarize the governing equations for $\overline{u}^\iss 0$ and $\overline{u}^\iss 1$ in $D$, where $\overline{u}^\iss 0$, $\overline{u}^\iss 1$ are the averages of $u^\iss 0$ and $u^\iss 1$  respectively in the unit cell $Y$ (recall that $u^\iss 0(x,y)=u^\iss 0(x)$ so that $\overline u^\iss 0=u^\iss 0$),

\begin{eqnarray}
\nabla\cdot(\overline{C}\nabla \overline{u}^\iss 0)+\omega^{2}\overline{\rho}   \overline{u}^{\iss 0} &=& 0,  \label{pde of average u_0} \\
  \overline{C}_{ijk\ell}  \frac{\partial^2 \overline{u}^\iss 1_{\ell}}{\partial x_i \partial x_k} + \omega^2 \overline{\rho} \overline{u}^\iss 1_j \label{pde of average u_1}
&=& \notag -\left( \frac{\partial^3  \overline{u}^\iss 0_{q}}{\partial x_i \partial x_m \partial x_n}\right) \int_Y \left( -C_{ijn\ell} \chi_{\ell m q} + C_{ijk\ell} \frac{\partial }{\partial y_k} \chi_{mnq \ell}  \right)dy  \\ 
&&   -\omega^2 \frac{\partial \overline{u}^\iss 0_{n}}{\partial x_m}  \int_Y\left( -\rho \chi_{jmn} + C_{mjk\ell} \frac{\partial }{\partial y_k} \gamma_{n \ell} \right)dy.
\end{eqnarray}
Let $U:= \overline{u}^\iss 0 + \epsilon \overline{u}^\iss 1 +  \epsilon^2 \overline{u}^\iss 2$. Now multiply equation \eqref{pde of average u_2} and equation \eqref{pde of average u_1}   by $\epsilon^2$ and $\epsilon$ respectively, and sum them with equation \eqref{pde of average u_0}, then it is seen that $U$ satisfies the following fourth-order partial differential equation

\begin{align*}
 \overline{C}_{\alpha \beta m q}  \frac{\partial^2 U^{q}}{\partial x_\alpha \partial x_m} + \omega^2 \overline{\rho} U ^\beta 
 =&  -\epsilon^2 \frac{\partial^4 u^\iss 0_{q}}{\partial x_\alpha \partial x_i \partial x_m \partial x_n}  \int_Y \big( C_{\alpha \beta k \ell}\frac{\partial \chi_{inmq\ell}}{\partial y_k}  + C_{\alpha \beta i \ell} \chi_{mnq\ell} \big) ~dy\\
& - ~ \epsilon^2 \omega^2 \frac{\partial^2 u^\iss 0_{q}}{\partial x_\alpha  \partial x_m} \Big(  \int_Y \big( C_{\alpha \beta k \ell}\frac{\partial \gamma_{\ell m q}}{\partial y_k}  + C_{\alpha \beta m \ell} \gamma_{q\ell} \big) ~dy \nonumber \\
& \qquad \qquad \qquad \quad + \int_Y \rho \chi_{m\alpha q \beta} ~dy - (\overline{\rho})^{-1}\overline{C}_{\alpha j m q} \int_Y \rho \gamma_{m\beta}~dy\Big) \nonumber \\
& -~ \epsilon \frac{\partial^3 (\epsilon \widetilde{u}^\iss 1_{q}+u^\iss 0_q)}{\partial x_\alpha \partial x_i \partial x_m}  \int_Y \big( C_{\alpha \beta k \ell}\frac{\partial \chi_{imq\ell}}{\partial y_k}  - C_{\alpha \beta i \ell} \chi_{ \ell m q} \big) ~dy. \nonumber \\
&+~ \epsilon \omega^2 \frac{\partial (\epsilon \widetilde{u}^\iss 1_{q}+ u^\iss 0_q)}{ \partial x_m} \int_Y\big(\rho \chi_{\beta m q} -C_{m \beta k \ell} \frac{\partial \gamma_{q \ell} }{\partial y_k}  \big) ~dy, \nonumber 
\end{align*}
and therefore the governing equation of $U$ up to order $\epsilon^3$ reads
\begin{align*}
 \overline{C}_{\alpha \beta m q}  \frac{\partial^2 U^{q}}{\partial x_\alpha \partial x_m} + \omega^2 \overline{\rho} U ^\beta  
 =&  -\epsilon^2 \frac{\partial^4 U^{q}}{\partial x_\alpha \partial x_i \partial x_m \partial x_n}  \int_Y \big( C_{\alpha \beta k \ell}\frac{\partial \chi_{inmq\ell}}{\partial y_k}  + C_{\alpha \beta i \ell} \chi_{mnq\ell} \big) ~dy\\
& - ~ \epsilon^2 \omega^2 \frac{\partial^2 U^{q}}{\partial x_\alpha  \partial x_m} \Big(  \int_Y \big( C_{\alpha \beta k \ell}\frac{\partial \gamma_{\ell m q}}{\partial y_k}  + C_{\alpha \beta m \ell} \gamma_{q\ell} \big) ~dy \nonumber \\
& \qquad \qquad \qquad \quad + \int_Y \rho \chi_{m\alpha q \beta} ~dy - (\overline{\rho})^{-1} \overline{C}_{\alpha j m q} \int_Y \rho \gamma_{m\beta}~dy\Big) \nonumber \\
& -~ \epsilon \frac{\partial^3 U^q}{\partial x_\alpha \partial x_i \partial x_m}  \int_Y \big( C_{\alpha \beta k \ell}\frac{\partial \chi_{imq\ell}}{\partial y_k}  - C_{\alpha \beta i \ell} \chi_{ \ell m q} \big) ~dy. \nonumber \\
&-~ \epsilon \omega^2 \frac{\partial U^q}{ \partial x_m} \int_Y\big(-\rho \chi_{\beta m q} +C_{m \beta k \ell} \frac{\partial \gamma_{q \ell} }{\partial y_k}  \big) ~dy + O(\epsilon^3). \nonumber 
\end{align*}
Hence the fourth-order equation can be conveniently casted as
\begin{align} \label{fourth-order PDE}
&\notag  \nabla \cdot \big( \overline{C} \nabla U \big) + \omega^2 \overline{\rho} U \\
=&-\epsilon^2 \big( D: \nabla^4 U +   \omega^2 E: \nabla^2 U \big)  - \epsilon \big( F : \nabla^3 U + \omega^2 G: \nabla U \big) + O(\epsilon^3),
\end{align}
where $D$ is a sixth-order tensor, $E$ is a fourth-order tensor, $F$ is a fifth-order tensor and $G$ is a third-order tensor respectively defined by 

\begin{eqnarray*}
D_{\beta \alpha i mn q} &=&  \int_Y \big( C_{\alpha \beta k \ell}\frac{\partial \chi_{inmq\ell}}{\partial y_k}  + C_{\alpha \beta i \ell} \chi_{mnq\ell} \big) ~dy, \\
E_{\beta \alpha m q} &=&  \int_Y \big( C_{\alpha \beta k \ell}\frac{\partial \gamma_{\ell m q}}{\partial y_k}  + C_{\alpha \beta m \ell} \gamma_{q\ell} \big) ~dy + \int_Y \rho \chi_{m\alpha q \beta} ~dy \\
&&- (\overline{\rho})^{-1} \overline{C}_{\alpha j m q} \int_Y \rho \gamma_{m\beta}~dy, \\
F_{\beta \alpha i m q} &=& \int_Y\big( C_{\alpha \beta k \ell}\frac{\partial \chi_{imq\ell}}{\partial y_k}  - C_{\alpha \beta i \ell} \chi_{ \ell m q} \big) ~dy, \\
G_{\beta m q} &=& \int_Y\big(-\rho \chi_{\beta m q} +C_{m \beta k \ell} \frac{\partial \gamma_{q \ell} }{\partial y_k}  \big) ~dy.
\end{eqnarray*}
The fourth-order partial differential equation \eqref{fourth-order PDE} in $D$ formally introduce the dispersion as is seen from the right hand side. In the low-frequency long-wavelength regime for wave propagation in periodic media, the dispersive wave equation has been demonstrated by a fourth-order partial differential equation for the acoustic case \cite{cakoni2016homogenization}. In the particular case that the periodic media occupies $\R^d$, \eqref{fourth-order PDE} models the wave propagation and transcends the quasi-static regime. To the authors' knowledge, our second-order homogenization for elastic wave is new in the literature.

\section{Appendix}\label{Section 5}

In the end of this paper, we offer basic materials in analysing the elastic scattering in periodic media.
\subsection{The Dirichlet to Neumann map}  \label{Appd_DtN}
Let $u$ satisfy the Navier's equation in the exterior domain 
\begin{equation*} \label{DtNNavier}
\Delta^{*}u+\omega^{2}u=0  \mbox{ in }\mathbb{R}^{d}\backslash\overline{D},
\end{equation*}
and $u$ has a decomposition that satisfies the \textit{Kupradze radiation condition}. Let $B_R$ be a sufficiently large ball such that $D \subset B_R$. In the case that $D \in \R^3$, we introduce the polar coordinates $r, \theta, \phi$ and the unit vectors $\hat{r}, \hat{\theta}, \hat{\phi}$. The $\theta$ coordinate corresponds to the angle from the $z$-axis, $\theta \in [0, \pi]$, and the $\phi$ coordinate corresponds to the angle in the $(x, y)$-plane, $\phi \in [0, 2\pi]$. Let $Y_{nm}$ be the spherical harmonic
\begin{equation*}
Y_{nm}(\theta, \phi) = \sqrt{ \frac{(2n+1)(n-|m|)!}{4\pi (n+|m|!)} } P_n^{|m|}(\cos \theta) e^{im\phi}, \quad n \ge 0, \quad |m|<n.
\end{equation*}
Now we let $U_{nm}$ and $V_{nm}$ be the vector spherical harmonics defined by
\begin{eqnarray*}
U_{nm} (\theta, \phi) &=& \frac{1}{\sqrt{\lambda_n}} \big(  \frac{\partial Y_{nm}}{\partial \theta} \hat{\theta} + \frac{1}{\sin \theta} \frac{\partial Y_{nm}}{\partial \phi} \hat{\phi} \big), \quad n\ge 1, \\
V_{nm} (\theta, \phi) &=& \hat{r} \times U_{nm} = \frac{1}{\sqrt{\lambda_n}} \big(  -\frac{1}{\sin \theta} \frac{\partial Y_{nm}}{\partial \phi} \hat{\theta} + \frac{\partial Y_{nm}}{\partial \theta} \hat{\phi} \big), \quad n\ge 1,
\end{eqnarray*}
where $\lambda_n = n(n+1)$.  The vectors $Y_{nm} \hat{r}, U_{nm}, V_{nm}$ form an orthonormal basis for $L^2(S)$ where $S$ denotes the unit sphere. Then $u$ on $\partial B_R$ has the following series expansion
\begin{eqnarray} \label{DtN_Dirichlet}
u = \sum_{n=0}^\infty \sum_{|m|<n} \Big(  (u|_{\partial B_R}, V_{nm}) V_{nm}  +  (u|_{\partial B_R}, U_{nm}) U_{nm}  + (u|_{\partial B_R}, Y_{nm} \hat{r}) Y_{nm} \hat{r}  \Big), 
\end{eqnarray}
where $(\cdot,\cdot)$ denotes the $L^2(S)$ inner product.
One can correspondingly express $T_\nu u$ on $\partial B_R$ as (see \cite{gachter2003dirichlet})
\begin{eqnarray}  \label{DtN_Neumann}
&T_\nu u = & \sum_{n=0}^\infty \sum_{|m|<n}   \Big(  a_n (u|_{\partial B_R}, V_{nm}) V_{nm}  + \big[ b_n  (u|_{\partial B_R}, U_{nm})  +  c_n (u|_{\partial B_R}, Y_{nm} \hat{r})  \big] U_{nm}  \\
&&\qquad \qquad + \big[ c_n  (u|_{\partial B_R}, u_{nm}) + d_n (u|_{\partial B_R}, Y_{nm} \hat{r}) \big] Y_{nm} \hat{r}  \Big) \quad \mbox{on} \quad B_R. \nonumber 
\end{eqnarray}
The coefficients $a_n, b_n, c_n, d_n$ are given by
\begin{eqnarray*}
a_n &=& \mu_0 (\gamma_s - \frac{1}{R}), \\
b_n &=&  \Big(  2 \mu_0 \sqrt{\lambda_n} (\gamma_p - \frac{1}{R}) \Big) \frac{B_n^{(1,1)}}{R} +  \mu_0 \Big( 2 \gamma_s + R \omega_s^2 + 2 (1-\lambda_n)  \frac{1}{R}   \Big) \frac{B_n^{(2,1)}}{R}, \\
c_n &=& \Big(  2 \mu_0 \sqrt{\lambda_n} (-\gamma_s + \frac{1}{R}) \Big) \frac{B_n^{(2,1)}}{R} + \Big(  2 \mu_0 (- 2\gamma_p + \frac{\lambda_n}{R}) \Big) \frac{B_n^{(1,1)}}{R-\mu_0 \omega_s^2}, \\
d_n &=& -\Big(  2 \mu_0 \sqrt{\lambda_n} (-\gamma_s + \frac{1}{R}) \Big) \frac{B_n^{(1,1)}}{R}  + \Big(  2 \mu_0 (- 2\gamma_p + \frac{\lambda_n}{R}) \Big) \frac{B_n^{(1,2)}}{R-\mu_0 \omega_s^2},
\end{eqnarray*}
where 
\begin{eqnarray*}
&& \gamma_s = \omega_s \frac{h_n^{'}(\omega_s R)}{h_n(\omega_s R)}, \quad \gamma_p = \omega_p \frac{h_n^{'}(\omega_p R)}{h_n(\omega_p R)}, \quad  B_n^{(1,1)} = -\frac{\sqrt{\lambda_n} R}{R \gamma_p (R\gamma_s + 1)-\lambda_n}, \\
&&B_n^{(1,2)} =  \frac{ R(1+ R \gamma_s) }{R \gamma_p (R\gamma_s + 1)-\lambda_n}, 
 \quad B_n^{(1,1)} = -\frac{R^2 \gamma_p}{R \gamma_p (R\gamma_s + 1)-\lambda_n}.
\end{eqnarray*}
Now for any functions $w$ and $u$ that satisfy the Kupradze radiation condition, one can directly obtain from \eqref{DtN_Dirichlet} and \eqref{DtN_Neumann} that
\begin{eqnarray*}
\int_{\partial B_R}T_\nu u \cdot w dS - \int_{\partial B_R}T_\nu w \cdot u dS =0.
\end{eqnarray*}
We remark that when $D \subset \R^2$, the above equality can be derived in a similar way \cite{bao2018direct}.

Let $B_R\subset \mathbb R^d$ be a ball of radius $R>0$, then the Dirichlet to Neumann (DN) map was given by \cite{bao2018direct}.
\begin{definition}
For any $g\in H^{1/2}(\partial B_R)$, the DN map
\begin{align}\label{DN map}
\Lambda:H^{1/2}(\partial B_R)\to H^{-1/2}(\partial B_R) \quad\text{ with }\quad \Lambda g|_{\partial B_R} = T_\nu u|_{\partial B_R}, 
\end{align}  
where $u\in H^1_{loc}(\mathbb R^d\setminus\overline{B_R})$ is a solution of the Navier's equation $\Delta ^*u+\omega ^2u=0$ in $\mathbb R^d \setminus \overline{B_R}$ and $u$ satisfies the Kupradze radiation condition \eqref{Kupradze radiation condition} at infinity. Here we have assumed that $\omega$ is non-resonant to the above Navier's equation.
\end{definition}
Notice that the DN map $\Lambda$ is a bounded operator, so that it helps to reduce the scattering problem in unbounded domain to a bounded domain, and we refer readers to \cite[Section 2]{bao2018direct} for detailed discussions.

\subsection{Derivation of the homogenized equation}
Consider the simplest linear elliptic system
of the homogenization theory. The periodic homogenization theory was studied by \cite{cioranescu2000introduction, geng2017boundary} and we refer readers to these references for the comprehensive study.
We are concerned with the divergence
form second order elliptic operators with rapidly oscillating periodic
coefficients, 
\[
\mathcal{L}_{\epsilon}:=-\nabla\cdot\left(A(\frac{x}{\epsilon})\nabla\right)=-\dfrac{\partial}{\partial x_{i}}\left(
	 a_{ijk\ell}(\dfrac{x}{\epsilon})\dfrac{\partial}{\partial x_{k}}\right),\quad\epsilon>0.
\]
We assume the coefficients $A(y)=\left(a_{ijk\ell}(y)\right)$
with $1\leq i,j,\alpha,\beta\leq d$ for the dimension $d\geq2$ is
real, bounded and measurable such that $A$ satisfies 
\begin{equation}\label{ellipticity}
\mbox{ellipticity: }\mu\sum _{i,j=1}^d|\varepsilon_{ij}|^{2}\leq a_{ijk\ell}(y)\varepsilon_{ij}\varepsilon_{k\ell}\leq\dfrac{1}{\mu}\sum _{i,j=1}^d|\varepsilon_{ij}|^{2},
\end{equation}
for all symmetric matrix $(\varepsilon_{ij})_{1\leq i,j\leq d}$, and 
\begin{equation*}\label{periodicity}
\mbox{Y-periodicity: }A(y+z)=A(y)\mbox{ for all }y\in\mathbb{R}^{d},\mbox{ }z \in Y:=[0,1]^d,
\end{equation*}
for some constant $\mu>0$.

Given $F\in H^{-1}(\Omega)$, let $u_{\epsilon}\in H_{0}^{1}(\Omega)$
be a solution of 
\begin{equation}
\mathcal{L}_{\epsilon}u^{\epsilon}=F\mbox{ in }\Omega,\label{Hom}
\end{equation}
where $\Omega$ is a bounded Lipschitz domain in $\mathbb{R}^{d}$.
By the Lax-Milgram theorem, we have 
\begin{equation*}
\|u^{\epsilon}\|_{H_{0}^{1}(\Omega)}\leq C\|F\|_{H^{-1}(\Omega)},\label{eq:Lax_MilgramD}
\end{equation*}
where the constant $C$ independent of $\epsilon$. Note that $u_{\epsilon}\in H_{0}^{1}(\Omega)$
is a weak solution of (\ref{Hom}) if for all $\varphi\in H_{0}^{1}(\Omega)$,
we have 
\[
\int_{\Omega}(A(\dfrac{x}{\epsilon})\nabla u^{\epsilon}):\nabla\varphi dx=\left\langle F,\varphi\right\rangle _{H^{-1}(\Omega)\times H_{0}^{1}(\Omega)}.
\]

Next, we want to derive the homogenized equation by using the following
asymptotic analysis. We consider $u_{\epsilon}$ to be the perturbation
of $u_{0}$ with respect to $\epsilon$-parameter. Moreover, by observing
the elliptic operator $\mathcal{L}_{\epsilon}$, we introduce the
famous \textit{two-scale homogenization method} in the homogenization theory: Let
us regard $x=x$, and $y=\dfrac{x}{\epsilon}$ as two independent
parameters. Let 
\[
u^{\epsilon}:=u^\iss 0+\epsilon u^{(1)}+\epsilon^{2}u^\iss 2+\cdots
\]
be the asymptotic expansion of $u_{\epsilon}$, where 
\[
u^\iss j:=u^\iss j(x,y)=u^\iss j(x,\dfrac{x}{\epsilon}).
\]
In addition, 
\[
\nabla u^\iss j=\nabla_{x}u^\iss j(x,y)+\cfrac{1}{\epsilon}\nabla_{y}u^\iss j(x,y),\mbox{ as }y=\dfrac{x}{\epsilon},
\]
which means under our two-scaled method, the operator $\nabla=\nabla_{x}+\cfrac{1}{\epsilon}\nabla_{y}$.
Therefore, (\ref{Hom}) will become
\begin{equation}
-\left(\nabla_{x}+\dfrac{1}{\epsilon}\nabla_{y}\right)\cdot\left\{A(y)\left[\left(\nabla_{x}+\cfrac{1}{\epsilon}\nabla_{y}\right)\left(u^\iss 0+\epsilon u^\iss 1+\epsilon^{2}u^\iss 2+\cdots\right)\right]\right\}=F(x)\mbox{ in }\Omega.\label{eq:2SCALE}
\end{equation}
We point out that the derivation of the homogenized equation did not need to take care of the boundary condition of certain equations.
Expand (\ref{eq:2SCALE}) and compare it with the same $\epsilon^N$-orders (for $N=0,-1,-2$), so
we get
\begin{alignat}{1}\label{Eq of Asym}
\notag O(\dfrac{1}{\epsilon^{2}}):\mbox{ }-\nabla_{y}\cdot(A(y)\nabla_{y}u^\iss 0(x,y)) & =0,\\
O(\dfrac{1}{\epsilon}):\mbox{ }-\nabla_{y}\cdot(A(y)\nabla_{y}u^\iss 1(x,y)) & =\nabla_{y}\cdot(A(y)\nabla_{x}u^\iss 1)+\nabla_{x}\cdot(A(y)\nabla_{y}u^\iss 0),\\
\notag 
O(1):\mbox{ }-\nabla_{y}\cdot(A(y)\nabla_{y}u^\iss 2(x,y)) & =\nabla_{y}\cdot(A(y)\nabla_{x}u^\iss 1)+\nabla_{x}\cdot(A(y)\nabla_{y}u^\iss 1)\\
&\notag +\nabla_{x}\cdot(A(y)\nabla_{x}u^\iss 0)+F(x). 
\end{alignat}
Recall that for the periodic elliptic equation 
\[
-\nabla\cdot(A(y)\nabla v(y))=h(y),\mbox{ whenever }A(y)\mbox{ is Y-periodic,}
\]
then we have 
\[
\int_{Y}h(y)dy=0,
\]
by using the Stokes formula. For $O(\dfrac{1}{\epsilon^{2}})$
term, this equation is solvable because the right hand side is zero.
In further, we multiply $u^\iss 0(x,y)$ on both sides and integrate
by parts, which will imply 
\[
0=\int_{Y} \big( A(y)\nabla_{y}u^\iss 0 \big):\nabla_{y}u^\iss 0\geq\mu\int_{Y}|\nabla_{y}u^\iss 0(x,y)|^{2}dy\geq0,
\]
which gives us the information that 
\[
u^\iss 0(x,y)\equiv u^\iss 0(x)
\]
and $u_{0}$ is independent of $y$. 

Now, for the second term $O(\dfrac{1}{\epsilon})$, the second term
on the right hand side should be zero since $\nabla_{y}u^\iss 0(x)=0$.
Solve the equation 
\begin{align*}
-\nabla_{y}\cdot(A(y)\nabla_{y}u^\iss 1(x,y)) =\nabla_{y}\cdot(A(y)\nabla_{x}u^\iss 0)=(\nabla_{y}\cdot A(y))(\nabla_{x}u^\iss 0)
\end{align*}
formally. Note that since $A(y)$ is $Y$-periodic, then the equation
is solvable for $u^\iss 1$ if 
\[
\int_{Y}(\nabla_{y}\cdot A(y))\cdot(\nabla_{x}u^\iss 0)dy=\int_{\partial Y}(A(y)\nabla_{x}u^\iss 0)\cdot\nu(y)dS(y)=0.
\]
By using the separation of variables, we put the \textit{ansatz} 
\[
u^\iss 1(x,y)=\chi(y)\cdot (\nabla_{x}u^\iss 0(x))
\]
with $u^\iss 1=(u^\iss 1_{\alpha})_{1\leq\alpha\leq d}$ such that 
\[
u^\iss 1_{\alpha}(x,y)=\chi_{\alpha j \beta }(y)\dfrac{\partial u^\iss 0_{\beta}}{\partial x_{j}}(x).
\]
Moreover, the corrector $\chi_{\alpha j \beta}$
is $Y$-periodic and solves the cell problem 
\[
\begin{cases}
\dfrac{\partial}{\partial y_{i}}\left(a_{ijmn}-a_{ijk\ell}\dfrac{\partial}{\partial y_{k}} \chi_{\ell m n}\right)=0\mbox{ in } Y,\\
\int_{Y}\chi_{\ell mn}(y)dy=0,
\end{cases}
\]
and plug $u^\iss 1$ to the $O(\dfrac{1}{\epsilon})$ equation \eqref{Eq of Asym} to obtain 
\[
-\nabla\cdot(A(y)\nabla_{y}\chi(y))(\nabla_{x}u^\iss 0)=(\nabla_{y}\cdot A(y))(\nabla_{x}u^\iss 0).
\]

Finally plug $u^\iss 1(x,y)=\chi(y)\nabla_{x}u^\iss 0$
into the $O(1)$ equation and examine the solvability condition for
$u^\iss 2(x,y)$, we have 
\begin{align*}
0 & =\int_{Y}\left[\nabla_{y}\cdot(A(y)\nabla_{x}u^\iss 1)+\nabla_{x}\cdot(A(y)\nabla_{y}u^\iss 1)+\nabla_{x}\cdot(A(y)\nabla_{x}u^\iss 0)+F(x)\right]dy\\
& =\nabla_{x}\cdot\left\{\left[\int_{Y}A(y)(\nabla_{y}\chi(y))dy\right]\nabla_{x}u^\iss 0\right\}+\nabla_{x}\cdot\left\{\left[\int_{Y}A(y)dy\right]\nabla_{x}u^\iss 0\right\}+F(x),
\end{align*}
where the first term vanishes by the periodicity of $A$ and $\chi$.
Thus, we can obtain that $u^\iss 0\in H_{0}^{1}(\Omega)$
is a solution of 
\begin{equation}
\overline{\mathcal{L}}u^\iss 0:=-\nabla\cdot(\overline{A}\nabla u^\iss 0)=F(x)\mbox{ in }\Omega,\label{eq:HOMOGE}
\end{equation}
where 
\[
\overline{A}=\int_{Y}\left\{A(y)+A(y)(\nabla_{y}\chi(y))\right\}dy,
\]
where $\overline{A}$ is the (constant) homogenized operator and we
call (\ref{eq:HOMOGE}) to be the \textit{homogenized equation}. In
addition, $\overline{A}=(\overline{a}_{ijk\ell})_{1\leq i,j,k,\ell \leq d}$ and 
\begin{equation}
\overline{a}_{ijk\ell}=\int_{Y}\left(a_{ijk\ell}-a_{ijmn}\dfrac{\partial}{\partial y_{m}} \chi_{nk\ell}\right)dy.\label{Homogenized leading order}
\end{equation}
For the rigorous derivation of the homogenized equation, we need to
use a famous result, which is called the Div-Curl lemma. We skip the
rigorous analysis here and refer readers to the lecture note  \cite{shen2017lectures} for more
details.

Note that $\overline{\mathcal{L}}:=-\nabla\cdot(\overline{A}\nabla)$
is the homogenized second order elliptic operator with respect to
$A$ and we want to prove $\overline{\mathcal{L}}$ is an elliptic
operator with constant coefficients.
\begin{theorem}
	The homogenized operator $\overline{\mathcal L}$ satisfies that 
	
	1. $\overline{\mathcal{L}}$ is an elliptic operator, which means 
	\begin{equation}
	\mu_1\sum _{i,j=1}^d|\varepsilon_{ij}|^{2}\leq \overline{a}_{ijk\ell}(y)\varepsilon_{ij}\varepsilon_{k\ell}\leq\dfrac{1}{\mu_1}\sum _{i,j=1}^d|\varepsilon_{ij}|^{2}, \label{ellipticity}
	\end{equation}
	for some constant $\mu_{1}>0$.\end{theorem}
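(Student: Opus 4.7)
My plan is to use the standard variational/energy-identity characterization of the homogenized tensor. Fix a constant symmetric matrix $\xi=(\xi_{ij})$ and introduce the corrector contracted against $\xi$, namely $\phi_n(y):=\chi_{nk\ell}(y)\,\xi_{k\ell}$, which is $Y$-periodic with mean zero. Multiplying the cell problem (\ref{Cell problem}) by $\xi_{mn}$ I get
\[
\frac{\partial}{\partial y_i}\Big(a_{ijk\ell}(y)\big(\xi_{k\ell}-\partial_{y_k}\phi_\ell\big)\Big)=0\quad\text{in }Y.
\]
Testing this against $\phi_j$ and integrating by parts (boundary terms vanish by $Y$-periodicity) yields the orthogonality $\int_Y a_{ijk\ell}(y)(\xi_{k\ell}-\partial_{y_k}\phi_\ell)\,\partial_{y_i}\phi_j\,dy=0$.

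Combining this with (\ref{Homogenized leading order}) and writing $\Xi_{ij}(y):=\xi_{ij}-\partial_{y_i}\phi_j(y)$, I obtain the key energy identity
\[
\overline{a}_{ijk\ell}\,\xi_{ij}\xi_{k\ell}=\int_Y a_{ijk\ell}(y)\,\Xi_{ij}(y)\,\Xi_{k\ell}(y)\,dy.
\]
This is the standard fact that the homogenized quadratic form equals the cell energy evaluated on the corrected strain. No symmetry of $\Xi$ is yet used here.

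For the lower bound, the point is that $\Xi$ itself is not symmetric, so strong convexity (\ref{strong convexity}) cannot be applied to $\Xi$ directly. However, because $C=(a_{ijk\ell})$ enjoys both minor symmetries and major symmetry (\ref{Symmetry condition}), the contraction $a_{ijk\ell}\Xi_{ij}\Xi_{k\ell}$ equals $a_{ijk\ell}\tilde E_{ij}\tilde E_{k\ell}$ where $\tilde E_{ij}(y):=\xi_{ij}-\tfrac12(\partial_{y_i}\phi_j+\partial_{y_j}\phi_i)$ is the symmetric part of $\Xi$. Strong convexity now applies pointwise, giving
\[
\overline{a}_{ijk\ell}\xi_{ij}\xi_{k\ell}\ \ge\ \mu\!\int_Y|\tilde E(y)|^2\,dy.
\]
Jensen's inequality together with the periodicity identity $\int_Y \partial_{y_i}\phi_j\,dy=0$ gives $\int_Y\tilde E\,dy=\xi$, hence $\int_Y|\tilde E|^2\,dy\ge|\xi|^2$, which yields the lower bound with $\mu_1\le\mu$. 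The upper bound follows by applying the right half of (\ref{ellipticity}) to the same identity and controlling $\int_Y|\Xi|^2\,dy$ through the standard Lax--Milgram a priori estimate $\|\phi\|_{H^1(Y)}\le C|\xi|$ for the cell problem.

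The plan has no substantial obstacle; the only delicate point is the mismatch between the non-symmetric $\Xi$ that appears in the energy identity and the hypothesis of (\ref{strong convexity}) that only controls symmetric matrices. This is resolved cleanly by invoking the minor symmetries of $C$ to pass from $\Xi$ to its symmetric part $\tilde E$ without changing the quadratic form.
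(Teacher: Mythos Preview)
Your argument is correct and follows essentially the same route as the paper: both establish the energy identity
\[
\overline{a}_{ijk\ell}\,\xi_{ij}\xi_{k\ell}=\int_Y a_{\alpha\beta\gamma\delta}(y)\,\partial_{y_\alpha}\!\big(y_i\xi_{i\beta}+\chi_{\beta ij}\xi_{ij}\big)\,\partial_{y_\gamma}\!\big(y_k\xi_{k\delta}+\chi_{\delta k\ell}\xi_{k\ell}\big)\,dy
\]
and read off ellipticity from it. The differences are in how the lower bound is extracted. You explicitly invoke the minor symmetries of $C$ to replace the non-symmetric gradient $\Xi$ by its symmetric part $\tilde E$ before applying strong convexity, and then use Jensen together with $\int_Y\tilde E\,dy=\xi$ to get the sharp constant $\mu_1=\mu$. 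The paper instead writes the energy as $\ge\mu\int_Y|\nabla(\cdot)|^2\,dy$ (glossing over the symmetric-part issue you correctly flag), observes that equality forces $y_i\xi_{i\beta}+\chi_{\beta ij}\xi_{ij}$ to be constant, deduces $\xi=0$ from the periodicity of $\chi$, and then appeals to compactness on the unit sphere of symmetric matrices to obtain \emph{some} $\mu_1>0$. Your treatment is cleaner on the symmetry point and yields an explicit constant; the paper's version is shorter but less precise about why strong convexity applies to the corrected strain.
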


    2. The effective coefficient $\overline{a}_{ijk\ell}$ is major and minor symmetric provided $a_{\alpha \beta \gamma \delta }$ is major and minor symmetric.
\begin{proof}
	It is easy to see that $|\overline{a}_{ijk\ell}|\leq C$
	by using \eqref{Homogenized leading order} and the ellipticity of
	$A(y)$, for some constant $C>0$. It remains to show $\overline{a}_{ijk\ell}\varepsilon_{ij}\varepsilon_{k\ell}\geq\mu_1\sum _{i,j=1}^d|\varepsilon_{ij}|^{2}$ for some constant $\mu _1>0$.
	We can rewrite \eqref{Homogenized leading order} as 
	\[
	\overline{a}_{ijk\ell}=\int_{Y}\dfrac{\partial}{\partial y_{\alpha}}\left\{ \delta_{\beta j}y_{i}+\chi_{\beta ij}\right\} \cdot a_{\alpha \beta\gamma \delta}\cdot\dfrac{\partial}{\partial y_{\gamma}}\left\{ \delta_{\delta \ell}y_{k}+\chi_{\delta k \ell}\right\} dy,
	\]
	where $\delta_{s\alpha}$ is the standard Kronecker delta (i.e., $\delta_{s\alpha}=1$
	if $s=\alpha$, and $\delta_{s\alpha}=0$ otherwise). Hence, for $\varepsilon=(\varepsilon_{ij})\in\mathbb{R}^{d\times d}$,
	we have 
	\begin{align*}
	\overline{a}_{ijk\ell}\varepsilon_{ij}\varepsilon_{k\ell}&=\int_{Y}\dfrac{\partial}{\partial y_{\alpha}}\left\{ \delta_{\beta j}y_{i}\varepsilon_{ij}+\chi_{\beta ij}\varepsilon_{ij}\right\} \cdot a_{\alpha \beta\gamma \delta}\cdot\dfrac{\partial}{\partial y_{\gamma}}\left\{ \delta_{\delta \ell}y_{k}\varepsilon_{k\ell}+\chi_{\delta k \ell}\varepsilon_{k\ell}\right\} dy\\
	& \geq\mu\sum_{\beta=1}^{d}\int_{Y}|\nabla(y_{i}\varepsilon_{i\beta}+\chi_{\beta ij}\varepsilon_{ij})|^{2}dy\geq0.
	\end{align*}
	If $\overline{a}_{ijk\ell}\varepsilon_{ij}\varepsilon_{k\ell}=0$
	for some  $\varepsilon=(\varepsilon_{ij})\in\mathbb{R}^{d\times d}$, then
	$y_{i}\varepsilon_{i\beta}+\chi_{\beta ij}$ must be a constant.
	Recall that $\chi_{\beta ij}(y)$ is $Y$-periodic, so this implies
	that $\varepsilon=0$. This means that there exists $\mu_{1}>0$ such that
	\eqref{ellipticity} holds.\end{proof}

\subsection{Tools and estimates}
In the last part, for the completeness of this paper, we provide some elliptic estimate where we have utilized in previous sections. The following theorem was proved in \cite[Theorem 5.7]{cakoni2014qualitative} for the scalar case. It will hold for the vector case. For completeness, we provide the theorem and its proof as follows.

\begin{theorem}\label{trace theorem}(Trace Theorem)
Let  $A=(a_{ijk\ell})_{1\leq i,j,k,\ell\leq d}$ be a four tensor satisfying the ellipticity condition \eqref{ellipticity} and $D\subset \mathbb R^d$ be a bounded domain with $C^\infty$-smooth boundary, for $d\geq 2$. The (cornormal) mapping $Tr:u\to \dfrac{\partial u}{\partial \nu _A}:=(A\nabla u)\cdot\nu$ defined in $C^\infty(\overline{D})$ can be continuously extended to a linearly continuous mapping (still denote by $Tr$) from $H^1(D,A)$ to $H^{-1/2}(\partial D)$, where $H^1(D,A)$ is the space equipped with the graph norm
\begin{equation*}
\|u \|_{H^1(D,A)}^2:=\|u\|_{H^1(D)}^2 +\|\nabla \cdot (A\nabla u)\|_{L^2(D)}^2.
\end{equation*}
\end{theorem}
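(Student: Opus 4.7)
The plan is to define the conormal trace by duality through the formal Green's identity and then verify well-definedness and continuity. For smooth $u\in C^\infty(\overline{D})$ and any $\Phi\in H^1(D)$, standard integration by parts gives
\begin{equation*}
\int_{\partial D}\bigl((A\nabla u)\cdot\nu\bigr)\cdot\Phi\,dS \;=\; \int_D (A\nabla u):\nabla\Phi\,dx \;+\; \int_D\bigl(\nabla\cdot(A\nabla u)\bigr)\cdot\Phi\,dx,
\end{equation*}
and the right-hand side continues to make sense for every $u\in H^1(D,A)$ and every $\Phi\in H^1(D)$. I would fix a bounded linear extension operator $E:H^{1/2}(\partial D)\to H^1(D)$, which exists because $\partial D$ is $C^\infty$-smooth, and then \emph{define}
\begin{equation*}
\langle Tr(u),\phi\rangle_{H^{-1/2}(\partial D)\times H^{1/2}(\partial D)} \;:=\; \int_D (A\nabla u):\nabla(E\phi)\,dx \;+\; \int_D\bigl(\nabla\cdot(A\nabla u)\bigr)\cdot E\phi\,dx
\end{equation*}
for every $u\in H^1(D,A)$ and $\phi\in H^{1/2}(\partial D)$.

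The main obstacle, and the only non-trivial step, is to show that this definition is independent of the choice of extension, equivalently that for every $v\in H^1_0(D)$,
\begin{equation*}
\int_D (A\nabla u):\nabla v\,dx \;+\; \int_D\bigl(\nabla\cdot(A\nabla u)\bigr)\cdot v\,dx \;=\; 0.
\end{equation*}
For $v\in C_c^\infty(D)$ this identity is just the definition of the distributional divergence $\nabla\cdot(A\nabla u)\in L^2(D)$ tested against $v$. Since both terms depend continuously on $v$ in the $H^1(D)$-norm (the first by Cauchy--Schwarz together with the uniform bound $\|A\|_\infty\leq \mu^{-1}$ from \eqref{ellipticity}, the second because $\nabla\cdot(A\nabla u)\in L^2(D)$), the identity extends by density of $C_c^\infty(D)$ in $H^1_0(D)$. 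This eliminates the dependence on the extension $E$ and makes $Tr(u)$ a well-defined linear functional on $H^{1/2}(\partial D)$.

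Continuity of $Tr$ then follows from a single application of Cauchy--Schwarz combined with the boundedness of $E$,
\begin{equation*}
|\langle Tr(u),\phi\rangle| \;\leq\; C\bigl(\|\nabla u\|_{L^2(D)} + \|\nabla\cdot(A\nabla u)\|_{L^2(D)}\bigr)\|E\phi\|_{H^1(D)} \;\leq\; C'\|u\|_{H^1(D,A)}\|\phi\|_{H^{1/2}(\partial D)},
\end{equation*}
which shows $Tr$ is a bounded linear operator from $H^1(D,A)$ into $H^{-1/2}(\partial D)$. Finally, the Green identity that motivated the definition shows that for $u\in C^\infty(\overline{D})$ the functional $Tr(u)$ acts on $\phi\in H^{1/2}(\partial D)$ as the $L^2(\partial D)$-pairing against the classical conormal derivative $(A\nabla u)\cdot\nu$, so $Tr$ indeed extends the classical mapping defined on $C^\infty(\overline{D})$. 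This completes the plan.
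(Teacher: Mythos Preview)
Your proof is correct and rests on the same Green's identity and duality pairing as the paper's proof, but the order of operations is reversed in a way that is worth noting. The paper first establishes the bound $|\langle (A\nabla u)\cdot\nu,\phi\rangle|\leq C\|u\|_{H^1(D,A)}\|\phi\|_{H^{1/2}(\partial D)}$ for $u\in C^\infty(\overline{D})$ and then extends $Tr$ to all of $H^1(D,A)$ by density, which tacitly invokes the fact that $C^\infty(\overline{D})$ is dense in $H^1(D,A)$. You instead define $Tr(u)$ directly for every $u\in H^1(D,A)$ via the right-hand side of Green's formula applied to a chosen extension $E\phi$, and then verify independence of the extension using the distributional identity on $H^1_0(D)$. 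Your route is marginally cleaner because it bypasses the density of smooth functions in the graph-norm space; the paper's route is more classical and makes the ``extension of a bounded operator'' structure explicit. Both arrive at the same operator with the same bound.
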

\begin{proof}
Let $\varphi\in C^\infty (\overline{D})$ be a test function and $u\in C^\infty (\overline{D})$. The integration by parts formula gives 
\begin{align*}
\int _{\partial D}(A\nabla u\cdot \nu) \cdot\varphi dS=\int _D(A\nabla u):\nabla \varphi dx+\int _D\nabla \cdot (A\nabla u)\cdot \varphi dx.
\end{align*}
By the standard density arguments, the above equation holds for $\varphi \in H^1(D)$ so that 
\begin{align}\label{some inequality}
\left|\int _{\partial D}(A\nabla u\cdot \nu) \cdot\varphi dS\right|\leq C \|u\|_{H^1(D,A)}\|\varphi \|_{H^1(D)}, \text{ for any }\varphi \in H^1(D),\ u\in C^\infty (\overline{D}),
\end{align}
for some constant $C>0$ independent of $\varphi$ and $u$. Let $g\in H^{1/2}(\partial D)$, by using the trace theorem, then there exists a function $\varphi \in H^1(D)$ such that $\gamma _{\partial D}\varphi =f$, where $\gamma _{\partial D}$ stands for the trace operator. Continuing the inequality \eqref{some inequality} and the trace theorem, 
\begin{align*}
\left|\int _{\partial D}(A\nabla u\cdot \nu) \cdot f dS\right|\leq C \|u\|_{H^1(D,A)}\|f \|_{H^{1/2}(\partial D)}, \text{ for any }f \in H^{1/2}(\partial D),\ u\in C^\infty (\overline{D}).
\end{align*}
Hence, the mapping 
\begin{equation}\notag
f \to \int_{\partial D}(A\nabla u\cdot \nu)\cdot f dS,\text{ for any }f\in H^{1/2}(\partial D)
\end{equation}
defines a continuous linear operator and from the duality argument,
\begin{align}\notag
\|(A\nabla)\cdot \nu\|_{H^{-1/2}(\partial D)}\leq C\|u \|_{H^1(D,A)}.
\end{align}
Therefore, the linear mapping $\gamma: u \to (A\nabla u)\cdot \nu $ is defined on $C^\infty(\overline{D})$ is continuous under the norm $H^1(D,A)$. Thus, the assertion follows from the density arguments.
\end{proof}
Let $C=(C_{ijk\ell})$ be an anisotropic elastic four tensor and $C_0$ be a constant isotropic elastic tensor defined by \eqref{constant elastic tensor}, which satisfy all the conditions given in Section \ref{Section 1}. Next, we provide the stability estimate for the following transmission problem. The scalar case was demonstrated in \cite[Section 5]{cakoni2014qualitative} and here we generalize the result to a system version.

\begin{theorem}\label{estimate on transmission problem}
Let $D\subset \mathbb R^d$ be a bounded $C^\infty $-smooth domain. Given $f\in H^{1/2}(\partial D)$ and $g\in H^{-1/2}(\partial D)$. Let $u\in H^1(D)$ and $v\in H^1_{loc}(\mathbb R^d\setminus \overline{D})$ be the solutions of the following transmission problem 
\begin{align}
\begin{cases}\label{H^1 estimates for transmission problem}
\nabla \cdot \left(C\nabla u\right)+\omega ^2 \rho u=0&\text{ in }D,\\
\Delta ^*v+\omega ^2 v=0&\text{ in } \mathbb R^d \setminus \overline{D},\\
u-v=f&\text{ on }\partial D,\\
T_\nu u-(C\nabla u)\cdot \nu=g&\text{ on }\partial D,
\end{cases}
\end{align}
where $T_\nu $ is the boundary traction operator given by  \eqref{boundary traction operator}, 
$\omega \in \mathbb R$ is not an eigenvalue of the transmission problem \eqref{H^1 estimates for transmission problem} and $v$ satisfies the Kupradze radiation condition \eqref{Kupradze radiation condition}. Then for any ball $B_R$ with $D\subset B_R$, there exists a constant $C_R>0$ such that 
\begin{align}\label{stability estimate}
\|u\|_{H^1(D)}+\|v\|_{H^1(B_R \setminus \overline{D})}\leq C_R\left\{\|f\|_{H^{1/2}(\partial D)}+\|g\|_{H^{-1/2}(\partial D)}\right\}.
\end{align}
\end{theorem}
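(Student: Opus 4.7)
\emph{Proof sketch.} The plan is to reduce the exterior problem to a bounded domain via the DN map, recast the transmission problem as a single variational equation on $B_R$, and then invoke the Fredholm alternative together with the non-resonance hypothesis on $\omega$. First, the DN map $\Lambda$ in (\ref{DN map}) encodes the Kupradze radiation condition as the non-local boundary condition $T_\nu v=\Lambda v$ on $\partial B_R$. Next I would eliminate the Dirichlet jump $u-v=f$ by a standard lifting: choose $F\in H^1(B_R\setminus\overline{D})$ with $F|_{\partial D}=f$, $F\equiv 0$ near $\partial B_R$, and $\|F\|_{H^1(B_R\setminus\overline{D})}\leq C\|f\|_{H^{1/2}(\partial D)}$. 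Defining $W$ by $W=u$ in $D$ and $W=v+F$ in $B_R\setminus\overline{D}$ yields $W\in H^1(B_R)$. Integration by parts in each subdomain, together with the DN condition on $\partial B_R$, produces a weak formulation
\begin{equation*}
\mathcal{A}(W,\phi)-\omega^2\mathcal{B}(W,\phi)=\ell(\phi), \qquad \forall\,\phi\in H^1(B_R),
\end{equation*}
where $\mathcal{A}$ collects the elastic energies of $W$ in $D$ and $B_R\setminus\overline{D}$ together with $-\langle\Lambda W,\phi\rangle_{\partial B_R}$, $\mathcal{B}$ is the mass form with coefficient $\rho$, and $\ell$ is bounded by $C(\|f\|_{H^{1/2}(\partial D)}+\|g\|_{H^{-1/2}(\partial D)})\|\phi\|_{H^1(B_R)}$ via the trace theorem (Theorem \ref{trace theorem}) and the bound on $F$.

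The principal step is a G\aa rding inequality for $\mathcal{A}-\omega^2\mathcal{B}$. The major and minor symmetries of $C$ and $C^{\iss 0}$ imply $\int C\nabla W:\nabla\overline{W}\,dx=\int C\,e(W):e(\overline{W})\,dx$ where $e(W)$ is the symmetric gradient, and strong convexity (\ref{strong convexity}) combined with Korn's inequality then yields a lower bound of the form $c\|W\|_{H^1(B_R)}^2-C'\|W\|_{L^2(B_R)}^2$ from the bulk terms. The boundary term $-\langle\Lambda W,W\rangle_{\partial B_R}$ decomposes, via the asymptotics of the Fourier multipliers $a_n,b_n,c_n,d_n$ in (\ref{DtN_Neumann}) as $n\to\infty$, into a self-adjoint part with the correct sign plus a compact perturbation on $H^{1/2}(\partial B_R)$, while $\omega^2\mathcal{B}$ is compact on $H^1(B_R)$ by the Rellich-Kondrachov theorem. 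Consequently, $\mathcal{A}-\omega^2\mathcal{B}$ induces a Fredholm operator $\mathcal{T}:H^1(B_R)\to H^1(B_R)^*$ of index zero.

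Injectivity of $\mathcal{T}$ is precisely the non-resonance assumption on $\omega$, so $\mathcal{T}$ is an isomorphism, and the open mapping theorem yields $\|W\|_{H^1(B_R)}\leq C\|\ell\|_{H^1(B_R)^*}$. Unwinding the definition of $W$ and absorbing the lift via $\|F\|_{H^1}\leq C\|f\|_{H^{1/2}(\partial D)}$ gives (\ref{stability estimate}). The main obstacle is the coercive-plus-compact splitting of the DN boundary form: one has to subtract an explicit finite-rank low-frequency portion of the series (\ref{DtN_Dirichlet})--(\ref{DtN_Neumann}) and verify that the remaining high-frequency tail has the desired sign up to a compact correction, which reduces to the classical asymptotic behaviour of the spherical Hankel ratios $\gamma_s,\gamma_p$ as $n\to\infty$ found in \cite{bao2018direct,gachter2003dirichlet}.
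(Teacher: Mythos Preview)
Your proposal is correct and follows essentially the same route as the paper: reduce to $B_R$ via the DN map $\Lambda$, lift the Dirichlet jump to obtain a single unknown in $H^1(B_R)$, split the resulting sesquilinear form into a coercive part plus a compact perturbation (the paper writes this as $b_1+b_2$ and cites \cite{bao2018direct} for the decomposition $\Lambda=\Lambda_1+\Lambda_2$, you phrase it as a G\aa rding inequality), and then apply the Fredholm alternative together with the non-resonance hypothesis. The only cosmetic differences are that the paper lifts with the specific Navier solution $v_f$ of \eqref{v_f} rather than a generic extension, and that you make the role of Korn's inequality explicit where the paper leaves it implicit in the coercivity of $b_1$.
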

\begin{proof}
Firstly, by using similar arguments in \cite[Section 2]{bao2018direct} and \cite[Section 5]{cakoni2014qualitative}, the elastic scattering problem \eqref{H^1 estimates for transmission problem} is equivalent to the following transmission problem: Let $u\in H^1(D)$ and $v\in H^1(B_R\setminus\overline{D})$ be the solutions of 
\begin{align}
\begin{cases}\label{bounded transmission problem}
\nabla \cdot \left(C\nabla u\right)+\omega ^2 \rho u=0 &\text{ in }D,\\
\Delta ^*v+\omega ^2 v=0 &\text{ in } B_R \setminus \overline{D},\\
u-v=f &\text{ on }\partial D,\\
T_\nu u-(C\nabla u)\cdot \nu=g &\text{ on }\partial D,\\
T_\nu v = \Lambda v &\text{ on }\partial B_R,
\end{cases}
\end{align}
where $\Lambda$ is the DN map defined by \eqref{DN map} on $\partial B_R$. Furthermore, by using \cite[Lemma 2.8]{bao2018direct}, the DN map $\Lambda$ is a bounded operator and $\Lambda$ can decomposed into  $\Lambda=\Lambda_1+\Lambda_2$, where $-\Lambda_1$ is a positive operator and $\Lambda_2$ is a compact operator from $H^{1/2}(\partial B_R)$ to $H^{-1/2}(\partial B_R)$. 

Next, let $v_f\in H^1(B_R\setminus\overline{D})$ be the unique solution of the Navier's equation in the exterior domain 
\begin{equation}
\begin{cases}\label{v_f}
\Delta^*v_f+\omega^2v_f=0& \text{ in }B_R \setminus \overline{D},\\
v_f=f &\text{ on }\partial D,\\
v_f =0 &\text{ on }\partial B_R.
\end{cases}
\end{equation}
By straight forward calculation, it is not hard to see that the variational formula of \eqref{bounded transmission problem} can be written as follows: Find a function $w\in H^1(B_R)$ such that  
\begin{align}\label{estimate 7}
&\notag\int_D \left((C\nabla w):\nabla \phi -\omega ^2\rho w\cdot \phi\right)\,dx+\int _{B_R\setminus\overline{D}}\left((C_0\nabla w):\nabla \phi-\omega^2 w\cdot \phi \right)\,dx\\
&\notag-\int _{\partial B_R}\phi \cdot T_\nu w\, dS+\int _{\partial B_R} \phi \cdot T_\nu v_f\,dS\\
=&\int_{\partial D}g\cdot \phi\, dS+\int _{B_R\setminus \overline{D}}\left((C_0\nabla v_f):\nabla \phi-\omega ^2v_f\cdot \phi \right)\,dx,
\end{align}
for any test function $\phi \in H^1(B_R)$, where $C_0$ is a constant elastic tensor defined by \eqref{constant elastic tensor}. By using the integration by parts, one can easily see that $u=w|_D$ and $v=w|_{B_R\setminus\overline{D}}-v_f$ satisfy \eqref{bounded transmission problem}. 

Now, let us consider two bilinear forms 
\begin{align*}
b_1(\psi,\phi):=&\notag\int_D \left((C\nabla \psi):\nabla \phi + \psi\cdot \phi\right)\,dx+\int _{B_R\setminus\overline{D}}\left((C_0\nabla w):\nabla \phi+w\cdot \phi \right)\,dx\\
&-\int_{\partial B_R}\phi \cdot (\Lambda_1\psi)\,dS,\\
b_2(\psi, \phi ):=&-\int_D\left(\omega^2\rho +1\right)\phi\cdot\psi dx-\int_{B_R\setminus\overline{D}}(\omega^2+1)\phi \cdot \psi dx\\
&-\int_{\partial B_R}\phi \cdot (\Lambda_2\psi)\, dS, \quad\text{ for all }\phi,\psi \in H^1(B_R),
\end{align*}
and 
\begin{align*}
F(\phi):=\int_{\partial D}g\cdot \phi\, dS-\int_{\partial B_R}\phi\cdot T_\nu v_f\,dS+\int _{B_R\setminus \overline{D}}\left((C_0\nabla v_f):\nabla \phi-\omega ^2v_f\cdot \phi \right)\,dx.
\end{align*}
Then we can rewrite the problem \eqref{estimate 7} as finding a function $w\in H^1(B_R)$ such that 
\begin{align*}
b_1(w,\phi)+b_2(w,\phi)=F(\phi),\quad\text{ for any }\phi \in H^1(B_R).
\end{align*}
Since $-\Lambda_1$ is a positive operator, one can conclude that $b_1(\cdot,\cdot)$ is strictly coercive. Therefore, from the Lax-Milgram theorem, one can see that the operator $A:H^1(B_R)\to H^1(B_R)$ defined by $b_1(w,\phi)=(Aw,\phi)_{H^1(B_R)}$ is invertible and has a bounded inverse.  On the other hand, since $\Lambda_2$ is a compact operator from $H^{1/2}(\partial B_R)\to H^{-1/2}(\partial B_R)$ and $H^{1}(B_R)\to L^2(B_R)$ is a compact embedding, then it is not hard to see that the operator $B:H^1(B_R)\to H^1(B_R)$ defined by $b_2(w,\phi)=(Bw,\phi)_{H^1(B_R)}$ is compact. Hence, by using \cite[Theorem 5.16]{cakoni2014qualitative}, one can derive that the existence of the transmission problem \eqref{bounded transmission problem} from the uniqueness of \eqref{bounded transmission problem}  and the stability estimate \eqref{stability estimate} holds automatically.
\end{proof}

\bibliographystyle{plain}
\bibliography{ref}

\end{document}